\numberwithin{equation}{section}
\newcommand{\abs}[1]{\lvert#1\rvert}
\newcommandx{\unsure}[2][1=]{\todo[linecolor=red,backgroundcolor=red!25,bordercolor=red,#1]{#2}}
\newcommandx{\change}[2][1=]{\todo[linecolor=blue,backgroundcolor=blue!25,bordercolor=blue,#1]{#2}}
\newcommandx{\info}[2][1=]{\todo[linecolor=OliveGreen,backgroundcolor=OliveGreen!25,bordercolor=OliveGreen,#1]{#2}}
\newcommandx{\improvement}[2][1=]{\todo[linecolor=black,backgroundcolor=black!25,bordercolor=black,#1]{#2}}
\newcommandx{\thiswillnotshow}[2][1=]{\todo[disable,#1]{#2}}
\crefname{prop}{Proposition}{Propositions}
\crefname{equation}{}{}
\newtheorem{theorem}{Theorem}[section]
\newtheorem{lemma}[theorem]{Lemma}
\newtheorem{proposition}[theorem]{Proposition}
\theoremstyle{definition}
\newtheorem{definition}[theorem]{Definition}
\newtheorem{corollary}[theorem]{Corollary}
\newtheorem{remark}[theorem]{Remark}
\crefname{ass}{Assumption}{Assumptions}
\crefname{dfn}{Definition}{Definitions}
\crefname{cor}{Corollary}{Corollaries}
\crefname{enumi}{Point}{Points}
\DeclareMathOperator{\N}{\mathbb{N}}
\DeclareMathOperator{\Z}{\mathbb{Z}}
\DeclareMathOperator{\R}{\mathbb{R}}
\DeclareMathOperator{\C}{\mathbb{C}}
\renewcommand{\i}{\mathbf{i}}
\renewcommand{\tilde}{\widetilde}
\renewcommand{\hat}{\widehat}
\renewcommand{\bar}[1]{\overline{#1}}
\newcommand{\inv}{^{-1}}
\DeclareMathOperator{\diag}{diag}
\DeclareMathOperator{\BO}{\mathcal{O}}
\DeclareMathOperator{\exactC}{\mathbf{C}^\alpha}
\DeclareMathOperator{\capmat}{\mathcal{C}^\alpha}
\newcommand{\capmati}[1]{\mathcal{C}^\alpha_{#1}}
\DeclareMathOperator{\gencapmat}{\mathcal{C}^\alpha_G}
\newcommand{\pri}{^\prime}
\renewcommand{\epsilon}{\varepsilon}
\newcommand{\dd}{\ensuremath \,\mathrm{d}}
\newcommand{\zak}[1]{\varphi^\text{zak}_{#1}}
\newcommand{\per}{{\rm per}}
\DeclareMathOperator{\crystal}{\mathfrak{C}}
\DeclareMathOperator{\iL}{{\mathsf{L}}}
\DeclareMathOperator{\iR}{{\mathsf{R}}}
\DeclareMathOperator{\iLR}{{\mathsf{L},\mathsf{R}}}
\def\grd@save@target#1{%
  \def\grd@target{#1}}
\def\grd@save@start#1{%
  \def\grd@start{#1}}
\tikzset{
  grid with coordinates/.style={
    to path={%
      \pgfextra{%
        \edef\grd@@target{(\tikztotarget)}%
        \tikz@scan@one@point\grd@save@target\grd@@target\relax
        \edef\grd@@start{(\tikztostart)}%
        \tikz@scan@one@point\grd@save@start\grd@@start\relax
        \draw[minor help lines] (\tikztostart) grid (\tikztotarget);
        \draw[major help lines] (\tikztostart) grid (\tikztotarget);
        \grd@start
        \pgfmathsetmacro{\grd@xa}{\the\pgf@x/1cm}
        \pgfmathsetmacro{\grd@ya}{\the\pgf@y/1cm}
        \grd@target
        \pgfmathsetmacro{\grd@xb}{\the\pgf@x/1cm}
        \pgfmathsetmacro{\grd@yb}{\the\pgf@y/1cm}
        \pgfmathsetmacro{\grd@xc}{\grd@xa + \pgfkeysvalueof{/tikz/grid with coordinates/major step}}
        \pgfmathsetmacro{\grd@yc}{\grd@ya + \pgfkeysvalueof{/tikz/grid with coordinates/major step}}
        \foreach \x in {\grd@xa,\grd@xc,...,\grd@xb}
        \node[anchor=north] at (\x,\grd@ya) {\pgfmathprintnumber{\x}};
        \foreach \y in {\grd@ya,\grd@yc,...,\grd@yb}
        \node[anchor=east] at (\grd@xa,\y) {\pgfmathprintnumber{\y}};
      }
    }
  },
  minor help lines/.style={
    help lines,
    step=\pgfkeysvalueof{/tikz/grid with coordinates/minor step}
  },
  major help lines/.style={
    help lines,
    line width=\pgfkeysvalueof{/tikz/grid with coordinates/major line width},
    step=\pgfkeysvalueof{/tikz/grid with coordinates/major step}
  },
  grid with coordinates/.cd,
  minor step/.initial=.2,
  major step/.initial=1,
  major line width/.initial=2pt,
}
\tikzset{
    style1/.style={
      matrix of math nodes,
      every node/.append style={text width=#1,align=center,minimum height=5ex},
      nodes in empty cells,
      left delimiter=(,
      right delimiter=),
      },
    style2/.style={
      matrix of math nodes,
      every node/.append style={text width=#1,align=center,minimum height=5ex},
      nodes in empty cells,
      left delimiter=\lbrace,
      right delimiter=\rbrace,
      }
}
\newdimen\XCoord
\newdimen\YCoord
\newcommand*{\ExtractCoordinate}[1]{\path[reset cm] (#1);
\pgfgetlastxy{\XCoord}{\YCoord}; 
\pgfmathsetmacro{\XCoord}{\XCoord*1pt/1cm};
\pgfmathsetmacro{\YCoord}{\YCoord*1pt/1cm};}
\begin{document}

\title{Edge modes in subwavelength resonators in one dimension}

% author one information
\author[H. Ammari]{Habib Ammari}
\address{\parbox{\linewidth}{Habib Ammari\\
ETH Zurich, Department of Mathematics, Rämistrasse 101, 8092 Zurich, Switzerland}}
%\curraddr{}
%\email{f.l@ethz.ch}
%\thanks{ABC}
% Author 2
\author[S. Barandun]{Silvio Barandun}
\address{\parbox{\linewidth}{Silvio Barandun\\
ETH Zurich, Department of Mathematics, Rämistrasse 101, 8092 Zurich, Switzerland}}
%\curraddr{}
\email{{\includegraphics[height=5pt]{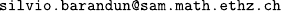}}}

% Author 3
\author[J. Cao]{Jinghao Cao}
\address{\parbox{\linewidth}{Jinghao Cao\\
ETH Zurich, Department of Mathematics, Rämistrasse 101, 8092 Zurich, Switzerland}}
%\curraddr{}
%\email{f.l@ethz.ch}

% Author 4
\author[F. Feppon]{Florian Feppon}
\address{\parbox{\linewidth}{Florian Feppon\\
NUMA research Unit, KU Leuven, 3001 Leuven, Belgium}}
%\curraddr{}
%\email{f.l@ethz.ch}

\begin{abstract}
    We present the mathematical theory of one-dimensional infinitely periodic chains of subwavelength resonators. We analyse both Hermitian and non-Hermitian systems. Subwavelength resonances and associated modes can be accurately predicted by a finite dimensional eigenvalue problem involving a capacitance matrix. We are able to compute the Hermitian and non-Hermitian Zak phases, showing that the former is quantised and the latter is not. Furthermore, we show the existence of localised edge modes arising from defects in the periodicity in both the Hermitian and non-Hermitian cases. In the non-Hermitian case, we provide a complete characterisation of the edge modes.
\end{abstract}

\date{}

\maketitle
%\vspace*{-10mm}
\medskip  
\noindent \textbf{Keywords.} Subwavelength resonances, one-dimensional periodic chains of subwavelength resonators, non-Hermitian topological systems, topologically protected edge modes.

\medskip

\noindent \textbf{AMS Subject classifications.} 35B34, 35P25, 35J05, 35C20, 46T25, 78A40.\\
%\noindent \textbf{AMS Subject classifications (NEW SUGGESTION).} 34E05, 45M05, 78A40
% 34E05 Asymptotic expansions of solutions to ordinary differential equations 
%45M05   	Asymptotics of solutions to integral equations
% 78A40 Waves and radiation in optics and electromagnetic theory 
\par
%35B34 resonances in the context of PDEs
%35B40   	Asymptotic behaviour of solutions to PDEs
%45M05   	Asymptotics of solutions to integral equations
%35J05   	Laplace operator, Helmholtz equation (reduced wave equation), Poisson equation %%%%%%%%%%%%%%%%%%%%%%%%%%%%%%%%%%%%%%%%%%%%%%%%%%%%%%%%%%%%%%%%%%%%%%%%%%%%%%%%%%%%%%%%%%%%%%%%

\vfill
\hrule
\tableofcontents
\vspace{-0.5cm}
\hrule
\vfill
\newpage

\section{Introduction}
In the past decade, controlling and manipulating waves via interaction with objects at subwavelength scales has gained a lot of attention in both photonics and phononics \cite{lemoult.fink.ea2011,lemoult.kaina.ea2016,yves.fleury.ea2017}. One way to achieve subwavelength interactions is to use high-contrast metamaterials, that are media constituted by the insertion of a set of highly contrasted resonators into a background medium. Here, subwavelength means that the size of such resonators is much smaller than the operating wavelength. A typical example in acoustics of such high-contrast resonators are air bubbles in water, which give rise to Minneart resonances \cite{ammari.fitzpatrick.ea2018}. Examples in electromagnetics include high-contrast dielectric particles and plasmonic particles \cite{ammari.li.ea2023Mathematical,ammari.millien.ea2017Mathematical}.

High-contrast subwavelength resonators have been extensively studied in the three-di\-men\-sion\-al case \cite{ammari.davies2020,  ammari.davies.ea2022a, ammari.fitzpatrick.ea2018, devaud.hocquet.ea2008, feppon.ammari2022}. Recently, an increased interest has been dedicated to topological properties of one-dimensional resonators. Studies include one-dimensional infinite periodic media with continuous material parameters \cite{lin.zhang2022}, simplified Su-Schrieffer-Heeger (SSH) models \cite{craster.davies2022} and various physical experiments \cite{thomas.hughes2004, wang2010}. A rigorous mathematical analysis of the finite one-dimensional case was recently presented \cite{feppon.cheng.ea2022}. The present work completes this analysis by considering the one-dimensional periodic case. Since the interactions between the subwavelength resonators only imply the nearest neighbors in one-dimension, it also connects the field of high-contrast metamaterials to condensed-matter physics.  

In classical wave systems, sources of amplification and dissipation of energy can be modelled by non-real material parameters making the underlying system non-Hermitian, meaning that the left and right eigenmodes are distinct. Our work considers both Hermitian and non-Hermitian systems of subwavelength resonators. We look at these two cases separately as they present deep underlying differences. A particular case of non-Hermitian systems are those with parity-time (PT-) symmetry. Recently, non-Hermitian subwavelength resonators have been studied in three dimensions \cite{ammari.hiltunen2020}, but to the best of our knowledge no literature exists on the one-dimensional case, which is of interest not only for the study of one-dimensional metamaterials but also of quantum systems as the interactions in both cases are short-range.

In this work, we are able to show that, similarly to the three-dimensional case \cite{ammari.davies.ea2020}, also in the one-dimensional case it is possible to design subwavelength structures where certain frequencies cannot propagate and are trapped near an edge. This typically happens by introducing a defect in the geometry --- in the Hermitian case --- or in the material parameters --- in the non-Hermitian case. Generally these localised modes are sensitive with respect to small perturbations. In order to manufacture structures presenting the said characteristics, stability with respect to perturbations is required. We take inspiration from quantum mechanics where so-called topological insulators have been extensively studied \cite{drouot2021, drouot.fefferman.ea2020a,fefferman.lee-thorp.ea2018, fefferman.lee-thorp.ea2014a}. The underlying principle of these structures is the existence of a topological invariant that captures the propagation properties of the system. In the present setup, the correct topological invariant is the \emph{Zak phase}. The combination of two structures having different invariants will give rise to modes that are confined at the interface of the structure and that are stable with respect to imperfections. These modes are known as topologically protected edge modes. We first compute the Zak phase for both the Hermitian and non-Hermitian cases and prove that the non-Hermitian one is not quantised. Then we show the existence of the said edge modes and demonstrate their robustness. Moreover, in the non-Hermitian case, we also provide a full characterisation of these modes.

The paper is organized as follows. In \Cref{sec: Problem statement}, we present the mathematical setup of the problem. In \Cref{sec: quasiperiodic Dirichlet to Neumann}, we introduce the Dirichlet-to-Neumann map and solve the exterior problem. \Cref{sec: subwavelength resonances} is dedicated to deriving an asymptotic approximation of the subwavelength resonances and their associated modes. We show that a generalised eigenvalue problem involving the capacitance matrix solves this. The exact tridiagonal structure of the capacitance matrix allows us to study the topological properties of one-dimensional systems of subwavelength resonators without dilute regime assumptions. In \Cref{sec: Hermitian}, we focus on the Hermitian case and show numerically the existence of edge modes in the presence of geometrical defects. Ultimately in \Cref{sec: non-Hermitian}, we first explicitly compute the Zak phase and then prove the existence of an edge mode in the case of defects in the periodicity of the material parameters. The robustness of the edge modes in both the Hermitian and non-Hermitian cases is illustrated numerically.

\section{Problem statement and preliminaries}\label{sec: Problem statement}
\subsection{Problem formulation}
We consider a one-dimensional system constituted of $N$ periodically repeated disjoint subwavelength resonators $D_i\coloneqq (x_i^{\iL},x_i^{\iR})$, where $(x_i^{\iLR})_{1\leq i\leq N} \subset \R$ are the $2N$ extremities satisfying $x_i^{\iL} < x_i^{\iR} <  x_{i+1}^{\iL}$ for any $0\leq i\leq N-1$. We assume without loss of generality that $x_1^{\iL}=0$.
We denote by $(x_i^{\iLR})_{i\in\N}$ the infinite sequence obtained by setting
\begin{align*}
x_{i+N}^{\iLR}:=x_i^{\iLR}+L,
\end{align*}
for some $L>x_N^{\iR}-x_1^{\iL}$.
Furthermore,  we let $D^n = \bigcup_{i=1}^{N}D_i + nL$ so that $D^n = D + nL$ is just the repetition of $D^0\eqqcolon D$. We denote the entire structure by $\crystal\coloneqq \bigcup_{n\in\Z}D^n$.
We also denote by  $\ell_i = x_i^{\iR} - x_i^{\iL}$ the length of the $i$-th resonators,  and by $s_i= x_{i+1}^{\iL} -x_i^{\iR}$ the spacing between the $i$-th and $(i+1)$-th resonator.
With our convention, the spacing $s_{N}$ is the distance that separates the last resonator of a unit cell from the first of the next one:
\begin{align*}
  s_{N}:=x_{N+1}^{\iL}-x_N^{\iR}=L-x_N^{\iR}+x_1^{\iL}.
\end{align*}
One notices that $L = \sum_{i=1}^{N} \ell_i + s_{i}$ is the size of the unit cell, which we denote by $Y\coloneqq (0,L)$. The system is illustrated on \Cref{fig:setting}.

    \begin{figure}[h]
        \centering
        \begin{adjustbox}{width=\textwidth}
        \begin{tikzpicture}
            \pgfmathsetseed{3}
            \coordinate (0) at (0,0);
            %\draw node[above] at (0,0.05) {0};
            %\fill (0,0) circle  (2pt);
            \foreach \i in {1,2,...,7}{
                \pgfmathsetmacro{\r}{max(1.5*rnd,0.7)}
                \pgfmathsetmacro{\im}{\i-1}
                \coordinate (\i) at ($(\im)+(\r,0)$);
           };
            \foreach \i in {0,2,...,5}{
                \pgfmathsetmacro{\im}{\i}
                \pgfmathsetmacro{\ip}{\i+1}
                \pgfmathsetmacro{\lin}{int(\i/2+1)}
                \draw[|-|] (\im) -- (\ip) ; 
                \draw ($0.5*(\im)+0.5*(\ip)$) node[above] {$\ell_{\lin}$};
                \draw node[below,yshift=-0.2] at (\im) {\small $x_{\lin}^{\iL}$}; 
                \draw node[below,yshift=-0.2] at (\ip) {\small $x_{\lin}^{\iR}$}; 
            };
            \foreach \i in {1,3,...,4}{
                \pgfmathsetmacro{\im}{\i}
                \pgfmathsetmacro{\ip}{\i+1}
                \pgfmathsetmacro{\lin}{int((\i-1)/2+1)}
                \pgfmathsetmacro{\linp}{int((\i-1)/2+2)}
                \draw[dotted] (\im) -- ($(\im)+(0,0.7)$);
                \draw[dotted] (\ip) -- ($(\ip)+(0,0.7)$);
                \draw[|-|,dashed] ($(\im)+(0,0.6)$) -- ($(\ip)+(0,0.6)$);
                \draw node[above] at ($0.5*(\im)+0.5*(\ip)+(0,0.6)$) {$s_{\lin}$};
            }       
            \coordinate (8) at ($(5)+(2,0)$);
            \coordinate (9) at ($(5)+(3,0)$);
            \coordinate (C) at ($0.5*(5)+0.5*(8)$);
            \draw[|-|] (8) -- (9) ; 
            \draw node[below,yshift=-0.2] at (8) {\small $x_{N-1}^{\iL}$}; 
            \draw node[below,yshift=-0.2] at (9) {\small $x_{N-1}^{\iR}$}; 
            \draw ($0.5*(8)+0.5*(9)$) node[above] {$\ell_{N-1}$};
            \coordinate (10) at ($(9)+(1,0)$);
            \coordinate (11) at ($(9)+(2.3,0)$);
            \draw[|-|] (10) -- (11) ; 
            \draw node[below,yshift=-0.2] at (10) {\small $x_{N}^{\iL}$}; 
            \draw node[below,yshift=-0.2] at (11) {\small $x_{N}^{\iR}$}; 
            \draw ($0.5*(10)+0.5*(11)$) node[above] {$\ell_{N}$};
            \draw[dotted] (9) -- ($(9)+(0,0.7)$);
            \draw[dotted] (10) -- ($(10)+(0,0.7)$);
            \draw[|-|,dashed] ($(9)+(0,0.6)$) -- ($(10)+(0,0.6)$);
            \draw node[above] at ($0.5*(9)+0.5*(10)+(0,0.6)$) {$s_{N-1}$};
            \foreach \i in {-2,-1,0,1,2} {
                \pgfmathsetmacro\c{0.03}
                \coordinate (a) at ($(C)+(\i*1/3,0)$);
                \fill ($(a)-(\c,\c)$)  rectangle ($(a)+(\c,\c)$);};
            \draw[color=black,draw opacity=0.1] ($(0)-(1.5,0)$) -- (5);
            \draw[color=black,draw opacity=0.1,dashed] ($(0)-(1.5,0)$) --      ($(0)-(2.2,0)$);
            \draw[color=black,draw opacity=0.1] (8) -- ($(11)+(1.5,0)$);
            %\draw node[above] at ($(11)+(1.5,0.05)$) {$L$};
            %\fill ($(11)+(1.5,0)$) circle  (2pt);
            %\draw[|-|] (3,0) -- (4,0); 

            \coordinate (12) at ($(11)+(1.5,0)$);
            \ExtractCoordinate{12}
            %\draw node[above] at (12)    {\pgfmathparse{\XCoord}\pgfmathprintnumber{\pgfmathresult}}; 
            \draw[color=black,draw opacity=0.1] ($(11)+(1.5,0)$) -- ($(11)+(3,0)$);
            \draw[|-] ($(12)+(1,0)$) -- ($(12)+(1.5,0)$);
            \draw[-,dashed] ($(12)+(1.5,0)$) -- ($(12)+(2,0)$);
            \draw node[below] at ($(12)+(1,0)$) {$x_{N+1}^{\iL}=x_1^{\iL}+L$};
            \draw[dotted] (11) -- ($(11)+(0,0.7)$);
            \draw[dotted] ($(12)+(1,0)$) -- ($(12)+(1,0)+(0,0.7)$);
            \draw[|-|,dashed] ($(11)+(0,0.6)$) -- ($(12)+(1,0)+(0,0.6)$);
                \draw node[above] at ($0.5*(11)+0.5*(12)+(0.5,0.6)$) {$s_{N}$};

            \tikzset{pattern/.pic={
                \pgfmathsetseed{3}
                \coordinate (0) at (0,0);
                \fill (-1,0) circle  (2pt);
                \foreach \i in {1,2,...,5}{
                    \pgfmathsetmacro{\r}{max(1.5*rnd,0.7)}
                    \pgfmathsetmacro{\im}{\i-1}
                    \coordinate (\i) at ($(\im)+(\r,0)$);
               };
                \foreach \i in {0,2,...,5}{
                    \pgfmathsetmacro{\im}{\i}
                    \pgfmathsetmacro{\ip}{\i+1}
                    \pgfmathsetmacro{\lin}{int(\i/2+1)}
                    \draw[|-|] (\im) -- (\ip) ; 
                };
                \coordinate (8) at ($(5)+(2,0)$);
                \coordinate (9) at ($(5)+(3,0)$);
                \coordinate (C) at ($0.5*(5)+0.5*(8)$);
                \draw[|-|] (8) -- (9) ; 
                \coordinate (10) at ($(9)+(1,0)$);
                \coordinate (11) at ($(9)+(2.3,0)$);
                \draw[|-|] (10) -- (11) ; 
                \foreach \i in {-2,-1,0,1,2} {
                    \pgfmathsetmacro\c{0.03}
                    \coordinate (a) at ($(C)+(\i*1/3,0)$);
                    \fill ($(a)-(\c,\c)$)  rectangle ($(a)+(\c,\c)$);};
                \fill ($(11)+(1.5,0)$) circle  (2pt);
                \draw[color=black,draw opacity=0.1] ($(0)-(1,0)$) -- (5);
                \draw[color=black,draw opacity=0.1] (8) -- ($(11)+(1.5,0)$);
            }
            }
           % \pic[scale=0.2] (pattern) at (-20.72,1) {pattern};
           % \pic[scale=0.2] (pattern) at (-10.36,1) {pattern};
            \foreach \i in {0,1,...,5}{
                \pic[scale=0.2] (pattern) at ($(-1.1+\i*\XCoord*0.2+\i*0.2,-2)$) {pattern};
                \pgfmathsetmacro{\p}{int(\i-2)}
                \draw node[below] at ($(-1.1+\i*\XCoord*0.2+\i*0.2,-2)$) {$\p L$};
            };
            \draw[dotted] (-2,-2)-- (-1.3,-2);
            \draw[dotted] ($(-1.1+6*\XCoord*0.2+5*0.2,-2)$)-- (14,-2);
            \draw[postaction={decorate,decoration={markings,mark=at position 0.5 with
            {\arrow{>}}}},dotted] ($(-1.1+2*\XCoord*0.2+0.4,-2)$) to [controls=+(90:1) and +(-70:1)] (0,0);
            \draw[postaction={decorate,decoration={markings,mark=at position 0.5 with
            {\arrow{>}}}},dotted] ($(-1.1+3*\XCoord*0.2+2*0.2+0.2,-2)$) to      [controls=+(90:1) and +(-30:1)] ($(12)+(1,0)$);
            %\pic[scale=0.2] (pattern) at (1,-2) {pattern};
            %\pic[scale=0.2] (pattern) at (5.1,-2) {pattern};
            %\pic[scale=0.2] (pattern) at (8.2,-2) {pattern};
            %\pic[scale=0.2] (pattern) at (11.3,-2) {pattern};
           % \pic[scale=0.2] (pattern) at (20.72,1) {pattern};
        \end{tikzpicture}
        \end{adjustbox}
        \caption{An infinite chain  of $N$ subwavelength resonators, with lengths
        $(\ell_i)_{1<i\leq N}$ and spacings $(s_{i})_{1\leq i\leq N-1}$,
        periodically repeated with period $L$.}
        \label{fig:setting}
    \end{figure}
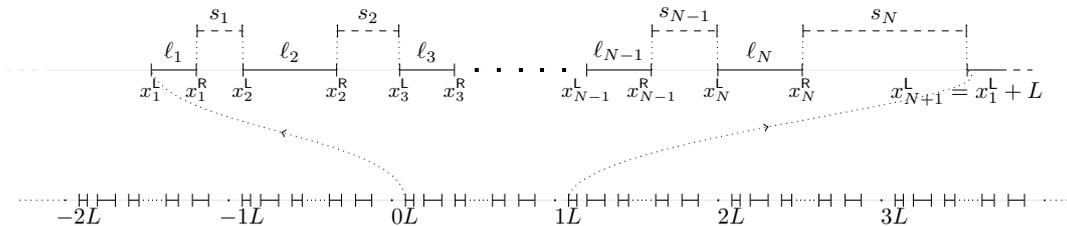
As a wave field $u(t,x)$ propagates in a heterogeneous medium, it is solution to the following one-dimensional wave equation:
\begin{equation}
\label{eqn:6gct3}
    \frac{1}{\kappa(x)}\frac{{\partial}^{2}}{{\partial} t^{2}}u(t,x) -\frac{{\partial}}{{\partial} x}\left(
    \frac{1}{\rho(x)}\frac{{\partial}}{{\partial} x}  u(t,x)\right) = 0, \qquad (t,x)\in\R\times\R.
\end{equation}
The parameters $\kappa(x)$ and $\rho(x)$ are the material parameters of the medium. We consider 
\begin{equation}
\label{eqn:jk5r8}
    \kappa(x)=
    \begin{dcases}
        \kappa_i & x\in D_i+L\Z,\\
        \kappa&  x\in\R\setminus \crystal,
    \end{dcases},\quad
    \rho(x)=
    \begin{dcases}
        \rho_b & x\in {\crystal},\\
        \rho&  x\in\R\setminus \crystal,
    \end{dcases}
\end{equation}
where $\rho_b, \rho \in \R_{>0}$ {and $i\in\Z$}. We are interested in both the Hermitian $\kappa_i\in\R_{>0}$ and the non-Hermitian $\kappa_i\in\C$ (with nonzero imaginary parts) cases. In the Hermitian case we typically set for simplicity $\kappa_i=\kappa_b$ for some $\kappa_b\in\R$ for all $1\leq i\leq N$. We stick with the general notation allowing different  $\kappa_i$'s, but think of them as equal to a positive constant in the Hermitian case.

Following the notation of \cite{ammari.fitzpatrick.ea2018, feppon.ammari2022}, the wave speeds inside the resonators $\crystal$ and inside the background medium $\R\setminus \crystal$, are denoted respectively by $v_i$ and $v$, the wave numbers respectively by $k_i$ and $k$, and the contrast between the $\rho$'s of the resonators and the background medium by $\delta$:
\begin{align}
    v_i:=\sqrt{\frac{\kappa_i}{\rho_b}}, \qquad v:=\sqrt{\frac{\kappa}{\rho}},\qquad
    k_i:=\frac{\omega}{v_i},\qquad k:=\frac{\omega}{v},\qquad
    \delta:=\frac{\rho_b}{\rho}.
\end{align}
 
Up to using a  Fourier decomposition in time, we can assume that 
the total wave field $u(t,x)$ is time-harmonic:
\begin{equation}
\label{eqn:t3qz2}
    u(t,x)=\Re ( e^{-\i\omega t}u(x) ),
\end{equation}
for a function $u(x)$ which solves the one-dimensional Helmholtz equations:
\begin{equation}
\label{eq:time indip Helmholtz}
    -\frac{\omega^{2}}{\kappa(x)}u(x)-\frac{\dd}{\dd x}\left( \frac{1}{\rho(x)}\frac{\dd}{\dd
    x}  u(x)\right) =0,\qquad x \in\R.
\end{equation}
%Propagating modes correspond to  the eigenmodes of the above operator.

In these circumstances of step-wise defined material parameters, the wave problem determined by \eqref{eq:time indip Helmholtz} can be rewritten as the following system of coupled one-dimensional Helmholtz equations:
\begin{align}
    \label{eq: system of coupled equations}
    \begin{dcases}
        \frac{\dd{^2}}{\dd x^2}u(x)+ \frac{\omega^2}{v^2}u(x) = 0, & x\in \R \setminus \crystal ,\\
        \frac{\dd{^2}}{\dd x^2}u(x)+ \frac{\omega^2}{v_i^2}u(x) = 0, & x\in D_i+L\Z ,\\
        u\vert_{\iR}(x^{\iLR}_{{i}}) - u\vert_{\iL}(x^{\iLR}_{{i}}) = 0, &  \forall {{i}}\in \Z ,\\
        \left.\frac{\dd u}{\dd x}\right\vert_{\iR}(x^{\iL}_{{i}}) - \delta\left.\frac{\dd u}{\dd x}\right\vert_{\iL}(x^{\iL}_{{i}}) = 0, & \forall {{i}}\in \Z ,\\
        \delta\left.\frac{\dd u}{\dd x}\right\vert_{\iR}(x^{\iR}_{{i}}) - \left.\frac{\dd u}{\dd x}\right\vert_{\iL}(x^{\iR}_{{i}}) = 0, & \forall {{i}}\in \Z ,
    \end{dcases}
\end{align}
{for $i\in\Z$ and }where for a one-dimensional function $w$ we denote by
\begin{align*}
    w\vert_{\iL}(x) \coloneqq \lim_{\substack{s\to 0\\ s>0}}w(x-s) \quad \mbox{and} \quad  w\vert_{\iR}(x) \coloneqq \lim_{\substack{s\to 0\\ s>0}}w(x+s)
\end{align*}
if the limits exist. 
\subsection{Floquet-Bloch theory}
To study this periodic problem we use Floquet-Bloch theory (see, for instance, \cite{ammari.fitzpatrick.ea2018a,kuchment1993Floquet}).
\begin{definition}
    Given $f(x)\in L^2(\R)$, the Floquet transform of $f$ with period $L$ is defined as
    \begin{align*}
        \mathcal{F}[f](x,\alpha)\coloneqq 
        \sum_{n\in\Z}f(x-{{n}}L)e^{\i \alpha {{n}} L}. 
    \end{align*}
\end{definition}
The Floquet transform is an analogue of the Fourier transform in the periodic case. Also for the Floquet transform, the original function may be recovered from the collection of the transformed ones via the following Plancherel type inversion:
\begin{align*}
    \mathcal{F}\inv[g](x) = \frac{L}{2\pi}\int_{-\frac{\pi}{L}}^{\frac{\pi}{L}} g(x,\alpha)\dd \alpha.
\end{align*}

A function $f:\R\to\C$ is said to be $\alpha$-quasiperiodic if $e^{-\i\alpha x}f(x)$ is periodic. One remarks that $\mathcal{F}[f](x,\alpha)$ is $\alpha$-quasiperiodic in $x$ (with period $L$) and periodic in $\alpha$ (with period $\frac{2\pi}{L}$). We will thus be interested in quasiperiodicities laying in the \emph{first Brillouin zone} $Y^* \coloneqq \R/\frac{2\pi}{L}\Z=(-\frac{\pi}{L},\frac{\pi}{L}]$.

We will denote ${\hat{u}}^\alpha(x)\coloneqq \mathcal{F}[u](x,\alpha){e^{-\i \alpha x}}$ the Floquet transform of a solution to \eqref{eq: system of coupled equations}.
Inserting  ${\hat{u}}^\alpha(x)$ into \eqref{eq:time indip Helmholtz} and using the periodicity of the material parameters
$\kappa(x)$ and $\rho(x)$, we find that ${\hat{u}}^\alpha(x)$ solves 
\begin{align}
\label{eq: ualpha solves}
    -\frac{\omega^{2}}{\kappa(x)}{\hat{u}}^\alpha(x) -\left( \frac{\dd}{\dd x}+\i \alpha
    \right)\left[\frac{1}{\rho(x)}\left( \frac{\dd}{\dd x}+\i \alpha
    \right) {\hat{u}}^\alpha(x)\right]=0,\quad x\in\R,
\end{align}
where $x\mapsto {\hat{u}}^\alpha(x)$ is $L$-periodic.

The following lemma describes the subwavelength resonances --- that is $\omega^\alpha$ for which \eqref{eq: ualpha solves} has a nontrivial solution --- in the Hermitian case.
\begin{lemma}
    Let $\kappa\in\R_{>0}$ and $\kappa_i=\kappa_b \in \R_{>0}$ for all $i$. Then there exists a family of real non-negative eigenfrequencies $(\omega_p^\alpha)_{p\in\N}$ such
    that, for any $p\in\N$:
    \begin{enumerate}
        \item[(i)] $\alpha\mapsto \omega_p^\alpha$ is an analytic, $2\pi/L$-periodic function of $\alpha$ for any $p \geq 1$;
        \item[(ii)] $\alpha\mapsto\omega_0^\alpha$ is an analytic $2\pi/L$--periodic function except as $\alpha\in \frac{2\pi}{L}\Z$, where it has a linear behaviour:
            \[
                \omega_0^\alpha\sim  c |\alpha|+\BO(\alpha^{2}) \text{ as
                }|\alpha|\rightarrow 0,
            \] 
            corresponding to the crossing of the branches $\omega_0^\alpha$ and
            $\omega_0^{-\alpha}$. Furthermore, a direct computation shows that \begin{equation}
            \label{eqn:a4txl}
                c=\sqrt{ \frac{\int_{0}^{L}\frac{1}{\rho(x)}\dd
                x}{\int_{0}^{L}\frac{1}{\kappa(x)}\dd x}} \sim v_b \text{ as
                }\delta\rightarrow 0;
            \end{equation}
        \item[(iii)] For any $\alpha\in (-\pi/L,\pi/L)$, there exists a nontrivial $L$-periodic function $u_p^\alpha(x)$ solution to \eqref{eq: ualpha solves} with $\omega=\omega_p^\alpha$. The function $u_p^\alpha(x)$ is called a \emph{Bloch mode} and can be chosen analytic with respect to the parameter $\alpha \in\R$;
        \item[(iv)] By convention, one can choose 
            \[
                0=\omega_0^{\alpha=0}<\omega_1^{\alpha=0}\leq  \omega_2^{\alpha=0}\leq  \dots, 
            \] 
            where $\omega_p^{\alpha=0}$ is the $p$-th eigenvalue of the symmetric eigenvalue problem
            \eqref{eq: ualpha solves} at $\alpha=0$;
        \item[(v)] $\omega_p^\alpha=0$ with $\alpha\in
            \left(-\frac{\pi}{L},\frac{\pi}{L}\right)$ if and only if $\alpha=0$ and $p=0$, which is associated to the constant Bloch mode. As a consequence, 
            \[
                \omega_p^\alpha>0 \text{ for any }p \geq 1 \text{ or for }p=0 \text{
                    with } \alpha\neq 0;
            \] 
        \item[(vi)] $\omega_p^\alpha=\omega_p^{-\alpha}$ and $\bar{u_p^\alpha(x)}$ is
            a Bloch mode for the quasiperiodicity $-\alpha$.
    \end{enumerate}
\end{lemma}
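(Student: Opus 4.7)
The approach is to recast \eqref{eq: ualpha solves} as a self-adjoint eigenvalue problem on the compact torus $Y = \R/L\Z$. Define $A_\alpha u \coloneqq -(\partial_x + \i\alpha)[\rho^{-1}(\partial_x + \i\alpha) u]$ on $H^1_\per(Y)$, so that \eqref{eq: ualpha solves} takes the form $A_\alpha u = \omega^2 \kappa^{-1} u$. Since $\rho$ and $\kappa$ are real positive, $A_\alpha$ is non-negative and self-adjoint, and the compactness of the embedding $H^1_\per(Y) \hookrightarrow L^2(Y)$ yields a discrete sequence of real non-negative eigenvalues $\{(\omega_p^\alpha)^2\}_{p\geq 0}$. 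Labelling in increasing order gives the non-negative frequencies $\omega_p^\alpha \geq 0$ together with part~(iv).

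Parts~(v) and~(vi) follow from the explicit form of $A_\alpha$. Green's identity gives
\begin{align*}
  \int_Y \bar u \, A_\alpha u \dd x = \int_Y \rho(x)^{-1} |(\partial_x + \i\alpha)u|^2 \dd x,
\end{align*}
which vanishes iff $u$ is proportional to $e^{-\i\alpha x}$; this function is $L$-periodic precisely when $\alpha \in \tfrac{2\pi}{L}\Z$, so in $Y^* = (-\pi/L,\pi/L]$ only at $\alpha = 0$, with the constant as unique ground state, proving~(v). Since $\rho, \kappa$ are real, $\overline{A_\alpha u} = A_{-\alpha}\bar u$, so complex conjugation maps Bloch modes at $\alpha$ to Bloch modes at $-\alpha$ with the same eigenvalue, giving~(vi). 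For the analyticity claims in~(i) and~(iii), observe that $\{A_\alpha\}$ is a self-adjoint holomorphic family of type~(A) in Kato's sense, its quadratic form being polynomial in $\alpha$ on the fixed domain $H^1_\per(Y)$; Kato's perturbation theory then supplies locally analytic branches of $(\omega_p^\alpha)^2$ and analytically chosen Bloch modes $u_p^\alpha$ wherever the eigenvalues are simple. The unitary shift $u \mapsto e^{-2\pi\i x/L}u$ intertwines $A_{\alpha + 2\pi/L}$ with $A_\alpha$, giving $2\pi/L$-periodicity, and the classical non-crossing of one-dimensional Bloch bands extends these local branches to global ones. Since $(\omega_p^\alpha)^2 > 0$ for $p \geq 1$ by~(v), the positive square root $\omega_p^\alpha$ is still analytic, whereas for $p = 0$ the square root introduces the conical singularity at $\alpha = 0$ featuring in~(ii).

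The linear coefficient in~(ii) is identified by second-order perturbation around the constant ground state. Since $(\omega_0^\alpha)^2$ is analytic, even in $\alpha$ by~(vi), and vanishes quadratically at $\alpha = 0$, write $(\omega_0^\alpha)^2 = c^2\alpha^2 + \BO(\alpha^4)$ and $u_0^\alpha = 1 + \i\alpha v + \BO(\alpha^2)$ with $v \in H^1_\per(Y)$. Inserting into $A_\alpha u_0^\alpha = (\omega_0^\alpha)^2 \kappa^{-1} u_0^\alpha$ and matching orders in $\alpha$, the $\BO(\alpha)$ equation fixes $v$ up to a constant, and testing the $\BO(\alpha^2)$ equation against the constant mode yields
\begin{align*}
  c^2 \int_0^L \frac{1}{\kappa(x)}\dd x = \int_0^L \frac{1}{\rho(x)}\dd x,
\end{align*}
which is \eqref{eqn:a4txl}; the asymptotic $c \sim v_b$ as $\delta \to 0$ then follows by direct computation from \eqref{eqn:jk5r8}. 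Taking positive square roots gives $\omega_0^\alpha \sim c|\alpha|$ and displays the crossing with $\omega_0^{-\alpha}$. The main obstacle is the global analyticity in~(i): local analyticity is routine from Kato theory, but ruling out band crossings throughout $Y^*$ relies on the non-crossing feature of 1D Floquet-Bloch spectra, which in the present piecewise-constant setting can alternatively be verified directly via the transfer matrix framework of \cref{sec: quasiperiodic Dirichlet to Neumann}.
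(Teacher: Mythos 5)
Your proposal is correct and follows essentially the same route as the paper: both recast \cref{eq: ualpha solves} as a holomorphic family of self-adjoint (Hermitian) operators on $H^1_{\per}((0,L))$, obtain analytic eigenvalue branches by Rellich--Kato perturbation theory, use parity to handle the square-root singularity of $\omega_0^\alpha$ at $\alpha=0$, and identify $c$ by a second-order perturbation computation closed with Fredholm's alternative (your ``test the $\BO(\alpha^2)$ equation against the constant mode'' is exactly the paper's solvability condition). Your explicit verifications of (iv)--(vi) fill in details the paper leaves implicit.

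One caveat: the step you single out as ``the main obstacle'' is resolved incorrectly. The ``classical non-crossing of one-dimensional Bloch bands'' does not hold here --- the paper's own analysis of the equal dimer (\cref{lemma: double eigenvalue at pi/L} and \cref{eq: asymptotic bands regularly spaced dimers}) exhibits bands touching at $\alpha=\pm\pi/L$, so you cannot rule out degeneracies throughout $Y^*$. What saves the argument is that for a \emph{one-parameter} self-adjoint holomorphic family, Rellich's theorem yields globally analytic eigenvalue and eigenvector branches even through crossings (at the price that the analytic labelling need not coincide with the increasing-order labelling at a crossing); this is precisely the form of the theorem the paper invokes, and it makes the non-crossing claim unnecessary. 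With that substitution your argument is complete.
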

\begin{proof}
    All these properties result from the fact that 
    \[
        \alpha \mapsto -\left( \frac{\dd}{\dd x}+\i \alpha \right)\left[
            \frac{1}{\rho(x)}\left( \frac{\dd}{\dd x}+\i \alpha \right) \right]
    \] 
    form a holomorphic family of Hermitian operators on the space of
    $H^{1}_{\per}((0,L))$, where  $H^{1}_{\per}((0,L))$ is the usual  Sobolev space of periodic complex-valued functions on $(0,L)$.
     Rellich's theorem ensures in particular that $\alpha\mapsto (\omega_p^\alpha)^{2}$ is analytic, and hence $\omega_p^\alpha$ is analytic for all values of $\alpha$ except maybe $\omega_0^\alpha$ at $\alpha=0$. However, the parity property implies that $(\omega_0^\alpha)^{2}=\BO(\alpha^{2})$ as $\alpha\rightarrow 0$, and then $\omega_0^\alpha/\vert\alpha\vert$ is analytic in $\alpha$.

    The value of $c$ in \eqref{eqn:a4txl} can be found as follows. Denoting
    $\lambda_0(\alpha):=(\omega_0^\alpha)^{2}$, we find by differentiating
    \eqref{eq: ualpha solves} with respect to $\alpha$ that
    \begin{multline}
        \label{eqn:0nlp8}
        \left[-\frac{\lambda_0(\alpha)}{\kappa(x)}-\left( \frac{\dd}{\dd x}+\i \alpha \right)\left[ \frac{1}{\rho(x)}
        \left(
        \frac{\dd}{\dd x}+\i\alpha \right) 
        \right]\right]\frac{\dd}{\dd \alpha} {\hat{u}}_0^\alpha(x) \\=
        \frac{\lambda_0\pri(\alpha)}{\kappa(x)}{\hat{u}}_0^\alpha(x)+\i \frac{1}{\rho(x)}
        \left( \frac{\dd}{\dd x}+\i \alpha \right){\hat{u}}_0^\alpha(x)
        +\left(\frac{\dd}{\dd x}+\i \alpha\right)\left[\frac{1}{\rho(x)}\i\alpha {\hat{u}}_0^\alpha(x)\right].
    \end{multline} 
    Setting $\alpha=0$ and using that $\lambda_0(0)=0$ and ${\hat{u}}_0^\alpha(x)\equiv
    u_0$ is a constant, we obtain that $\lambda_0\pri(0)=0$, and then $\frac{\dd}{\dd
    \alpha}u_0^0(x)=0$. Then, differentiating \eqref{eqn:0nlp8} with respect to $\alpha$ and setting $\alpha=0$, we obtain
    \[
        \left[-\left( \frac{\dd}{\dd x} \right)\left[ \frac{1}{\rho(x)}
        \left(
        \frac{\dd}{\dd x} \right) 
        \right]\right]\frac{\dd^{2}}{\dd \alpha^{2}}u_0^0(x)=
        \frac{\lambda_0^{\prime\prime}(0)}{\kappa(x)}u_0 -2\frac{1}{\rho(x)}u_0. 
    \] 
    From  Fredholm's alternative, this equation admits a $L$-periodic solution if
    and only if 
    \[
        \int_0^{L}\left( \frac{\lambda_0^{\prime\prime}(0)}{\kappa(x)}
        u_0-2\frac{1}{\rho(x)}u_0 \right)\dd x=0,
    \] 
    which yields
\[
    \frac{\lambda_0^{\prime\prime}(0)}{2}=\frac{\int_0^{L}\frac{1}{\rho(x)}\dd
    x}{\int_0^{L}\frac{1}{\kappa(x)}\dd x},
\] 
    and hence \eqref{eqn:a4txl} holds. The asymptotic expansion \eqref{eqn:a4txl} is obtained
    then from the formula
    \[
   \frac{\int_0^{L}\frac{1}{\rho(x)}\dd
    x}{\int_0^{L}\frac{1}{\kappa(x)}\dd x} 
    =\frac{\frac{1}{\rho_b}\sum_{i=1}^{N}\ell_i +\frac{1}{\rho}
    \left(L-\sum_{i=1}^{N}\ell_i\right)}{\frac{1}{\kappa_b}\sum_{i=1}^{N}\ell_i
    +\frac{1}{\kappa} \left(L-\sum_{i=1}^{N}\ell_i\right)}
    =\frac{\sum_{i=1}^{N}\ell_i+\delta \left( L-\sum_{i=1}^{N}\ell_i \right)
    }{\frac{1}{v_b^{2}}\sum_{i=1}^{N}\ell_i+\frac{\delta}{v^{2}}\left(
    L-\sum_{i=1}^{N}\ell_i \right) }=v_b^{2}+O(\delta).
    \] 
\end{proof}

We recall from \cite{ammari.fitzpatrick.ea2018a}  that the 
\emph{subwavelength spectrum} of the operator associated to \eqref{eq:time indip Helmholtz} is given by 
\begin{align*}
    \sigma =\bigcup_{p=0}^{N-1}\bigcup_{\alpha\in Y^*}\omega_p^\alpha,
\end{align*}
both in the Hermitian and the non-Hermitian cases.

This describes the band structure of the subwavelength spectrum of \eqref{eq:time indip Helmholtz}: for each $p$ the spectrum traces out bands $\omega_p^\alpha$ as $\alpha$ varies. In the Hermitian case, the spectrum is said to have a \emph{subwavelength band gap} if, for some $0\leq p\leq N-1$, $\max_\alpha \omega_p^\alpha<\min_\alpha \omega_{p+1}^\alpha$. In the non-Hermitian case, a subwavelength band gap is a connected component of $\C\setminus\sigma$. A band is said to be \emph{non-degenerate} if it does not intersect any other band.

Consequently, we study the equivalent one-dimensional spectral problem in the unit cell $Y$ for the
function $u (x,\alpha):={\hat{u}}^\alpha(x)e^{\i \alpha x}{=\mathcal{F}[u](x,\alpha)}$ for $\alpha \in 
Y^*$:
\begin{align}
    \label{eq: system of coupled equations quasiperiodic}
    \begin{dcases}
        \frac{\dd{^2}}{\dd x^2}u(x)+ \frac{\omega^2}{v^2}u(x,\alpha) = 0, & x\in \R \setminus \crystal ,\\
        \frac{\dd{^2}}{\dd x^2}u(x)+ \frac{\omega^2}{v_i^2}u(x,\alpha) = 0, & x\in D_i+L\Z ,\\
        u\vert_{\iR}(x^{\iLR}_n,\alpha) - u\vert_{\iL}(x^{\iLR}_n,\alpha) = 0, &  \forall n\in \Z,\\
        \left.\frac{\dd u}{\dd x}\right\vert_{\iR}(x^{\iL}_n,\alpha) - \delta\left.\frac{\dd u}{\dd x}\right\vert_{\iL}(x^{\iL}_n,\alpha) = 0, & \forall n\in \Z ,\\
        \delta\left.\frac{\dd u}{\dd x}\right\vert_{\iR}(x^{\iR}_n,\alpha) - \left.\frac{\dd u}{\dd x}\right\vert_{\iL}(x^{\iR}_n,\alpha) = 0, & \forall n\in \Z ,\\
        u(x+L,\alpha) = u(x,\alpha)e^{\i\alpha L} & \text{for almost every } x\in\R ,
    \end{dcases}
\end{align}
and consider the subwavelength resonances for the scattering problem \eqref{eq: system of coupled equations quasiperiodic} by performing an asymptotic analysis in the low-frequency and high-contrast regimes
\begin{equation}
\label{eqn:5qmqp}
\omega\rightarrow 0 \quad \mbox{as } \quad \delta\rightarrow 0.
\end{equation}
For this, we adapt the Dirichlet-to-Neumann approach of \cite{feppon.ammari2022b, feppon.cheng.ea2022} to the one-dimensional quasiperiodic problem \eqref{eq: system of coupled equations quasiperiodic}.

%%%%%%%%%%%%%%%%%%%%%%%%%%%%%%%%%%%%%
% Dirichlet to Neumann
%%%%%%%%%%%%%%%%%%%%%%%%%%%%%%%%%%%%%
\section{Quasiperiodic Dirichlet-to-Neumann map}\label{sec: quasiperiodic Dirichlet to Neumann}
In this section, we characterize the Dirichlet-to-Neumann map of the Helmholtz operator on the domain $Y$ with the quasiperiodic boundary conditions. We give a fully explicit expression of this operator in \Cref{prop:DTN}, before computing its leading-order asymptotic expansion in terms of $\delta$ in \Cref{prop:expansion DTN}.

\medskip

In all what follows, we denote by $H^1(D)$ the usual Sobolev space of complex-valued functions on $D$ and let $H^{1}_{\per}(\R)$ be the usual Sobolev space of periodic complex-valued functions on $\R$
and $H^{1}_{\per, \alpha}(\R):=\{ u: e^{-\i \alpha x} u \in H^{1}_{\per}(\R)\}$.

Throughout the paper, we also denote by $\C^{2N,\alpha}$ the set of quasiperiodic boundary data
$ f\equiv (f_i^{\iLR})_{i\in\Z}$ satisfying 
\[
f_{i+N}^{\iLR}=e^{\i \alpha L} f_i^{\iLR},
\] 
where 
$f_i^{\iL}$ (respectively $f_i^{\iR}$) refers to the component associated to $x_i^{\iL}$ (respectively to $x_i^{\iR}$) {for $i\in\Z$}.
The space of such quasiperiodic sequences is clearly of dimension $2N$. The following lemma provides an explicit expression for the solution to exterior problems on $\R \setminus \crystal$.

\begin{lemma}
    \label{def:DTN}
    Assume that $k$ is not of the form $k=n\pi/s_{i}$ for some nonzero integer
    $n\in\Z\backslash\{0\}$ and index $1\leq i\leq N$. Then, for any quasiperiodic sequence $(f_i^{\iLR})_{1\leq i\leq N}\in\C^{2N,\alpha}$, there exists a unique solution  $w_f^{\alpha}\in
    H^{1}_{\per,\alpha}(\R)$ to the exterior problem:
\begin{align}
   \label{eqn:defDTN}
   \begin{dcases}
    \left(\frac{\dd^2}{\dd x^2} +k^{2} \right) w_f^{\alpha}(x)
    =0, & x\in \R \setminus \crystal,\\
     w_f^{\alpha}(x_i^{\iLR}) = f_i^{\iLR}, & \forall \; 1\leq i \leq N,\\
     w_f^{\alpha}(x+L)= e^{\i \alpha L}w_f^{\alpha}(x), &  \qquad x\in \R \setminus \crystal.
\end{dcases}
\end{align}
Furthermore, when $k\neq 0$, the solution $w_f^{\alpha}$ reads explicitly 
\begin{equation}
\label{eqn:3dqrq}
        w_f^{\alpha}(x) =   a_i e^{\i k x}+b_i e^{-\i kx}  \text{ if }x\in
            (x_i^{\iR},x_{i+1}^{\iL}),\qquad \forall i\in\Z,
\end{equation}
    where $a_i$ and $b_i$ are given by the matrix-vector product
   \begin{equation}
   \label{eqn:pxggr}
        \begin{pmatrix}
           a_i\\
           b_i
        \end{pmatrix} = -\frac{1}{2 \i \sin(k s_{i})} \begin{pmatrix}
            e^{-\i k x_{i+1}^{\iL}} & -e^{-\i k x_i^{\iR}} \\
            -e^{\i k x_{i+1}^{\iL}} & e^{\i k x_i^{\iR}} 
        \end{pmatrix}
        \begin{pmatrix}
           f_i^{\iR}\\
           f_{i+1}^{\iL}
        \end{pmatrix}.
   \end{equation}
\end{lemma}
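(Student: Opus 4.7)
The plan is to solve the boundary value problem locally on each connected component of $\R\setminus\crystal$ and then check that the resulting function automatically inherits $\alpha$-quasiperiodicity from the quasiperiodicity of the boundary data.

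First, the exterior domain decomposes as $\R\setminus\crystal=\bigsqcup_{i\in\Z}(x_i^{\iR},x_{i+1}^{\iL})$, each component being an open interval of length $s_i$ (with $s_{i+N}=s_i$). On each such interval the Helmholtz equation $w''+k^2 w=0$ admits the two-parameter family of solutions $w(x)=a_i e^{\i k x}+b_i e^{-\i k x}$ when $k\neq 0$. Imposing the Dirichlet traces $w(x_i^{\iR})=f_i^{\iR}$ and $w(x_{i+1}^{\iL})=f_{i+1}^{\iL}$ gives a $2\times 2$ linear system for $(a_i,b_i)$ whose determinant computes to $e^{\i k(x_i^{\iR}-x_{i+1}^{\iL})}-e^{-\i k(x_i^{\iR}-x_{i+1}^{\iL})}=-2\i\sin(k s_i)$. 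The hypothesis $k\notin\{n\pi/s_i : n\in\Z\setminus\{0\},\,1\leq i\leq N\}$ together with the periodicity $s_{i+N}=s_i$ ensures this determinant is nonzero for every $i\in\Z$, so Cramer's rule produces the unique pair $(a_i,b_i)$ given by the stated matrix-vector formula. Uniqueness of $w_f^\alpha$ in $H^1_{\per,\alpha}(\R)$ follows by applying the same argument to the difference of two solutions: the zero Dirichlet data forces $a_i=b_i=0$ on every interval.

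Second, I would verify that the function assembled from these local pieces is genuinely $\alpha$-quasiperiodic. Using $x_{i+N}^{\iLR}=x_i^{\iLR}+L$, $s_{i+N}=s_i$ and the hypothesis $f_{i+N}^{\iLR}=e^{\i\alpha L}f_i^{\iLR}$, the explicit formula \cref{eqn:pxggr} gives
\[
a_{i+N}=e^{\i(\alpha-k)L}a_i,\qquad b_{i+N}=e^{\i(\alpha+k)L}b_i.
\]
Substituting into the representation \cref{eqn:3dqrq} on $(x_{i+N}^{\iR},x_{i+N+1}^{\iL})=(x_i^{\iR}+L,x_{i+1}^{\iL}+L)$, the factors $e^{\mp\i kL}$ cancel the phases coming from the shift $x\mapsto x+L$, yielding $w_f^\alpha(x+L)=e^{\i\alpha L}w_f^\alpha(x)$ on each interval. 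The required $H^1_{\per,\alpha}(\R)$-regularity is then automatic once one extends $w_f^\alpha$ arbitrarily (e.g.\ by an $H^1$-extension) inside each resonator $D_i$, since the exterior restriction is smooth on each component and the boundary traces match the prescribed data.

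Finally, the degenerate case $k=0$ is handled by replacing the ansatz by $a_i+b_i x$ and solving the corresponding $2\times 2$ system with determinant $s_i\neq 0$; alternatively one passes to the limit $k\to 0$ in \cref{eqn:pxggr}, which is why the explicit formula is stated only for $k\neq 0$. There is no real obstacle: the only points requiring care are the correct accounting of the phase factors $e^{\pm\i kL}$ arising from the translation by $L$, and confirming that the quasiperiodicity hypothesis on the data propagates consistently through Cramer's formula. Both are bookkeeping rather than analytic difficulty.
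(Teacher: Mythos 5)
Your proposal is correct and is essentially the argument the paper has in mind: the paper's own ``proof'' is only a citation to Lemma~2.1 of the finite-chain paper \cite{feppon.cheng.ea2022}, which performs exactly your local solve on each gap $(x_i^{\iR},x_{i+1}^{\iL})$ with determinant $-2\i\sin(ks_i)$ and Cramer's rule, and your added verification that $a_{i+N}=e^{\i(\alpha-k)L}a_i$, $b_{i+N}=e^{\i(\alpha+k)L}b_i$ is precisely the extra bookkeeping needed to transfer that result to the quasiperiodic setting. The only caveat is cosmetic: uniqueness in $H^1_{\per,\alpha}(\R)$ should be understood as uniqueness of the restriction to $\R\setminus\crystal$ (the values inside the resonators are not constrained by \eqref{eqn:defDTN}), which your remark about extending arbitrarily into each $D_i$ already makes clear.
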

\begin{proof}
Identical to \cite[Lemma 2.1]{feppon.cheng.ea2022}.
\end{proof}
\begin{definition}[Dirichlet-to-Neumann map]
  For any $k\in\C$ which is not of the form 
    $n\pi/s_{i}$ for some $n\in\Z\backslash\{0\}$ and $1\leq i\leq N-1$, 
    the Dirichlet-to-Neumann map with wave number $k$ is the linear operator 
      $\mathcal{T}^{k,\alpha}\,:\,\C^{2N}\to \C^{2N}$ defined by 
      \begin{equation}
      \label{eqn:rm93y}
    \mathcal{T}^{k,\alpha}[(f_i^{\iLR})_{1 \leq i \leq N}]=
    \left(\pm\frac{\dd w_f^{\alpha}}{\dd x}(x_i^{\iLR})\right)_{1 \leq i \leq N},
      \end{equation}
where $w_f^{\alpha}$ is  the unique solution to  \eqref{eqn:defDTN}.
\end{definition}

The condition that $k\in\C$ is not of the form $n\pi/s_{i}$ for some $n\in\Z\backslash\{ 0 \}$ and $i\in\Z$ is equivalent to state that $k^{2}$ is not a (quasiperiodic) Dirichlet eigenvalue of $-\dd^{2}/\dd x^{2}$ on $\R\setminus \crystal$. We consider a minus sign in \eqref{eqn:rm93y} on the abscissa $x_i^{\iL}$ because $\mathcal{T}^{k,\alpha}[(f_j^{\iLR})_{1\leq j\leq N}]_i^{\iLR}$ is the normal derivative of $w_f^{\alpha}$ at $x_i^{\iLR}$, with the normal pointing \emph{outward} the segment $(x_i^{\iL},x_i^{\iR})$. This convention allows us to maintain some analogy with the analysis in the three-dimensional setting considered in \cite[Section 3]{feppon.ammari2022b}.
      
In the next proposition, we compute $\mathcal{T}^{k,\alpha}$ explicitly.
\begin{proposition} \label{prop:DTN}
    The Dirichlet-to-Neumann map
    $\mathcal{T}^{k,\alpha}$ admits the following explicit matrix representation: for any $k\in\C\backslash\{ n\pi/s_{i}\,|\, n\in\Z\backslash\{0\},\, 1\leq i\leq N-1 \}$, $ f\equiv (f_i^{\iLR})_{1\leq i\leq N}$, $\mathcal{T}^{k,\alpha}[f]\equiv (\mathcal{T}^{k,\alpha}[ f]_{i}^{\iLR})_{1\leq i\leq N}$ is given by
    \begin{equation}
    \label{eqn:DTNexplicit}
    \begin{pmatrix}
        \mathcal{T}^{k,\alpha}[f]_1^{\iL} \\
        \mathcal{T}^{k,\alpha}[f]_1^{\iR} \\
       \vdots\\
        \mathcal{T}^{k,\alpha}[f]_N^{\iL} \\
        \mathcal{T}^{k,\alpha}[f]_N^{\iR} 
    \end{pmatrix} = T^{k,\alpha}  \begin{pmatrix}
        f_1^{\iL}\\
       f_1^{\iR}\\
       \vdots\\
       f_N^{\iL}\\
       f_N^{\iR}
    \end{pmatrix},
\end{equation}
with
\begin{align}
    \label{eq: Matrix form DTN}
    T^{k,\alpha} = \begin{pmatrix}
        -\frac{k\cos(ks_{N})}{\sin(ks_{N})} &&&&&
        \frac{k}{\sin(ks_{N})}e^{-\i \alpha L} \\
        & A^{k}(s_{1}) & & & & \\
        & & A^{k}(s_{2}) & & & \\
        &  & &\ddots & & \\
        & & & &  A^{k}(s_{(N-1)}) &\\
        \frac{k}{\sin(ks_{N})}e^{\i \alpha L}  & & & & & -\frac{k\cos(ks_{N})}{\sin(ks_{N})} \\
    \end{pmatrix},
\end{align} where for any real $\ell\in\R$, $A^{k}(\ell)$ denotes the $2\times 2$ symmetric matrix given by
    \begin{equation}
    \label{eqn:1lzi8}
        A^{k}(\ell):=\begin{pmatrix}
            -\dfrac{k \cos(k\ell)}{\sin(k\ell)} & \dfrac{k}{\sin(k\ell)} \\
            \dfrac{k}{\sin(k\ell)} & -\dfrac{k\cos(k\ell)}{\sin(k\ell)}
        \end{pmatrix}.
    \end{equation}
    We will thus use $T^{k,\alpha}$ and $\mathcal{T}^{k,\alpha}$ interchangeably.
\end{proposition}
\begin{proof}
    Following the proof of \cite[Proposition 2.1]{feppon.cheng.ea2022}, it is easy to infer that 
    \[
        \begin{pmatrix}
            T^{k}[f]_{i}^{\iR}, \\
            T^{k}[f]_{i+1}^{\iL}
        \end{pmatrix}
            =
            \begin{pmatrix}
        \frac{\dd w_f^{\alpha}}{\dd x}(x_i^{\iR}) \\
        -\frac{\dd w_f^{\alpha}}{\dd x}(x_{i+1}^{\iL})
    \end{pmatrix}
    =A^{k}(s_{i}) \begin{pmatrix}
        f_{i}^{\iR} \\
        f_{i+1}^{\iL}
    \end{pmatrix} \text{ for all }i\in\Z,
    \] 
    where we extend $(f_i^{\iLR})_{1\leq i\leq N}$ by quasiperiodicity. The values of $(\mathcal{T}^{k,\alpha}[f]_i^{\iR},\mathcal{T}^{k,\alpha}[f]_{i+1}^{\iL})$ follow, with $1\leq i\leq N-1$. Then, we obtain the following values at $x_1^{\iL}$ and $x_{N}^{\iR}$ by using the quasiperiodicity:
    \[
   \begin{pmatrix}
        \frac{\dd w_f^{\alpha}}{\dd x}(x_N^{\iR}) \\
        -\frac{\dd w_f^{\alpha}}{\dd x}(x_{N+1}^{\iL})
   \end{pmatrix}
    =
   \begin{pmatrix}
        \frac{\dd w_f^{\alpha}}{\dd x}(x_N^{\iR}) \\
        -e^{\i \alpha L}\frac{\dd w_f^{\alpha}}{\dd x}(x_{1}^{\iL})
   \end{pmatrix}
    =A^{k}(s_{i }) \begin{pmatrix}
        f_{i}^{\iR} \\
        f_{i+1}^{\iL}
    \end{pmatrix}
    =A^{k}(s_{N})\begin{pmatrix}
        f_N^{\iR} \\
        f_1^{\iL}e^{\i \alpha L}
    \end{pmatrix},
    \] 
    which yields \eqref{eqn:DTNexplicit}.
\end{proof}
Remarkably, the $2N\times 2N$ matrix associated to $\mathcal{T}^{k,\alpha}$ is Hermitian.
It can be verified that the solution $w_f^{\alpha}$ to \eqref{eqn:defDTN} with $k\neq 0$ converges as $k\rightarrow 0$ to the solution to the same equation with $k=0$. As it can be expected from the matrix representation \eqref{eqn:DTNexplicit}, the operator $\mathcal{T}^{k,\alpha}$ is analytic in a neighbourhood of  $k=0$. In all what follows, we denote by $r$ the convergence radius
\[
    r:=\frac{\pi}{\max_{1\leq i\leq N}s_{i }}.
\] 
We identify $\mathcal{T}^{k,\alpha}$ with the matrix $T^{k,\alpha}$ of
\eqref{eq: Matrix form DTN}.
\begin{corollary}
    \label{prop:expansion DTN}
    The Dirichlet-to-Neumann map $\mathcal{T}^{k,\alpha}$ can be extended to 
    a holomorphic $2N\times 2N$ matrix with respect to the wave number $k \in \C$ on the disk $\vert k\vert<r$.
    Therefore, there exists a family of $2N\times 2N$ matrices $(\mathcal{T}^\alpha_{2n})_{n\in\N}$ such that $\mathcal{T}^{k,\alpha}$ admits the following  convergent series representation:
    \begin{equation}
       \label{eqn:DTN expansion}
        \mathcal{T}^{k,\alpha} =  
        \sum_{n=0}^{+ \infty} k^{2n} \mathcal{T}^{\alpha}_{2n},\qquad \forall k\in\C
        \text{ with }|k|<r.
    \end{equation}
The  matrices $\mathcal{T}^{\alpha}_0$ and $\mathcal{T}^{\alpha}_2$ of this series explicitly read
    \begin{equation}
    \label{eqn:3ttvg}
    \mathcal{T}_0^{\alpha}=\begin{pmatrix}
        -\frac{1}{s_{N }}         &&&&& \frac{1}{s_{N }}e^{-\i \alpha L}  \\
        & A_{0}(s_{1 }) & & & & \\
        & & A_{0}(s_{2 }) & & & \\
        &  & &\ddots & & \\
        & & & &  A_{0}(s_{N-1}) &\\
        \frac{1}{s_{N }}e^{\i \alpha L}  & & & & & -\frac{1}{s_{N}} \\
    \end{pmatrix},
    \end{equation}
    \begin{equation}
    \mathcal{T}_2^{\alpha}= \begin{pmatrix}
        \frac{1}{3}s_{N } & & & & \frac{1}{6}s_{N }e^{-\i \alpha L}\\
        & A_2(s_{1 }) &  & & \\
        & & \ddots & & \\
        & & & A_2(s_{ N-1 }) & \\
        \frac{1}{6}s_{N }e^{\i \alpha L}& &  & & \frac{1}{3}s_{N }
    \end{pmatrix},
    \end{equation}
    where for any $\ell \in\R$, $A_{0}(\ell)$ and $A_2(\ell)$ are the $2\times 2$ matrices given by
    \begin{equation}
    A_{0}(\ell):=\begin{pmatrix}
        -1/\ell & 1/\ell \\
        1/\ell & -1/\ell
    \end{pmatrix},\qquad
        A_2(\ell):=\begin{pmatrix}
            \frac{\ell}{3}& \frac{\ell}{6} \\
            \frac{\ell}{6} & \frac{\ell}{3}
        \end{pmatrix}.
    \end{equation}
\end{corollary}
\begin{proof}
    The result is immediate by noticing that for a given $\ell>0$, the matrix $A^{k}(\ell)$ of
    \eqref{eqn:1lzi8} is analytic with respect to the parameter $k$ on the disk $\vert k\vert \ell < \pi$, and its components are even functions of $k$. The expressions for $\mathcal{T}_0^{\alpha}$ and $\mathcal{T}_2^{\alpha}$ follow by computing the Taylor series of $A^{k}(\ell)$.
\end{proof}
\begin{remark}
    The expression \eqref{eqn:3ttvg} for $\mathcal{T}_0^{\alpha}$ can be more conveniently stated in terms of its action on a vector $f\equiv (f_i^{\iLR})_{1\leq i\leq N}\in\C^{2N}$ as
    \begin{equation}
    \label{eqn:4mxjs}
        \left\{\begin{aligned}
            \mathcal{T}_0^{\alpha}[f]_1^{\iL}& = -\frac{1}{s_{N }}(f_1^{\iL}-e^{-\i \alpha
            L}f_N^{\iR}), &\\
            \mathcal{T}_0^{\alpha}[f]_{i}^{\iL} &= -\frac{1}{s_{ i-1 }}(f_i^{\iL}-f_{i-1}^{\iR}), &
            2\leq i\leq N, \\
            \mathcal{T}_0^{\alpha}[f]_i^{\iR} &= \frac{1}{s_{i }}(f_{i+1}^{\iL}-f_i^{\iR}), &
            1\leq i\leq N-1, \\
            \mathcal{T}_0^{\alpha}[f]_N^{\iR} &= \frac{1}{s_{N }}(e^{\i\alpha
            L}f_1^{\iL}-f_N^{\iR}),& 
        \end{aligned}\right.
    \end{equation}
    or even more simply
    \[
        \left\{\begin{aligned}
            \mathcal{T}^{k,\alpha}[f]_{i}^{\iL}&=-\frac{1}{s_{ i-1 }}(f_i^{\iL}-f_{i-1}^{\iR})+\frac{k^{2}s_{i-1 }}{3}\left(f_i^{\iL}+\frac{1}{2}f_{i-1}^{\iR}\right)+O(k^{4}),\\
            \mathcal{T}^{k,\alpha}[f]_{i}^{\iR}&=\frac{1}{s_{i }}(f_{i+1}^{\iL}-f_i^{\iR})
            +\frac{k^{2}s_{i }}{3}\left(f_{i}^{\iR}+\frac{1}{2}f_{i+1}^{\iL}\right)
            +O(k^{4}),
        \end{aligned}\right. \qquad \forall i\in\Z,
    \]
    for any quasiperiodic sequence $(f_i^{\iLR})_{i\in\Z}\in\C^{2N,\alpha}$.
\end{remark}

%%%%%%%%%%%%%%%%%%%%%%%%%%%%%%%%%%%%%
% Subwavelength resonances
%%%%%%%%%%%%%%%%%%%%%%%%%%%%%%%%%%%%%
\section{Subwavelength resonances}\label{sec: subwavelength resonances}
The one-dimensional problem \eqref{eq: system of coupled equations quasiperiodic} can be rewritten in terms of the Dirichlet-to-Neumann map as a set of coupled ordinary differential equations posed on the periodic segments $\crystal$:
\begin{equation}
    \label{eqn:DTN probelm}
    \left \{
    \begin{aligned}
        \left(\frac{\dd^2}{\dd x^2}+\frac{\omega^2}{v_i^2}\right) u(x,\alpha) &=0, & x\in D_i+L\Z,\\
        -\frac{\dd u}{\dd x}(x_i^{\iL}) &= \delta
        \mathcal{T}^{\frac{\omega}{v},\alpha}[u]_i^{\iL}  & \text{ for all } i\in\Z,\\
        \frac{\dd u}{\dd x}(x_i^{\iR}) &= \delta
        \mathcal{T}^{\frac{\omega}{v},\alpha}[u]_i^{\iR}  & \text{ for all } i\in\Z,\\
        u(x+L) &= u(x)e^{\i\alpha L}& \text{ for almost every }x\in\R,
    \end{aligned}
    \right.
\end{equation}
where for a function $u\in H^{1}_{\per,\alpha}(\R)$, we use the notation
$\mathcal{T}^{\frac{\omega}{v},\alpha}[u]\equiv
\mathcal{T}^{\frac{\omega}{v}}[(u(x_i^{\iLR}))_{i\in\Z}]$. 
\begin{definition}
    Any frequency $\omega^{\alpha}(\delta)$ such that \eqref{eqn:DTN probelm} admits a nontrivial solution $u$
is called a \emph{scattering resonance} \cite{zworski2017}. \emph{Subwavelength resonances} are those which in addition satisfy
\[
    \omega^{\alpha}(\delta)\rightarrow 0 \text{ as }\delta\rightarrow 0.
\] 
 The associated nontrivial solution $u^{\alpha}(\omega^{\alpha}(\delta),\delta)$ is called a \emph{subwavelength resonant mode}.
\end{definition}

\subsection{A first characterisation of subwavelength resonances based on an explicit representation of the solution}
\label{subsec:characterization}
Let us first state a characterisation of the subwavelength resonances which relies on a finite dimensional parametrisation of the solution $u$.
\begin{lemma}\label{lemma:A(ab)=0_means_scattering}
The subwavelength scattering resonances $\omega$ to
 the wave problem \eqref{eqn:DTN probelm} are the solution to the $2N\times 2N$ nonlinear eigenvalue problem 
    \begin{equation}
    \label{eqn:gzycv}
    \mathcal{A}^{\alpha}(\omega,\delta)\begin{pmatrix}
        a_i \\ b_i 
    \end{pmatrix}_{1\leq i\leq N}
    =0,
    \end{equation}
where $\mathcal{A}^{\alpha}(\omega,\delta)$ is the $2N\times 2N$ matrix given by
\begin{equation}
\label{eqn:ax3vi}
    \mathcal{A}(\omega,\delta):=   \i \diag\left(k_i\begin{pmatrix}
    -e^{\i k_i x_i^{\iL}} & e^{-\i k_i x_i^{\iL}} \\
    e^{\i k_i x_i^{\iR}} & -e^{-\i k_i x_i^{\iR}}
    \end{pmatrix}\right)_{1\leq i\leq N}
    -\delta \mathcal{T}^{\frac{\omega}{v},\alpha} \times \diag\left( \begin{pmatrix}
        e^{\i k_i x_i^{\iL}} & e^{-\i k_i x_i^{\iL}} \\
        e^{\i k_i x_i^{\iR}} & e^{-\i k_i x_i^{\iR}}
    \end{pmatrix} \right)_{1\leq i\leq N},
\end{equation}
and where $\mathcal{T}^{\frac{\omega}{v},\alpha}$ is the $2N\times 2N$ matrix defined by
\eqref{eqn:DTNexplicit}.
    Furthermore, resonant modes to \eqref{eqn:DTN probelm} correspond to 
    $(a_i\ b_i)^{T}_{1<i\leq N}$ by the formula
    \begin{equation}
    \label{eqn:wszun}
    u(x)=a_i e^{\i k_i x}+ b_i e^{-\i k_i x},\qquad \forall x\in (x_i^{\iL},x_i^{\iR}).
    \end{equation}
\end{lemma}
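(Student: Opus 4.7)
The strategy is to exploit the observation that inside each resonator the wave field must be a superposition of two plane waves, reducing the infinite-dimensional eigenvalue problem \eqref{eqn:DTN probelm} to a finite-dimensional one parametrised by the plane-wave coefficients $(a_i,b_i)_{1\<i\<N}$.

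First I would use the Helmholtz equation $u''+k_i^2 u=0$ on $D_i$ to write, on each resonator,
\begin{equation*}
u(x)=a_i e^{\i k_i x}+b_i e^{-\i k_i x}, \qquad x\in (x_i^{\iL},x_i^{\iR}),
\end{equation*}
which is exactly \eqref{eqn:wszun}. This parametrisation automatically accounts for the first equation of \eqref{eqn:DTN probelm}, and the quasiperiodicity condition $u(x+L)=e^{\i \alpha L}u(x)$ is already built into the Dirichlet-to-Neumann operator $\mathcal{T}^{\frac{\omega}{v},\alpha}$ constructed in \cref{prop:DTN}. Hence $(a_i,b_i)_{1\<i\<N}\in\C^{2N}$ completely parametrises the admissible candidate solutions, giving a $2N$-dimensional unknown.

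Next, I would impose the two remaining transmission conditions of \eqref{eqn:DTN probelm} at each endpoint. Differentiating the plane-wave ansatz yields
\begin{equation*}
-\frac{\dd u}{\dd x}(x_i^{\iL}) = \i k_i\bigl(-a_i e^{\i k_i x_i^{\iL}}+b_i e^{-\i k_i x_i^{\iL}}\bigr), \qquad \frac{\dd u}{\dd x}(x_i^{\iR}) = \i k_i\bigl(a_i e^{\i k_i x_i^{\iR}}-b_i e^{-\i k_i x_i^{\iR}}\bigr),
\end{equation*}
which, assembled over $1\<i\<N$, is precisely the block-diagonal matrix multiplying $(a_i,b_i)^T$ in the first term of \eqref{eqn:ax3vi}. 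On the right-hand side, the boundary trace of $u$ evaluated at the endpoints reads $(u(x_i^{\iL}),u(x_i^{\iR}))^T = \bigl(\begin{smallmatrix} e^{\i k_i x_i^{\iL}} & e^{-\i k_i x_i^{\iL}} \\ e^{\i k_i x_i^{\iR}} & e^{-\i k_i x_i^{\iR}} \end{smallmatrix}\bigr)(a_i,b_i)^T$, so applying $\mathcal{T}^{\frac{\omega}{v},\alpha}$ yields the second block-diagonal factor in \eqref{eqn:ax3vi}. Equating the two sides produces $\mathcal{A}^\alpha(\omega,\delta)(a_i,b_i)^T_{1\<i\<N}=0$.

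Finally, I would verify the equivalence in both directions: given a nontrivial solution $u$ of \eqref{eqn:DTN probelm} the coefficients $(a_i,b_i)$ extracted from \eqref{eqn:wszun} are not all zero (otherwise $u\equiv 0$ on $\crystal$ and hence, via the DTN problem \eqref{eqn:defDTN}, $u\equiv 0$ on $\R$) and solve \eqref{eqn:gzycv} by the computation above; conversely, given a nontrivial kernel vector of $\mathcal{A}^\alpha(\omega,\delta)$, the function defined by \eqref{eqn:wszun} on $\crystal$ together with the exterior extension $w_f^\alpha$ of \cref{def:DTN} yields a nontrivial solution of \eqref{eqn:DTN probelm}. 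I do not anticipate any genuine obstacle: the argument is an elementary rewriting, and the only subtle point is to observe that quasiperiodicity is entirely absorbed into $\mathcal{T}^{\frac{\omega}{v},\alpha}$, so that no extra boundary equations need to be imposed on the $(a_i,b_i)$.
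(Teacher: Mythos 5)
Your proposal is correct and follows essentially the same route as the paper's (very terse) proof: parametrise $u$ on each resonator by the plane-wave coefficients $(a_i,b_i)$, substitute into the two Dirichlet-to-Neumann boundary conditions of \eqref{eqn:DTN probelm}, and assemble the resulting linear relations into $\mathcal{A}^{\alpha}(\omega,\delta)(a_i\ b_i)^T=0$. The extra detail you supply --- the explicit verification that the two block-diagonal factors arise from the derivative and trace maps, and the check of nontriviality in both directions --- merely fills in steps the paper leaves implicit.
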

\begin{proof}
    Any solution $u$ to \eqref{eqn:DTN probelm} can be written as \eqref{eqn:wszun} and the boundary condition of \eqref{eqn:DTN probelm} reads 
\[
    \pm \i k_i ( a_i e^{\i k_i x_i^{\iLR}}-b_i e^{-\i k_i x_i^{\iLR}})-\delta
\mathcal{T}^{\frac{\omega}{v},\alpha}[u]^{\iLR}_i=0,
\] 
    which can be rewritten as \eqref{eqn:gzycv}.
\end{proof}
\begin{remark}
    With the characterisation of \Cref{lemma:A(ab)=0_means_scattering}, we reduce the spectral problem \eqref{eqn:DTN probelm} to a nonlinear finite-dimensional eigenvalue problem \eqref{eqn:gzycv}. We will exploit this property in the numerical computations. 
\end{remark}
\subsection{Characterisation of the subwavelength resonances based on the Dirichlet-to-Neumann map}
Multiplying by a test function $v\in H^{1}(D)$ and integrating on all the intervals
$(x_i^{\iL},x_i^{\iR})$, \eqref{eqn:DTN probelm} can be rewritten in the following weak form: find a nontrivial $u\in H^1(D)$ such that for any $v \in
H^1(D)$,
\begin{equation}
   \label{eqn:variational form}
    a^{\alpha}(u,v)=0,
\end{equation} 
where $a^{\alpha}$ is the bilinear form on $H^{1}(D)\times H^{1}(D)$ defined by
\[
    a^{\alpha}(u,v):=
\sum_{i=1}^N \int_{x_i^{\iL}}^{x_i^{\iR}} \left(\frac{\dd u}{\dd x} \frac{\dd \overline{v}}{\dd
       x} -\frac{\omega^2}{v_i^2} u \overline{v}\right) \dd x  - \sum_{i=1}^N \delta
       \left[\overline{v}(x_i^{\iR}) \mathcal{T}^{\frac{\omega}{v},\alpha}[u]_i^{\iR} +\overline{v}(x_i^{\iL})
       \mathcal{T}^{\frac{\omega}{v},\alpha}[u]_{i}^{\iLR}\right].
\] 
Following \cite{feppon.cheng.ea2022}, we introduce a new bilinear form $a^{\alpha}_{\omega,\delta}$ on $H^1(D) \times H^1(D)$:
\begin{multline}
    \label{eqn:def bilinear}
     a^{\alpha}_{\omega,\delta} (u,v) :=
     \sum_{i=1}^N \left[\int_{x_i^{\iL} }^{x_i^{\iR}} \frac{\dd u}{\dd x} \frac{\dd
     \overline{v}}{\dd x}\dd x
    + \int_{x_i^{\iL}}^{x_i^{\iR}} u \dd x \int_{x_i^{\iL}}^{x_i^{\iR}} \overline{v}\dd x\right] \\
    - \sum_{i=1}^{N}\left[
      \frac{\omega^{2}}{v_i^{2}} \int_{x_i^{\iL}}^{x_i^{\iR}} u \overline{v} \dd x  + \delta
     [\overline{v}(x_i^{\iR}) \mathcal{T}^{\frac{\omega}{v},\alpha}[u]_i^{\iR} + \overline{v}(x_i^{\iL})
     \mathcal{T}^{\frac{\omega}{v},\alpha}[u]_i^{\iL} ] \right].
\end{multline}
The bilinear form $a^{\alpha}_{\omega,\delta} (u,v)$ is obtained by adding the rank-one bilinear
forms $(u,v) \to \int_{x_i^{\iL}}^{x_i^{\iR}} u \dd x \int_{x_i^{\iL}}^{x_i^{\iR}}
\overline{v}\dd x$ to the bilinear form $a^{\alpha}$. Clearly, $a^{\alpha}_{\omega,\delta}$ is an
analytic perturbation in $\omega$ and $\delta$ of the bilinear form $a_{0,0}$ defined
by
\[ a_{0,0}(u,v) =  \sum_{i=1}^N \left[\int_{x_i^{\iL}}^{x_i^{\iR}} \frac{\dd u}{\dd x} \frac{\dd
\overline{v}}{\dd x}\dd x +  \int_{x_i^{\iL}}^{x_i^{\iR}} u \dd x \int_{x_i^{\iL}}^{x_i^{\iR}}
\overline{v}\dd x\right],\]
which is continuous coercive on $H^1(D)$. From standard perturbation theory, $a^{\alpha}_{\omega, \delta}$ remains coercive for sufficiently small complex values of $\omega$ and $\delta$. 

In order to characterise the subwavelength resonant modes, it is useful to introduce $h_j^{\alpha}(\omega,\delta)$ the solution to the variational problems
\begin{equation}
\label{eqn:9d5p8}
    a^{\alpha}_{\omega,\delta}(h_j^{\alpha}(\omega,\delta),v)=\int_{x_j^{\iL}}^{x_j^{\iR}}\bar v\dd x,\quad
    \forall v\in H^{1}(D), \qquad
   \forall 1\leq j\leq N.
\end{equation}
In the following lemma we show that the functions $h_j^{\alpha}(\omega,\delta)$ allow to reduce \eqref{eqn:variational form} to a finite dimensional $N\times N$ linear system by acting as basis functions.

%the $2N\times2N$ problem \eqref{eqn:gzycv} to a $N\times N$ matrix linear system, which is simpler to analyse. 

%{A solution to \eqref{eqn:9d5p8} yields a normalised solution on $D_i$ to \eqref{eqn:variational form}.}

\begin{lemma}\label{lemma: admits sol if I-Cx=0}
Let $\omega \in \C$ and $\delta \in \R$ belong to a neighbourhood of zero such that $a^{\alpha}_{\omega,\delta}$ is coercive. The variational problem \eqref{eqn:variational form} admits a nontrivial solution $u\equiv u(\omega,\delta)$ if and only if the $N\times N$ nonlinear eigenvalue problem
    \begin{equation}
    \label{eqn:lr8u3}
        (I-\exactC(\omega,\delta))\bm x = 0
    \end{equation}
    has a  solution $\omega$ and $\bm x:=(x_i(\omega,\delta))_{1\leq i\leq N}$, where $\exactC(\omega,\delta)$ is the matrix  given by
    \begin{equation}
    \label{eqn:hnkcq}
        \exactC(\omega,\delta)\equiv (\exactC(\omega,\delta)_{ij})_{1\leq i,j\leq N}:= 
        \left( \int_{x_i^{\iL}}^{x_i^{\iR}} h_j^{\alpha}(\omega,\delta)\dd x
        \right)_{1\leq i,j\leq N}.   \end{equation}
      When it is the case, $\omega$ is a subwavelength resonance and an associated resonant mode $u^{\alpha}(\omega,\delta)$ solution to \eqref{eqn:variational form} (equivalently, to \eqref{eq: system of coupled equations quasiperiodic} and \eqref{eqn:DTN probelm}) reads
 \begin{equation}
 \label{eqn:w0ion}
     u^{\alpha}(\omega,\delta) = \sum_{j=1}^{N} x_j(\omega,\delta)
     h_j^{\alpha}(\omega,\delta)
 \end{equation}
    with $h_j^{\alpha}(\omega,\delta)$ being defined by \eqref{eqn:9d5p8}.
\end{lemma}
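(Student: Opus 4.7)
The approach rests on the elementary identity
\[
a^{\alpha}_{\omega,\delta}(u,v) - a^{\alpha}(u,v) \;=\; \sum_{i=1}^{N} \int_{x_i^{\iL}}^{x_i^{\iR}} u\,\dd x \;\overline{\int_{x_i^{\iL}}^{x_i^{\iR}} v\,\dd x},
\]
which is directly visible from the definition \eqref{eqn:def bilinear}. It records that $a^{\alpha}_{\omega,\delta}$ is a rank-$N$ symmetric perturbation of $a^{\alpha}$, with the only new information being the $N$ cell averages $x_j(u):=\int_{x_j^{\iL}}^{x_j^{\iR}} u\,\dd x$. Under the coercivity hypothesis, Lax-Milgram makes each $h_j^{\alpha}(\omega,\delta)$ uniquely defined through \eqref{eqn:9d5p8} and endows $a^{\alpha}_{\omega,\delta}$ with a bounded inverse, which is exactly what lets us collapse the infinite-dimensional variational problem \eqref{eqn:variational form} to an $N\times N$ linear system for the vector $\bm x$.

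For the forward implication, assume $u$ is a nontrivial solution of \eqref{eqn:variational form}. The identity above rewrites the equation as
\[
a^{\alpha}_{\omega,\delta}(u,v) \;=\; \sum_{j=1}^{N} x_j(u)\overline{\int_{x_j^{\iL}}^{x_j^{\iR}}v\,\dd x} \;=\; \sum_{j=1}^{N} x_j(u)\, a^{\alpha}_{\omega,\delta}\!\left(h_j^{\alpha}(\omega,\delta),v\right),
\]
so coercivity of $a^{\alpha}_{\omega,\delta}$ forces the representation $u=\sum_{j} x_j(u)\,h_j^{\alpha}(\omega,\delta)$, which is precisely \eqref{eqn:w0ion}. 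Integrating this representation over the $i$-th resonator then yields $x_i(u) = \sum_{j}\exactC(\omega,\delta)_{ij}\, x_j(u)$, that is $(I-\exactC(\omega,\delta))\bm x = 0$ with $\bm x := (x_j(u))_{1\<j\<N}$. Nontriviality of $u$ immediately implies $\bm x\neq 0$, since $\bm x=0$ would give $u=0$ through the representation.

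For the converse, given $\bm x\neq 0$ with $(I-\exactC(\omega,\delta))\bm x = 0$, I set $u:=\sum_{j} x_j\, h_j^{\alpha}(\omega,\delta)$ and integrate over the $i$-th resonator to get $\int_{x_i^{\iL}}^{x_i^{\iR}} u\,\dd x = (\exactC\,\bm x)_i = x_i$. Plugging this into the identity above shows that the rank-$N$ correction cancels exactly against the right-hand sides of \eqref{eqn:9d5p8}, leaving $a^{\alpha}(u,v)=0$ for every $v\in H^{1}(D)$. The function $u$ cannot be identically zero, since otherwise each cell average $x_j=\int_{x_j^{\iL}}^{x_j^{\iR}} u\,\dd x$ would vanish, contradicting $\bm x\neq 0$.

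The only subtle point in the whole argument is book-keeping: one must check that the rank-$N$ perturbation $a^{\alpha}_{\omega,\delta}-a^{\alpha}$ is reproduced \emph{exactly} by the matrix of cell averages $\exactC(\omega,\delta)=(\int_{x_i^{\iL}}^{x_i^{\iR}} h_j^{\alpha}\,\dd x)_{ij}$ with the correct index convention, and that the equivalence "$u\neq 0$" $\Longleftrightarrow$ "$\bm x\neq 0$" holds in both directions. The substantive analytic input --- coercivity of $a^{\alpha}_{\omega,\delta}$ for sufficiently small $(\omega,\delta)$ --- is already taken as a hypothesis, so no further perturbative or spectral argument is needed.
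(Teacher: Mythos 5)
Your proof is correct and follows essentially the same route as the paper: rewrite $a^{\alpha}$ as $a^{\alpha}_{\omega,\delta}$ minus the rank-$N$ perturbation of cell averages, use coercivity (Lax--Milgram) to force the representation $u=\sum_j x_j h_j^{\alpha}$, and integrate over each resonator to obtain $(I-\exactC)\bm x=0$. The only difference is that you spell out the nontriviality equivalence $u\neq 0 \Leftrightarrow \bm x\neq 0$ in both directions, which the paper leaves implicit.
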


\begin{proof}
    The variational problem \eqref{eqn:variational form} reads equivalently
    \begin{equation}
    \label{eqn:q48jq}
\begin{aligned}
       a^{\alpha}(u,v)=0        & \Leftrightarrow 
    a^{\alpha}_{\omega,\delta}(u,v)-\sum_{i=1}^{N} \left(\int_{x_i^{\iL}}^{x_i^{\iR}}u\dd x\right)
       a^{\alpha}_{\omega,\delta}(u_i^{\alpha},v)=0
        \\
    &        \Leftrightarrow 
        u-\sum_{i=1}^{N}\left(\int_{x_i^{\iL}}^{x_i^{\iR}}u\dd x
        \right)u_i^{\alpha}=0.
\end{aligned}
    \end{equation}
    By integrating both sides of \eqref{eqn:q48jq} on $(x_i^{\iL},x_i^{\iR})$, we find that the vector $\bm x:=\left(\int_{x_i^{\iL}}^{x_i^{\iR}} u(\omega,\delta)\dd x\right)_{1\leq i\leq N}$ solves the linear system
    \[ \int_{x_i^{\iL}}^{x_i^{\iR}} u(\omega,\delta) 
    \dd x -  \sum_{j=1}^N  \int_{x_i^{\iL}}^{x_i^{\iR}} h_j^{\alpha}(\omega,\delta) \dd x
    \int_{x_j^{\iL}}^{x_j^{\iR}} u(\omega,\delta) \dd x
    =0,\qquad 1\leq i\leq N,
    \]
    which is exactly \eqref{eqn:lr8u3}. Conversely, if \eqref{eqn:lr8u3} has a solution, then the second line of \eqref{eqn:q48jq} shows that the solution to \eqref{eqn:variational form} is given by \eqref{eqn:w0ion}.
\end{proof}
Subwavelength resonances are therefore the characteristic values $\omega\equiv\omega(\delta)$ for which $I-\exactC(\omega,\delta)$ is not invertible.  
\subsection{Asymptotic expansions of the subwavelength resonances}
\label{subsec:asympt}
We now show the existence of $N$ subwavelength resonances for any $\alpha\in Y^*$ and we compute their leading-order asymptotic expansions in terms of $\delta$.  We start by computing explicit asymptotic expansions of the functions $h_j^{\alpha}(\omega,\delta)$ solutions to \eqref{eqn:9d5p8}.
Here and hereafter, the characteristic function of a set $S$ is written as $\mathds{1}_{S}$.%:
% \[
%     \mathds{1}_{S}(t)=\left\{\begin{aligned}
%         1&\text{ if }t\in S,\\
%         0&\text{ otherwise}.
%     \end{aligned}\right.
% \]  
\begin{proposition}\label{prop: asymptotic of uj}
   \label{prop: expansion of Id functions}
    Let $\omega \in \C$ and $\delta \in \R$ belong to a small enough neighbourhood of zero. The unique solution $h_j(\omega, \delta)$ with $1\leq j\leq N$ to the variational problem \eqref{eqn:9d5p8} has the following asymptotic behaviour as $\omega,\delta \to 0$:
   \begin{equation}
   \label{eqn:xxc8r}
    \begin{aligned}
        h_j(\omega,\delta)&= 
    \left(\frac{1}{\ell_j}+\frac{\omega^{2}}{v_j^{2}\ell_j^{2}}\right)\mathds{1}_{(x_j^{\iL},x_j^{\iR})}
        \\
        & 
    +
        \delta\left[\frac{\mathds{1}_{\{2,\dots, N\}}(j)}{\ell_{j-1}^{2}\ell_j}\frac{1}{s_{ j-1 }}\mathds{1}_{(x_{j-1}^{\iL},x_{j-1}^{\iR})}
        -\frac{1}{\ell_j^{3}}\left(\frac{1}{s_{ j-1 }}+\frac{1}{s_{j }}\right)\mathds{1}_{(x_j^{\iL},x_j^{\iR})}
        +\frac{\mathds{1}_{\{1,\dots,N-1\}}(j)}{\ell_j\ell_{j+1}^{2}s_{j }}\mathds{1}_{(x_{j+1}^{\iL},x_{j+1}^{\iR})}
        \right.\\
        %\left.
        &\left.+\vphantom{\frac{\mathds{1}_{\{2,\dots, N\}}(j)}{\ell_{j-1}^{2}\ell_j}}
        \frac{\delta_{1j}}{\ell_N^{2}\ell_1}\frac{e^{-\i \alpha L}}{s_{N}}\mathds{1}_{(x_N^{\iL},x_N^{\iR})}
        +\frac{\delta_{Nj}}{\ell_1^{2}\ell_N}\frac{e^{\i\alpha
        L}}{s_{N}}1_{(x_1^{\iL},x_1^{\iR})}+\widetilde{h}_{j,0,1}\right]
       % + \frac{\i
       % \omega\delta}{\ell_j^{3}v}\left(\delta_{j1}+\delta_{jN}\right)\left(1_{(x_j^{-},x_j^{+})}+\widetilde{u}_{j,1,1}\right)
                +O((\omega^{2}+\delta)^{2}),
    \end{aligned}
   \end{equation}
    where $\widetilde{h}_{j,0,1}$ is some (quadratic) functions satisfying 
    \[
        \int_{x_i^{-}}^{x_i^{+}}\widetilde{h}_{j,0,1}\dd x = 0, \qquad \forall
        1\leq i\leq N.
    \] 
\end{proposition}

\begin{proof}
    From the definition of $a^{\alpha}_{\omega, \delta}$, the function $h_j^{\alpha}\equiv
    h_j^{\alpha}(\omega,\delta)$ satisfies the
    following differential equation written in strong form:
   \begin{equation}
   \label{eqn:ID functions}
    \left\{
    \begin{aligned}
     -\frac{\dd^2}{\dd x^2}h_j^{\alpha} - \frac{\omega^2}{v_b^2} h_j^{\alpha} + \sum_{i=1}^{N}
        \left(\int_{x_i^{\iL}}^{x_i^{\iR}} h_j^{\alpha} \dd x\right) \mathds{1}_{(x_i^{\iL},x_i^{\iR})} &= 
        \mathds{1}_{(x_j^{\iL},x_j^{\iR})} &  \text{ in }
        \bigsqcup_{i=1}^{N}(x_i^{\iL},x_i^{\iR}),\\
        -\frac{\dd h_j^{\alpha}}{\dd x}(x_i^{\iL}) &= \delta
        \mathcal{T}^{\frac{\omega}{v},\alpha}[h_j^{\alpha}]_i^{\iL} & \text{ for all } 1 \leq i \leq
        N,\\
        \\
        \frac{\dd h_j^{\alpha}}{\dd x}(x_i^{\iR}) &= \delta
        \mathcal{T}^{\frac{\omega}{v},\alpha}[h_j^{\alpha}]_i^{\iR} & \text{ for all } 1 \leq i \leq
        N.\\
    \end{aligned}
    \right.
    \end{equation}
    Since $\mathcal{T}^{\frac{\omega}{v},\alpha}$ is analytic in $\omega^{2}$, it
    follows that
    $h_j^{\alpha}(\omega,\delta)$
    is analytic in $\omega^{2}$ and $\delta$: there exist functions $(h_{j,2p,k})_{p{\geq}0,
    k{\geq}0}$ such that $h_j^{\alpha}(\omega,\delta)$ can be written as 
    the following convergent series in $H^1(D)$:
    \begin{equation}
        h_j^{\alpha}(\omega,\delta) = \sum_{p,k=0}^{+\infty} \omega^{2p}\delta^k
        h_{j,2p,k}.
    \end{equation}
    By using \Cref{prop:expansion DTN} and identifying powers of $\omega$ and
    $\delta$, we obtain the following equations characterizing the functions
    $(h_{j,2p,k})_{p{\geq}0, k{\geq}0}$:
   \begin{align}
   \begin{dcases}
       -\frac{\dd^2}{\dd x^2}h_{j,2p,k}  + \sum_{i=1}^{N}\left(\int_{x_i^{\iL}}^{x_i^{\iR}}
        h_{j,2p,k} \dd x\right)
    \mathds{1}_{(x_i^{\iL},x_i^{\iR})} =  \frac{1}{v_j^2} h_{j,2p-2,k} +
        \mathds{1}_{(x_j^{\iL},x_j^{\iR})}  \delta_{0p} \delta_{0k}  & \text{ in }
      D,
        \\ -\frac{\dd
        h_{j,2p,k}}{\dd x}(x_i^{\iL}) = \sum_{n=0}^p \frac{1}{v^{2n}}
        \mathcal{T}_{2n}^{\alpha}[h_{j,2p-2n,k-1}]_i^{\iL}, &1 \leq i \leq N,\\
        \frac{\dd
        h_{j,2p,k}}{\dd x}(x_i^{\iR}) = \sum_{n=0}^p \frac{1}{v^{2n}}
        \mathcal{T}_{2n}^{\alpha}[h_{j,2p-2n,k-1}]_i^{\iR}, &1 \leq i \leq N,\\
        \end{dcases}
   \end{align}    
   with the convention  that $h_{j,2p,k}=0$ for negative indices $p$ and $k$. 
    It can then be easily obtained by induction that 
    \[ h_{j,2p,0} = \frac{\mathds{1}_{(x_j^{\iL},x_j^{\iR})}}{v_j^{2p} \ell_j^{p+1}} \text{ for
    any } p \geq 0,\qquad 1\leq j\leq N. \]
   Then, for $p=0$ and $k=1$, we find that $h_{j,0,1}$ satisfies
   \begin{equation}
   \label{eqn:e5jdc}\left\{
   \begin{aligned}
     -\frac{\dd^2}{\dd x^2}h_{j,0,1}  +\sum_{i=1}^{N} \left(\int_{x_i^{\iL}}^{x_i^{\iR}}
       h_{j,0,1} \dd x\right) \mathds{1}_{(x_i^{\iL},x_i^{\iR})} &= 0    & \text{ in }
       D,\\
    -\frac{\dd h_{j,0,1}}{\dd x}(x_i^{\iL}) &= \mathcal{T}_0^{\alpha}[h_{j,0,0}]_i^{\iL} & \text{ for all } 1 \leq i \leq N, \\
    \frac{\dd h_{j,0,1}}{\dd x}(x_i^{\iR}) &= \mathcal{T}_0^{\alpha}[h_{j,0,0}]_i^{\iR} & \text{ for all } 1 \leq i \leq N. \\
    \end{aligned}
    \right.
   \end{equation}
    From \eqref{eqn:4mxjs} with
    $f_{i}^{\iLR}:=h_{j,0,0}(x_i^{\iLR})=\delta_{ij}/\ell_j$ for $1\leq i\leq N$, we obtain
    \[
        \left\{\begin{aligned}
            \mathcal{T}_0[h_{j,0,0}]_1^{\iL}
            &=-\frac{1}{\ell_j}\frac{1}{s_{N}}(\delta_{1j}-\delta_{Nj}e^{-\i
            \alpha L}), &&\\
            \mathcal{T}_0[h_{j,0,0}]_i^{\iL} &
            =-\frac{1}{\ell_j}\frac{1}{s_{ i-1 }}\left(\delta_{ij}-\delta_{(i-1)j}\right)
            &&\text{
        for }2\leq i\leq N,\\
    \mathcal{T}_0[h_{j,0,0}]_i^{\iR}& =\frac{1}{\ell_j}\frac{1}{s_{i}}\left(\delta_{(i+1)j}-\delta_{ij}\right)
            &&            \text{ for }1\leq i\leq N-1,\\
            \mathcal{T}_0[h_{j,0,0}]_N^{\iR}
            &=\frac{1}{\ell_j}\frac{1}{s_{N}}\left( e^{\i \alpha
            L}\delta_{1j}-\delta_{Nj} \right) .&& 
        \end{aligned}\right.
    \]
    Here and in the rest of the text $\delta_{ij}$ is Kronecker delta. Multiplying \eqref{eqn:e5jdc} by $\mathds{1}_{(x_i^{\iL},x_i^{\iR})}$ and integrating by parts,
    we find that
    \begin{align*}
        \int_{x_i^{\iL}}^{x_i^{\iR}}h_{j,0,1}\dd x  & =\frac{1}{\ell_i }\left[ 
        \mathcal{T}_0^{\alpha}[h_{j,0,0}]_i^{\iL}+\mathcal{T}_0^{\alpha}[h_{j,0,0}]_i^{\iR}\right]
        \\
     & =\frac{1}{\ell_i\ell_j}
            \frac{1}{s_{i-1}}(\delta_{(i-1)j}-\delta_{ij}) \mathds{1}_{\{2,\dots,N\}}(i)
        +\frac{1}{\ell_i\ell_j}
        \frac{1}{s_{i}}(\delta_{(i+1)j}-\delta_{ij})\mathds{1}_{\{1,\dots,N-1\}}(i) \\
            & \quad 
            +\frac{1}{\ell_i\ell_j}\frac{1}{s_{N}}\left(\delta_{Nj}e^{-\i\alpha
            L}-\delta_{1j}\right)\delta_{i1}
            +\frac{1}{\ell_i\ell_j}\frac{1}{s_{N}}\left( e^{\i\alpha
            L}\delta_{1j}-\delta_{Nj} \right) \delta_{iN}.
    \end{align*}
    Isolating the different cases yields
    \begin{align*}
        \begin{dcases}
            \frac{1}{\ell_1\ell_N}\frac{1}{s_{N}}e^{-\i\alpha L} & \text{
            if }i=1,\,j=N,\\
            \frac{1}{\ell_{j-1}\ell_j}\frac{1}{s_{j-1}} & \text{ if
            }i=j-1,\,2\leq j\leq N,\\
            -\frac{1}{\ell_j^{2}}\left(\frac{\mathds{1}_{\{2,\dots,N\}}(j)}{s_{j-1}}
            +\frac{\mathds{1}_{\{1,\dots,N-1\}}(j)}{s_{j}}+\frac{\delta_{1j}+\delta_{jN}}{s_{N}}\right) & \text{ if }i=j,\\
            \frac{1}{\ell_j\ell_{j+1}}\frac{1}{s_{j}} & \text{ if
            }i=j+1,\,1\leq j\leq N-1,\\
            \frac{1}{\ell_1\ell_N}\frac{1}{s_{N}}e^{\i\alpha L} & \text{
            if }i=N,\,j=1.
        \end{dcases}
    \end{align*}
    Using Fredholm's alternative, this allows to infer that $h_{j,0,1}$ can be
    written as
\begin{multline}
h_{j,0,1}=
    \frac{\mathds{1}_{\{2,\dots,N\}}(j)}{\ell_{j-1}^{2}\ell_j}\frac{1}{s_{j-1}}\mathds{1}_{(x_{j-1}^{\iL},x_{j-1}^{\iR})}
        -\frac{1}{\ell_j^{3}}\left(\frac{1}{s_{j-1}}+\frac{1}{s_{j}}\right)\mathds{1}_{(x_j^{\iL},x_j^{\iR})}
        +\frac{\mathds{1}_{\{1,\dots,N-1\}}(j)}{\ell_j\ell_{j+1}^{2}}\frac{1}{s_{j}}
        \mathds{1}_{(x_{j+1}^{\iL},x_{j+1}^{\iR})}  \\
        +\frac{\delta_{1j}}{\ell_N^{2}\ell_1}\frac{e^{-\i \alpha L}}{s_{N}}\mathds{1}_{(x_N^{\iL},x_N^{\iR})}
        +\frac{\delta_{Nj}}{\ell_1^{2}\ell_N}\frac{e^{\i\alpha
        L}}{s_{N}}\mathds{1}_{(x_1^{\iL},x_1^{\iR})}+\widetilde{h}_{j,0,1},
\end{multline}
    where $\widetilde{h}_{j,0,1}$ is a function (in fact, a second order polynomial) satisfying $\int_{x_i^{\iL}}^{x_i^{\iR}}\widetilde{h}_{j,0,1}\dd x=0$ for any $1\leq i\leq N$, with the convention $s_{0}=s_{N}$. Furthermore, $\widetilde{h}_{j,0,1}$ is identically zero on $(x_i^{\iL},x_i^{\iR})$, where $i\notin \{j-1,j,j+1\}$.
%
%    Finally, let us compute $u_{j,2,1}$, which satisfies
%\begin{equation}
%\label{eqn:2kokn} \left\{
%    \begin{aligned}
%     -\frac{\D^2}{\D x^2}u_{j,1,1}  + \sum_{i=1}^{N}\left(\int_{x_i^-}^{x_i^+} u_{j,1,1} \D x\right)
%        1_{(x_i^{-},x_i^{+})} &=  0  & \text{ in }
%      \bigsqcup_{i=1}^{N}  (x_i^{-},x_i^{+}),
%        \\ \pm\frac{\D
%        u_{j,1,1}}{\D x}(x_i^\pm) &= \frac{1}{v}
%        \mathcal{T}_1[u_{j,0,0}]_i^{\pm} & \text{ for all } 1 \leq i \leq N.
%    \end{aligned}
%    \right.
%\end{equation}
%Similarly, we find that $\mathcal{T}_1[u_{j,0,0}]_i^{\pm}$ is given by
%\[
%    \mathcal{T}_1[u_{j,0,0}]_i^{-} = \left\{\begin{aligned}
%        \frac{\i}{\ell_j} \delta_{j1} \text{ if }i=1,\\
%        0 \text{ if }i{\geq}2,
%    \end{aligned}\right.
%    \quad
%    \mathcal{T}_1[u_{j,0,0}]_i^{+} = \left\{\begin{aligned}
%        0 \text{ if }i\leq N-1,\\
%        \frac{\i}{\ell_j} \delta_{jN} \text{ if }i=N.\\
%    \end{aligned}\right.
%\] 
%Consequently, multiplying \eqref{eqn:2kokn} by $1_{(x_i^{-},x_i^{+})}$ and integrating
%by parts yields
%\[
%    \int_{x_i^{-}}^{x_i^{+}} u_{j,1,1}\D x=\frac{\i}{\ell_i \ell_j v}\left(
%    \delta_{i1}\delta_{j1}+\delta_{iN}\delta_{jN} \right).
%\] 
%This implies  that
%$u_{j,1,1}=\frac{\i}{\ell_j^{3}v
%}(\delta_{j1}+\delta_{jN})1_{(x_j^{-},x_j^{+})}+\widetilde{u}_{j,1,1}$, 
%where $\widetilde{u}_{j,1,1}$ is a function satisfying $\int_{x_i^{-}}^{x_i^{+}}
%\widetilde{u}_{j,1,1}\D x=0$ for any $1\leq i\leq N$. The result follows.
%
\end{proof}

%It is well know that the band structure and eigenmodes --- and generally the entire problem --- can be approximated using a capacitance matrix formulation.
Next, we define the (quasiperiodic) capacitance matrix similar to the three-dimensional case \cite{ammari.davies.ea2020, ammari.davies.ea2021, ammari.davies.ea2022Exceptional}.
\begin{definition}[Quasiperiodic capacitance matrix]\label{def: Capacitance matrix}
    Consider the solutions $V_i^\alpha : \R\to \R$ of the problem
    \begin{align}
        \begin{dcases}
            -\frac{\dd^2}{\dd x^2} V_i^\alpha =0 & \R \setminus \crystal, \\
            V_i^\alpha(x)=\delta_{ij} & x\in D_j ,\\
            V_i^\alpha(x+mL) = e^{\i\alpha mL}V_i^\alpha(x) & m\in\Z;
        \end{dcases}
        \label{eq: def V_i}
    \end{align}
    {for $1\leq i,j\leq N$}. Then the capacitance matrix is defined coefficient-wise by
    \begin{align*}
        \capmati{ij} = -\int_{\partial D_i}\frac{\partial V_j^\alpha}{\partial \nu}\dd \sigma, 
    \end{align*}
    where $\nu$ is the outward-pointing normal.
\end{definition}
\begin{lemma}
    The capacitance matrix is given by
    \begin{align*}
        \capmati{ij}\coloneqq
    -\frac{1}{s_{ j-1 }}\delta_{i(j-1)}+\left(
    \frac{1}{s_{ j-1 }}+\frac{1}{s_{j }} \right)\delta_{ij}
    -\frac{1}{s_{j }}\delta_{i(j+1)} -\delta_{1j}\delta_{iN}\frac{e^{-\i
    \alpha L}}{s_{N }}-\delta_{1i}\delta_{jN}\frac{e^{\i
    \alpha L}}{s_{N }},
    \end{align*}
    that is,
    \begin{align*}
    \capmat = \begin{pmatrix}
        \frac{1}{s_N} + \frac{1}{s_1} & -\frac{1}{s_1} & & &  -\frac{e^{-\i \alpha L}}{s_N}\\
        -\frac{1}{s_1} & \frac{1}{s_1} + \frac{1}{s_2} & -\frac{1}{s_2} &\\
        & \ddots& \ddots&\ddots & \\
        & & \ddots& \ddots&  - \frac{1}{s_{N-1}} \\
        -\frac{e^{\i \alpha L}}{s_N}& & & - \frac{1}{s_{N-1}}& \frac{1}{s_{N-1}} + \frac{1}{s_N}
    \end{pmatrix}.
\end{align*}
\end{lemma}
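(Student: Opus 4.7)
The plan is to construct each $V_j^{\alpha}$ explicitly by solving \eqref{eq: def V_i} and then read off $\capmati{ij}$ from the one-sided derivatives of $V_j^{\alpha}$ at the endpoints of $D_i$. The key observation is that $-\dd^2 V_j^{\alpha}/\dd x^2 = 0$ on each gap, so $V_j^{\alpha}$ is affine there. Combined with the Dirichlet data $V_j^{\alpha} \equiv \delta_{ij}$ on each resonator, this fully determines $V_j^{\alpha}$ on a period.

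Concretely, on the interior gap $(x_i^{\iR}, x_{i+1}^{\iL})$ of length $s_i$ for $1 \leq i \leq N-1$, the slope is $(\delta_{(i+1)j} - \delta_{ij})/s_i$. For the wrap-around gap $(x_N^{\iR} - L, x_1^{\iL})$ of length $s_N$ adjacent to $D_1$ on the left, I would invoke the quasiperiodic condition to write $V_j^{\alpha}(x_N^{\iR} - L) = e^{-\i \alpha L} V_j^{\alpha}(x_N^{\iR}) = e^{-\i \alpha L} \delta_{Nj}$, whence the slope is $(\delta_{1j} - e^{-\i \alpha L}\delta_{Nj})/s_N$. Symmetrically, the slope on $(x_N^{\iR}, x_1^{\iL} + L)$, adjacent to $D_N$ on the right, is $(e^{\i \alpha L}\delta_{1j} - \delta_{Nj})/s_N$.

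Next, in one dimension the outward normal at $\partial D_i$ is $-\hat x$ at $x_i^{\iL}$ and $+\hat x$ at $x_i^{\iR}$. Since $V_j^{\alpha}$ is constant inside $D_i$, the normal derivatives in the definition of the capacitance must be taken as the one-sided limits from \emph{outside}, giving
\[
    \capmati{ij} = -\int_{\partial D_i} \frac{\partial V_j^{\alpha}}{\partial \nu} \dd \sigma = \left.\frac{\dd V_j^{\alpha}}{\dd x}\right\vert_{\iL}\!(x_i^{\iL}) - \left.\frac{\dd V_j^{\alpha}}{\dd x}\right\vert_{\iR}\!(x_i^{\iR}).
\]
Substituting the slopes computed above then yields, for $2 \leq i \leq N-1$, the tridiagonal contribution $-\delta_{(i-1)j}/s_{i-1} + (1/s_{i-1} + 1/s_i)\delta_{ij} - \delta_{(i+1)j}/s_i$, while the rows $i=1$ and $i=N$ pick up the additional corner entries $-e^{-\i \alpha L}/s_N$ at position $(1,N)$ and $-e^{\i \alpha L}/s_N$ at position $(N,1)$, which matches the claimed matrix.

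There is no real analytic difficulty; the work is purely bookkeeping. The only points requiring care are (i) remembering that the derivative at $\partial D_i$ must be taken from outside $D_i$, where $V_j^{\alpha}$ is genuinely affine, rather than from inside, where it is constant, and (ii) keeping track of the direction ($+L$ versus $-L$) of the quasiperiodic shift, so that the correct sign of $\i\alpha L$ appears in each of the two corner entries.
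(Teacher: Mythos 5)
Your proposal is correct and follows essentially the same route as the paper: construct the piecewise affine quasiperiodic solutions $V_j^\alpha$ explicitly on each gap (including the wrap-around gap, where the factor $e^{\pm\i\alpha L}$ enters), then read off $\capmati{ij}$ from the one-sided exterior derivatives at $x_i^{\iL}$ and $x_i^{\iR}$ with the outward-normal sign convention. The two points of care you flag (taking the derivative from outside the resonator, and tracking the direction of the quasiperiodic shift) are exactly the content of the paper's short verification.
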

\begin{proof}
    One notices that the solutions $V_i^\alpha$ to \eqref{eq: def V_i} for $1\leq i\leq N-1$ are given by
    \begin{align*}
        V_i(x)=\begin{dcases}
            \frac{1}{s_{i-1}}(x-x_i^{\iL}), & x_{i-1}^{\iR}\leq x \leq x_i^{\iL},\\
            1, & x_i^{\iL}\leq x \leq x_i^{\iR},\\
            -\frac{1}{s_{i}}(x-x_{i+1}^{\iL}), & x_{i}^{\iR}\leq x \leq x_{i+1}^{\iL},\\
            0&\text{else},
        \end{dcases}
    \end{align*}
while for bigger and smaller $i$ we multiply by the corresponding $e^{\i\alpha mL}$ factor. Derivation with respect to the outward-pointing normal and integrating on the boundary just means 
\begin{align}\label{eq: compact formula def cap mat}
    \capmati{ij} = - \left(-\left.\frac{\dd V_i^\alpha}{\dd x}\right\vert_{\iL}(x_j^{\iL}) + \left.\frac{\dd V_i^\alpha}{\dd x}\right\vert_{\iR}(x_j^{\iR})\right).
\end{align}
Evaluating \eqref{eq: compact formula def cap mat} concludes the proof.
\end{proof}

\begin{corollary}\label{cor: asympotic expansion C}
    We have the following asymptotic expansion for the matrix $\exactC(\omega,\delta)$ defined in
     \eqref{eqn:hnkcq}:
    \begin{equation}
     \label{eqn:expansion of A}   
      \exactC(\omega, \delta) =I + \omega^2 V^{-2} L^{-1} - \delta  L^{-1}\capmat L^{-1} 
 + O((\omega^2 +\delta)^2),
 \end{equation}
    where $L$ is the length matrix $L\coloneqq\diag((\ell_i))$ and $V\coloneqq\diag((v_i))$ the material parameter matrix.
\end{corollary}

\begin{proof}
    Integrating the asymptotic expansion \eqref{eqn:xxc8r} of $h_j(\omega,\delta)$ on the interval $(x_i^{\iL},x_i^{\iR})$, we obtain
   \begin{equation}
    \begin{aligned}
        \capmati{ij}(\omega,\delta) =& 
    \left(1+\frac{\omega^{2}}{v_i^{2}\ell_i}\right)\delta_{ij}
        \\ +
        &\delta\left[\frac{\mathds{1}_{\{1,\dots,N-1\}}(i)}{\ell_{i}\ell_j}\frac{1}{s_{ j-1 }}\delta_{i(j-1)}
        -\frac{1}{\ell_i\ell_j}\left(\frac{1}{s_{ j-1 }}+\frac{1}{s_{j }}\right)\delta_{ij}
        +\frac{\mathds{1}_{\{2,\dots,N\}}(i)}{\ell_j\ell_{i}}\frac{1}{s_{j }}
        \delta_{i(j+1)} \right. \\
        &
        \left.+\frac{\delta_{1j}\delta_{iN}}{\ell_N\ell_1}\frac{e^{-\i \alpha L}}{s_{N}}
        +\frac{\delta_{Nj}\delta_{1N}}{\ell_1\ell_N}\frac{e^{\i\alpha
        L}}{s_{N }}\right]
       % + \frac{\i
       % \omega\delta}{\ell_j^{3}v}\left(\delta_{j1}+\delta_{jN}\right)\left(1_{(x_j^{-},x_j^{+})}+\widetilde{u}_{j,1,1}\right)
                +\BO((\omega^{2}+\delta)^{2}).
    \end{aligned}
   \end{equation}
        This yields the result.
\end{proof}
It is thus useful to introduce the \emph{generalised capacitance matrix}
\begin{align}
    \label{eq: def generalised capacitance matrix}
    \gencapmat\coloneqq V^2L\inv\capmat.
\end{align}
\begin{proposition}
   \label{prop: asymptotic expansion of freqs via eigenvals}
    Assume that the eigenvalues of $\gencapmat$ are simple. Then the $N$ subwavelength band functions
    $(\alpha \mapsto \omega^{\alpha}_i)_{1\leq i\leq N}$ satisfy to the first order
    \begin{align*}
        \omega_i^{\alpha} =  \pm \sqrt{\delta\lambda_i} + \BO(\delta),
    \end{align*}
    where $(\lambda_i^{\alpha})_{1\leq i\leq N}$ are the eigenvalues of the eigenvalue problem
\begin{equation}
\label{eq:eigevalue problem capacitance matrix}
\gencapmat\bm a_i = \lambda_i^{\alpha}\bm a_i,\qquad 1\leq i\leq N.
\end{equation}
We select the $N$ values of $\pm\sqrt{\delta\lambda_i}$ having positive real parts.
 \end{proposition}
 Remark that in the Hermitian case it is possible to reformulate \eqref{eq:eigevalue problem capacitance matrix} into a symmetric eigenvalue problem so that the eigenvalues are real.
 \begin{proof}
 From \Cref{lemma: admits sol if I-Cx=0}, we know that \eqref{eqn:DTN probelm} has a solution if and only if 
 \begin{align*}
     (I-\exactC(\omega,\delta))\bm x = 0
 \end{align*}
 for some nonzero $\bm x$. Applying the asymptotic expansion from \Cref{cor: asympotic expansion C}, we obtain that the above equation is equivalent to
 \begin{align*}
     &0=\omega^2V^{-2}L\inv\bm x - \delta L\inv \capmat \underbrace{L\inv\bm x}_{\coloneqq \bm y} + \BO((\omega^{2}+\delta)^{2})\\
     &\Leftrightarrow V^2L\inv \capmat \bm y = \frac{\omega^2}{\delta}\bm y + \BO((\omega^{2}+\delta)^{2}),
 \end{align*}
     meaning that $\frac{\omega^2}{\delta}$ must be approximately an eigenvalue of $V^2L\inv \capmat=\gencapmat$.
 \end{proof}
 We refer to \cite[Proposition 3.7]{feppon.ammari2022} for a generalisation of \Cref{prop: asymptotic expansion of freqs via eigenvals}. The capacitance matrix provides also an approximation of the eigenmodes.
 \begin{lemma}\label{lemma: approx of eigemodes via eigenvectors}
     Let $u^\alpha$ be a subwavelength resonant eigenmode corresponding to $\omega^\alpha$ from \Cref{prop: asymptotic expansion of freqs via eigenvals}. Let $\bm a$ be the corresponding eigenvector of the generalised capacitance matrix. Then
     \begin{align*}
         u^\alpha(x) = \sum_j \bm a^{(j)}V_j^\alpha(x) + \BO(\delta),
     \end{align*}
     where $V_j^\alpha$ are the functions from \eqref{eq: def V_i} in \Cref{def: Capacitance matrix} and $\bm a^{(j)}$ denotes the $j$-th entry of the eigenvector.
 \end{lemma}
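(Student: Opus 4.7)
The plan is to combine the representation formula of Lemma \ref{lemma: admits sol if I-Cx=0}, which writes $u^\alpha = \sum_{j=1}^N x_j h_j^\alpha$ with $\bm x$ in the kernel of $I - \exactC(\omega^\alpha,\delta)$, with the leading-order asymptotics of $h_j^\alpha$ from Proposition \ref{prop: expansion of Id functions}, and to identify $\bm x$ with $L\bm a$ where $\bm a$ is the eigenvector of $\gencapmat$. The first step is to make this identification precise: tracing through the substitution $\bm y := L^{-1}\bm x$ used in the proof of Proposition \ref{prop: asymptotic expansion of freqs via eigenvals}, the nullspace condition reduces to $\gencapmat \bm y = (\omega^{2}/\delta)\bm y + O(\delta)$, which by standard perturbation theory for simple eigenvalues gives $\bm y = \bm a + O(\delta)$ after a suitable normalisation, and hence $x_j = \ell_j \bm a^{(j)} + O(\delta)$.

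The second step is the interior estimate. Proposition \ref{prop: expansion of Id functions} together with $\omega^{2} = O(\delta)$ yields $h_j^\alpha\big|_{D_i} = \delta_{ij}/\ell_j + O(\delta)$. Substituting into the representation formula gives, on each resonator,
\[
u^\alpha(x)\big|_{D_i} = \sum_{j=1}^N \bigl(\ell_j \bm a^{(j)} + O(\delta)\bigr)\bigl(\delta_{ij}/\ell_j + O(\delta)\bigr) = \bm a^{(i)} + O(\delta),
\]
which matches $\sum_j \bm a^{(j)} V_j^\alpha(x)\big|_{D_i}$ since $V_j^\alpha|_{D_i} = \delta_{ij}$ by Definition \ref{def: Capacitance matrix}.

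The third step extends the identity to $\R\setminus\crystal$. There, $u^\alpha$ solves the free Helmholtz equation with wave number $k = \omega^\alpha/v = O(\sqrt{\delta})$ and is therefore determined by its boundary values through the explicit formula \eqref{eqn:pxggr} of Lemma \ref{def:DTN}. Taylor-expanding this formula in $k$ around $k=0$, the leading contribution on each segment $(x_i^{\iR}, x_{i+1}^{\iL})$ is the linear interpolation of the boundary data, and the correction is $O(k^{2}) = O(\delta)$. Since the boundary values are $u^\alpha(x_i^{\iLR}) = \bm a^{(i)} + O(\delta)$ by the previous step, and since $\sum_j \bm a^{(j)} V_j^\alpha$ is by construction the linear interpolation of those same values outside the resonators (using the quasiperiodic extension for the segment straddling the cell boundary), the identity $u^\alpha = \sum_j \bm a^{(j)} V_j^\alpha + O(\delta)$ propagates to the whole real line. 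The main obstacle is the clean identification $x_j = \ell_j \bm a^{(j)} + O(\delta)$ via perturbation theory for the generalised capacitance eigenvalue problem, which requires the simplicity assumption on the eigenvalues of $\gencapmat$; the rest follows directly from the leading-order expansions already established in Proposition \ref{prop: expansion of Id functions} and Lemma \ref{def:DTN}.
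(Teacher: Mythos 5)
Your proposal is correct and follows essentially the same route as the paper's own (sketched) proof: representing $u^\alpha$ via Lemma \ref{lemma: admits sol if I-Cx=0} with coefficients $x_j=\ell_j\bm a^{(j)}+\BO(\delta)$, using $h_j^\alpha=\ell_j^{-1}\mathds{1}_{D_j}+\BO(\delta)$ from Proposition \ref{prop: expansion of Id functions} inside the resonators, and recovering the linear interpolation $V_j^\alpha$ outside by expanding the exterior solution of Lemma \ref{def:DTN} for small $k$. Your treatment is in fact slightly more explicit than the paper's, in particular in justifying the identification $\bm y=\bm a+\BO(\delta)$ via perturbation theory for simple eigenvalues and in quantifying the exterior correction as $\BO(k^2)=\BO(\delta)$.
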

 \begin{proof}
    We sketch the proof, referring to \cite{feppon.ammari2022} for more details. We consider the case $N=2$ as $N>2$ is only notationally more difficult. Let $u^\alpha(x)$ be a resonant eigenmode. According to \Cref{lemma: admits sol if I-Cx=0}, we may represent the resonant mode (inside the resonators) as
    \begin{align*}
        u^\alpha(x) = \ell_1\bm a^{(1)} h_1^\alpha + \ell_2\bm a^{(2)} h_2^\alpha + \BO(\delta).
    \end{align*}
    Remark that we used the change of basis $L\inv$ in \Cref{prop: asymptotic expansion of freqs via eigenvals}, so the approximation of the $\bm x$ of \Cref{lemma: admits sol if I-Cx=0} is $L\inv \bm a$. The asymptotic expansion of \Cref{prop: asymptotic of uj} shows that $h_i^\alpha=\frac{1}{\ell_i}\mathds{1}_{D_i}+ \BO(\delta)$, so that we get the result inside the resonators.

    In order to obtain a solution outside, we may apply \Cref{def:DTN}. Expanding the result of \Cref{def:DTN} for small $\delta$, we obtain a linear interpolation between the boundary points, that is $V_j^\alpha$ outside of the resonators.
 \end{proof}
\section{Hermitian case}\label{sec: Hermitian}
In this section, we analyse in closer detail the Hermitian case. For simplicity, we consider the case when $v_i=v_b$ for all $i$ for some $v_b\in\R_{>0}$. One remarks that in this case the eigenvalue problem \eqref{eq:eigevalue problem capacitance matrix} may be simplified by finding eigenvalues of $L\inv \capmat$ and multiplying the eigenvalues by $v_b^2$.

In the general case, we have to solve the generalised eigenvalue problem 
\begin{align}\label{eq: generalised eigenvalue problem}
    \capmat \bm a_i = v_b^{-2}\lambda_i L \bm a_i.
\end{align}
After a change of basis, we recover a symmetric eigenvalue problem having the same eigenvalues as \eqref{eq: generalised eigenvalue problem}
\begin{align}\label{eq: generalised eigenvalue problem symmetric}
    L^{-\frac{1}{2}} \capmat L^{-\frac{1}{2}} \bm b_i = v_b^{-2}\lambda_i \bm b_i.
\end{align}
From \eqref{eq: generalised eigenvalue problem symmetric}, we see that in the Hermitian case the subwavelength resonances are real.

\subsection{Dirac degeneracy and Zak phase}
We first prove the following result. 
 \begin{lemma}
     The eigenspace associated to $C^{\alpha}$ has dimension at most two.
 \end{lemma}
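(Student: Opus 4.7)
The plan is to exploit the (almost) tridiagonal structure of $C^{\alpha}$ together with the fact that all the immediate off-diagonal entries $-1/s_i$ are nonzero. This is the standard reason why genuine (non-cyclic) Jacobi matrices have only simple eigenvalues; here the cyclic corners allow the multiplicity to rise from one to two.

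First I would write out the eigenvalue equation $C^{\alpha}\bm v=\lambda \bm v$ componentwise. For any row with index $2\<i\<N-1$, the equation takes the three-term form
\begin{equation*}
-\tfrac{1}{s_{i-1}}v_{i-1}+\Bigl(\tfrac{1}{s_{i-1}}+\tfrac{1}{s_i}\Bigr)v_i-\tfrac{1}{s_i}v_{i+1}=\lambda v_i,
\end{equation*}
which, since $s_i>0$, can be solved explicitly for $v_{i+1}$ in terms of $v_{i-1}$ and $v_i$:
\begin{equation*}
v_{i+1}=s_i\Bigl[\Bigl(\tfrac{1}{s_{i-1}}+\tfrac{1}{s_i}-\lambda\Bigr)v_i-\tfrac{1}{s_{i-1}}v_{i-1}\Bigr].
\end{equation*}
This is a linear two-step recursion for the sequence $(v_i)_{1\<i\<N}$.

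Next, I would iterate the recursion to conclude that the pair $(v_1,v_2)$ uniquely determines $v_3,v_4,\dots,v_N$. In particular, the evaluation map
\begin{equation*}
\ker(C^{\alpha}-\lambda I)\longrightarrow \C^{2},\qquad \bm v\longmapsto (v_1,v_2),
\end{equation*}
is a well-defined linear map, and by the recursion it is injective: if $v_1=v_2=0$ then the entire eigenvector $\bm v$ vanishes. Hence $\dim\ker(C^{\alpha}-\lambda I)\<\dim \C^{2}=2$ for every $\lambda\in\C$, which is the claim.

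There is no real obstacle in this argument; the only subtle point is that the two remaining equations coming from rows $1$ and $N$ (the ones containing the cyclic entries $\pm e^{\pm\i\alpha L}/s_N$) are not used to bound the dimension from above, they only further constrain $(v_1,v_2)$ and thus determine which $\lambda$ is actually an eigenvalue. The bound $\dim\<2$ follows solely from the interior recursion, independently of $\alpha$.
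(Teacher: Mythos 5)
Your proof is correct and is essentially the paper's argument in recursive form: the injectivity of the evaluation map $\bm v\mapsto(v_1,v_2)$ is precisely the statement that the $(N-2)\times(N-2)$ minor of $C^\alpha-\lambda I$ formed by the interior rows $2,\dots,N-1$ and the columns $1,\dots,N-2$ is triangular with nonzero diagonal $-1/s_1,\dots,-1/s_{N-2}$, which is exactly how the paper phrases it. The only cosmetic difference is that the paper bounds the kernel of $C^\alpha-\lambda L$ (the generalized eigenvalue problem \eqref{eq: generalised eigenvalue problem}); since $L$ is diagonal this only alters the diagonal entries, so your recursion applies verbatim there too.
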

 \begin{proof}
     This is a consequence of the tridiagonal structure of $\capmat$: one can
     extract from $\capmat-\lambda_p^{\alpha} L$ a full rank minor of dimension $(N-2)\times
     (N-2)$ which is an upper triangular matrix with 
     diagonal $-\frac{1}{s_{1}}\dots -\frac{1}{s_{ N-2 }}$.
 \end{proof}
 The following lemma concerns degeneracies of the capacitance matrix.
 \begin{lemma}\label{lemma: double eigenvalue at pi/L}
     Assume that $N=2$. The only configuration such that \eqref{eq: generalised eigenvalue problem symmetric} admits a
     double eigenvalue is the one with $\ell_1=\ell_2$ and  $s_{1}=s_{2}$. Moreover, this double eigenvalue occurs at $\alpha=\pm
     \frac{\pi}{L}$, and $\mathcal{C}^{\pm \frac{\pi}{L}}=2s_{1}I$, where $I$ is the identity matrix.
 \end{lemma}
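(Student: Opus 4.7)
For $N=2$, I would first write down the symmetric matrix appearing in the reformulated eigenvalue problem \eqref{eq: generalised eigenvalue problem symmetric}. Using the explicit expression for $\mathcal{C}^\alpha$, one has
\[
    L^{-1/2}\mathcal{C}^\alpha L^{-1/2}=\begin{pmatrix}
        \dfrac{1}{\ell_1}\left(\dfrac{1}{s_1}+\dfrac{1}{s_2}\right) & -\dfrac{1}{\sqrt{\ell_1\ell_2}}\left(\dfrac{1}{s_1}+\dfrac{e^{-\i\alpha L}}{s_2}\right)\\[6pt]
        -\dfrac{1}{\sqrt{\ell_1\ell_2}}\left(\dfrac{1}{s_1}+\dfrac{e^{\i\alpha L}}{s_2}\right) & \dfrac{1}{\ell_2}\left(\dfrac{1}{s_1}+\dfrac{1}{s_2}\right)
    \end{pmatrix},
\]
which is Hermitian. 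The key observation is that a $2\times 2$ Hermitian matrix possesses a repeated eigenvalue if and only if it is a scalar multiple of the identity, i.e., its diagonal entries coincide and its off-diagonal entry vanishes. So the plan is to turn the lemma into two scalar equations on the geometric parameters.

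Next, I would equate the diagonal entries: since $1/s_1+1/s_2>0$, this immediately forces $\ell_1=\ell_2$. I would then impose the vanishing of the off-diagonal entry, which reads $1/s_1+e^{-\i\alpha L}/s_2=0$, or equivalently $e^{-\i\alpha L}=-s_2/s_1$. The step that matters most is taking moduli on both sides: since $|e^{-\i\alpha L}|=1$, this forces $s_1=s_2$, and then the equation reduces to $e^{-\i\alpha L}=-1$, which in the first Brillouin zone $Y^*=(-\pi/L,\pi/L]$ is equivalent to $\alpha=\pm\pi/L$. This establishes that the only possible configuration supporting a double eigenvalue is $\ell_1=\ell_2$, $s_1=s_2$, realised at $\alpha=\pm\pi/L$.

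Finally, I would verify the converse by substituting $\ell_1=\ell_2$, $s_1=s_2$, and $\alpha=\pm\pi/L$ back into $\mathcal{C}^\alpha$. The diagonal entries become $\tfrac{1}{s_1}+\tfrac{1}{s_2}=\tfrac{2}{s_1}$, while the off-diagonal entry $-\tfrac{1}{s_1}-\tfrac{e^{-\i\alpha L}}{s_2}=-\tfrac{1}{s_1}+\tfrac{1}{s_1}=0$. Thus $\mathcal{C}^{\pm\pi/L}$ is indeed a scalar multiple of the identity, and (up to the proportionality constant appearing in the statement) one recovers the announced identity. I do not anticipate any real obstacle: the only subtle point is the unit-modulus argument forcing $s_1=s_2$, and everything else reduces to direct matrix arithmetic.
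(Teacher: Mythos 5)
Your proof is correct and follows essentially the same route as the paper: both arguments reduce the existence of a double eigenvalue to the vanishing of a sum of non-negative quantities, forcing $\ell_1=\ell_2$ and $\tfrac{1}{s_1}+\tfrac{e^{\i\alpha L}}{s_2}=0$ separately; the paper does this via the discriminant of the characteristic polynomial, while you invoke the equivalent fact that a Hermitian $2\times2$ matrix with a repeated eigenvalue must be a scalar multiple of the identity. Your final computation correctly gives $\tfrac{2}{s_1}I$ on the diagonal, and your parenthetical hedge is warranted since the constant $2s_1$ in the statement appears to be a typo for $\tfrac{2}{s_1}$.
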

 \begin{proof}
     Problem \eqref{eq: generalised eigenvalue problem symmetric} reduces to find the eigenvalue of 
     \begin{equation}
     \label{eqn:7po4w}
     L^{-\frac{1}{2}}\capmat L^{-\frac{1}{2}}
     =\begin{pmatrix}
         \left(\frac{1}{s_{1}}+\frac{1}{s_{2}}\right)\ell_1^{-1}
         & 
         \left( -\frac{1}{s_{1}}-\frac{1}{s_{2}}e^{\i\alpha L}
         \right)\ell_1^{-\frac{1}{2}}\ell_2^{-\frac{1}{2}} \\
         \left( -\frac{1}{s_{1}}-\frac{1}{s_{2}}e^{-\i\alpha L}
         \right)\ell_1^{-\frac{1}{2}}\ell_2^{-\frac{1}{2}} & 
         \left(\frac{1}{s_{1}}+\frac{1}{s_{2}}\right)\ell_2^{-1}
     \end{pmatrix}.
     \end{equation}
     The characteristic polynomial of this matrix is
     \begin{align*}
         P(\lambda)&=\det\left( L^{-\frac{1}{2}}\capmat L^{-\frac{1}{2}}-\lambda I \right)\\
         &=\left( 
         \left(\frac{1}{s_{1}}+\frac{1}{s_{2}}\right)\ell_1^{-1}-\lambda \right)
         \left(
         \left(\frac{1}{s_{1}}+\frac{1}{s_{2}}\right)\ell_2^{-1}-\lambda
         \right)-
         \ell_1^{-1}\ell_2^{-1}\left| \frac{1}{s_{1}}+\frac{1}{s_{2}}e^{\i
         \alpha L} \right|^{2}\\
         &=\lambda^{2}-\left( \frac{1}{s_{1}}+\frac{1}{s_{2}} \right)\left(
         \ell_1^{-1}+\ell_2^{-1} \right)\lambda 
         +\ell_1^{-1}\ell_2^{-1}\left[ \left( \frac{1}{s_{1}}+\frac{1}{s_{2}}
         \right)^{2}-\left|  \frac{1}{s_{1}}+\frac{1}{s_{2}}e^{\i
         \alpha L} \right| \right].
     \end{align*} 
     Therefore, a multiple eigenvalue occurs when the discriminant of this second
     order polynomial vanishes, which is the case when
     \begin{align*}
         0 & =\left( \frac{1}{s_{1}}+\frac{1}{s_{2}} \right)^{2}\left(
         \ell_1^{-1}+\ell_2^{-1} \right)^{2}-4\ell_1^{-1}\ell_2^{-1}\left[ \left( \frac{1}{s_{1}}+\frac{1}{s_{2}}
         \right)^{2}-\left|  \frac{1}{s_{1}}+\frac{1}{s_{2}}e^{\i
         \alpha L} \right| \right] \\
             & = \left( \frac{1}{s_{1}}+\frac{1}{s_{2}} \right)^{2}\left(
         \ell_1^{-1}-\ell_2^{-1} \right)^{2}
             +4\ell_1^{-1}\ell_2^{-1}\left|  \frac{1}{s_{1}}+\frac{1}{s_{2}}e^{\i
         \alpha L} \right|^{2}.
    \end{align*}
     This readily implies $\ell_1=\ell_2$, and then
     \[
         \left\{\begin{aligned}
             \frac{1}{s_{1}}+\frac{1}{s_{2}}\cos(\alpha L) &= 0,\\
             \frac{1}{s_{2}}\sin(\alpha L) &= 0.
         \end{aligned}
         \right.
     \] 
     For this system to admit a solution with $0<s_{1},s_{2}$, it is
     necessary that $\alpha=\pm \frac{\pi}{L}$, and then we must have
     $s_{1}=s_{2}$.
 \end{proof}

Therefore, we study  the eigenvalues of $\capmat$ for regularly spaced dimers of resonators (i.e., $N=2$, $\ell_1=\ell_2$ and $s_1=s_2$). Let us rewrite \eqref{eqn:7po4w} only in terms of $s_{1}$ and $\ell_1$:
 \[
 L^{-\frac{1}{2}}\capmat L^{-\frac{1}{2}}=
\frac{2}{\ell_{1}s_{1}} \begin{pmatrix}
    1 & -e^{\i \alpha L/2}\cos(\alpha L /2) \\
     -e^{-\i \alpha L/ 2}\cos(\alpha L/ 2) & 
     1
 \end{pmatrix}.
 \] 
 The eigenvalues of this matrix are 
 \begin{align*}
     \lambda_0(\alpha)&=\frac{2}{\ell_1 s_{1}}\left(1-\cos\left( \frac{\alpha
     L}{2} \right)\right)=\frac{4}{\ell_{1}s_{1}}\sin^{2}\left(\frac{\alpha
     L}{4}\right),\\ 
     \lambda_1(\alpha)&=\frac{2}{\ell_{1}s_{1}}\left(1+\cos\left( \frac{\alpha
     L}{2} \right)\right)=\frac{4}{\ell_1s_{1}}\cos^{2}\left(\frac{\alpha
     L}{4}\right).
 \end{align*}
     
An associated family of eigenvectors read
 \[
 \begin{pmatrix}
     1 \\
     e^{-\i \frac{\alpha L}{2}}
 \end{pmatrix},
\\
 \begin{pmatrix}
     1 \\
    - e^{-\i \frac{\alpha L}{2}}
 \end{pmatrix}.
 \] 
Subwavelength resonances then read
\begin{align}
    \label{eq: asymptotic bands regularly spaced dimers}
    \omega_0^{\alpha}= \frac{2}{\sqrt{\ell_1s_{1}} } v_b \delta^{\frac{1}{2}} \left| \sin\left( \frac{\alpha
    L}{4} \right) \right| + \BO(\delta), \qquad
    \omega_1^{\alpha}= \frac{2}{\sqrt{\ell_1s_{1}} }v_b \delta^{\frac{1}{2}}\left| \cos\left( \frac{\alpha
    L}{4} \right) \right|+ \BO(\delta).
\end{align}
Hence, at leading order in $\delta$, a  band inversion occurs at $\alpha=\pm\frac{\pi}{L}$. Furthermore, \eqref{eq: asymptotic bands regularly spaced dimers} shows that at $\alpha=\pm\frac{\pi}{L}$ the bands form a \emph{Dirac degeneracy}. {The slopes of the two bands do not vanish at the Dirac degeneracy and satisfy $\frac{\dd}{\dd \alpha}\vert_{\pm\frac{\pi}{L}}\omega_0^\alpha=-\frac{\dd}{\dd \alpha}\vert_{\pm\frac{\pi}{L}}\omega_1^\alpha$}. Typically, breaking the symmetry of the structure results in the Dirac cone to open into a band gap \cite[Section 4]{ammari.fitzpatrick.ea2020}. We show this in \Cref{sec: Hermitian edge modes}.

The following lemma gives explicit formulas for the eigenvectors of the capacitance matrix.
\begin{lemma}\label{lemma: eigenpairs cap mat Hermitian}
    Assume that $\ell_1=\ell_2$. Then, the eigenvalues of the capacitance matrix are given by 
    \[
        \lambda_1^{\alpha}=\ell_{1}^{-1}\left[\left( \frac{1}{s_{1}}+\frac{1}{s_{2}}
        \right)-\left| \frac{1}{s_{1}}+\frac{1}{s_{2}}e^{\i
        \alpha L} \right|\right], \qquad
        \lambda_2^{\alpha}=\ell_{1}^{-1}\left[\left( \frac{1}{s_{1}}+\frac{1}{s_{2}}
        \right)+\left| \frac{1}{s_{1}}+\frac{1}{s_{2}}e^{\i
        \alpha L} \right|\right].
    \] 
    An the associated pair of eigenvectors is given by 
    \[
        a_1^{\alpha}= \frac{1}{\sqrt{2} }\begin{pmatrix}
            1 \\ e^{-\i \theta_{\alpha}} 
        \end{pmatrix},\qquad
        a_2^{\alpha}= \frac{1}{\sqrt{2} }\begin{pmatrix}
            1\\ -e^{-\i \theta_{\alpha}}
        \end{pmatrix},
    \] 
    where $\theta_{\alpha}$ is the argument such that
\begin{equation}
\label{eqn:ftrb8}
        -\left(\frac{1}{s_{1}}+\frac{1}{s_{2}}e^{\i \alpha L}\right)=\rho
        e^{\i \theta_{\alpha}}.
\end{equation}
\end{lemma}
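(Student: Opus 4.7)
The plan is to specialise the general formula for $\capmat$ given earlier to the case $N=2$ and exploit the assumption $\ell_1=\ell_2$ to reduce the generalised eigenvalue problem \eqref{eq: generalised eigenvalue problem} to a plain $2\times 2$ Hermitian eigenvalue problem. Indeed, when $\ell_1=\ell_2$ the length matrix satisfies $L=\ell_1 I_2$, so $L^{-1}\capmat=\ell_1^{-1}\capmat$ and the generalised eigenvalues are just $\ell_1^{-1}$ times the eigenvalues of $\capmat$. Combining the nearest-neighbour off-diagonal entries with the wrap-around terms in the formula displayed for $\capmat$ yields
\[
    \capmat = \begin{pmatrix} d & \bar z \\ z & d \end{pmatrix},\qquad
    d:=\frac{1}{s_{1}}+\frac{1}{s_{2}},\qquad
    z:=-\left(\frac{1}{s_{1}}+\frac{1}{s_{2}}e^{\i\alpha L}\right),
\]
which is manifestly Hermitian since the $(1,2)$-entry $-\frac{1}{s_1}-\frac{e^{-\i\alpha L}}{s_2}$ is the complex conjugate of $z$.

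Next I would read off the eigenvalues from the characteristic polynomial: since the matrix has equal diagonal entries $d$ and off-diagonal pair $(z,\bar z)$, the characteristic equation reduces to $(d-\lambda)^{2}=|z|^{2}$, i.e.\ $\lambda=d\pm|z|$. Writing $z=\rho e^{\i\theta_\alpha}$ with $\rho=|z|$ then gives the claimed formulas
\[
    \lambda_1^{\alpha}=\ell_{1}^{-1}(d-\rho),\qquad
    \lambda_2^{\alpha}=\ell_{1}^{-1}(d+\rho),
\]
upon dividing by $\ell_1$.

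For the eigenvectors I would solve $(\capmat-\lambda_k^\alpha\ell_1 I)a=0$. The top row gives $(d-\lambda_k^{\alpha}\ell_1)a^{(1)}+\rho e^{-\i\theta_\alpha}a^{(2)}=0$, so $a^{(2)}=\pm e^{\i\theta_\alpha}a^{(1)}$ with the sign matching the $\pm\rho$ in the eigenvalue. Normalising to unit length produces the stated pair $a_1^\alpha,a_2^\alpha$ (up to the choice of a global phase and the sign convention adopted for $\theta_\alpha$ in \eqref{eqn:ftrb8}).

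The computation is essentially a textbook diagonalisation of a $2\times 2$ Hermitian matrix with equal diagonal entries, so no real obstacle is expected. The only bookkeeping issue is keeping track of which off-diagonal entry of $\capmat$ is identified with $\rho e^{\i\theta_\alpha}$, which affects whether the eigenvector phases appear as $e^{\i\theta_\alpha}$ or $e^{-\i\theta_\alpha}$; this is then just a matter of aligning the proof with the convention \eqref{eqn:ftrb8} fixed in the statement.
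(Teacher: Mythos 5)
Your strategy --- specialising the displayed formula for $\capmat$ to $N=2$, using $L=\ell_1 I$ to turn the generalised problem into an ordinary one, and diagonalising a $2\times 2$ Hermitian matrix with equal diagonal entries --- is exactly the computation the paper performs (it carries it out explicitly only for $s_1=s_2$ around \cref{eqn:7po4w} and states the general lemma without further proof). Your eigenvalue derivation is complete and correct: $(d-\lambda)^2=|z|^2$ gives $\lambda=d\pm|z|$, and dividing by $\ell_1$ yields the stated $\lambda_{1}^{\alpha},\lambda_{2}^{\alpha}$.

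The eigenvector step, however, is where the only substantive content beyond the trivial part lies, and you leave it unresolved by dismissing the discrepancy as ``a global phase and a sign convention''. It is not a global phase. With your convention (taken from the general display of $\capmat$, whose top-right corner is $-e^{-\i\alpha L}/s_N$), the $(1,2)$ entry equals $\bar z=\rho e^{-\i\theta_\alpha}$, and the first row of $(\capmat-(d\pm\rho)I)a=0$ gives $a^{(2)}=\pm e^{+\i\theta_\alpha}a^{(1)}$, i.e.\ eigenvectors proportional to $(1,\pm e^{\i\theta_\alpha})^{T}$; and $(1,e^{\i\theta_\alpha})^{T}$ is not a scalar multiple of the stated $(1,e^{-\i\theta_\alpha})^{T}$ unless $e^{2\i\theta_\alpha}=\pm 1$. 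The stated form arises only under the transposed convention of \cref{eqn:7po4w}, where the $(1,2)$ entry is $-\tfrac{1}{s_1}-\tfrac{1}{s_2}e^{\i\alpha L}=\rho e^{\i\theta_\alpha}$ --- note that the paper's two $N=2$ displays of $\capmat$ are conjugate transposes of each other, so you must commit to one. Moreover, under that second convention the row equation pairs $(1,e^{-\i\theta_\alpha})^{T}$ with $d+\rho$ and $(1,-e^{-\i\theta_\alpha})^{T}$ with $d-\rho$, i.e.\ the opposite of the pairing asserted in the lemma; specialising to $s_1=s_2$ (where $\theta_\alpha=\alpha L/2+\pi$) and comparing with the paper's own earlier eigenvector list confirms this reversal. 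Since these phases are exactly what feeds into the Zak-phase computations later, the ``bookkeeping'' you defer is the nontrivial part of the proof and needs to be carried out explicitly, including a check of which sign goes with which eigenvalue.
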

\begin{definition}[Zak phase]\label{def: Zak Hermitian}
    For a non-degenerate band $\omega_j^\alpha$, we let $u_j^\alpha$ be a family of normalised eigenmodes which depend continuously on $\alpha$. Then we define the (Hermitian) Zak phase as
    \begin{align}
        \zak{j}\coloneqq \i\int_{Y^*}\left\langle u_j^\alpha,\frac{\partial}{\partial \alpha} u_j^\alpha \right\rangle \dd \alpha,
    \end{align}
    where $\langle\cdot,\cdot\rangle$ denotes the usual $L^2$ inner product.
\end{definition}
Using \Cref{lemma: eigenpairs cap mat Hermitian}, we obtain the Zak phase of the structure.
\begin{proposition}\label{prop: Zak Hermitian}
   Let $N=2$ and $\ell_1=\ell_2$. Then, we have 
    \[
        \zak{j}=\left\{\begin{aligned}
  \pi & \text{ if }s_{1}{\geq}s_{2}, \\
            0 & \text{ if }s_{1}< s_{2}.            
        \end{aligned}\right.
    \] 
\end{proposition}
One can prove \Cref{prop: Zak Hermitian} using a similar approach to the one in \cite{ammari.davies.ea2020}. We suggest a different proof whose presentation is postponed to \Cref{sec: non-Hermitian}, where it will result as a special case of the more general \Cref{thm: Zak non Hermitian}.
% \begin{proof}
%     By using the same proof as in \cite[Theorem 3.14]{ammari2020topologically} and
%     using the space $\R^{2}$ for the inner product, 
%     we find that
%     \[
%     \phi_j^{z}:=\i \int_{Y^{*}}\langle \a_j^{\alpha},\frac{\dd}{\dd
%     \alpha}\a_j^{\alpha}  \rangle\dd \alpha =\frac{1}{2}\left(
%     \theta_{\frac{\pi}{L}}-\theta_{\frac{-\pi}{L}} \right)
%     \] 
%     where the argument $\theta_{\alpha}$ of \eqref{eqn:ftrb8} is selected differentiable. One sees that
%     the complex number 
%     $z=\frac{1}{\ell_{12}}+\frac{1}{\ell_{23}}e^{\i\alpha L}$ winds around the
%     origin if and only if
%     \[
%     \frac{1}{\ell_{23}}>\frac{1}{\ell_{12}},
%     \] 
%     in that case we find $\phi_j^{z}=\pi$. In the other case, $\phi_j^{z}=0$.
% \end{proof}
\subsection{Localised edge modes generated by geometrical defects}\label{sec: Hermitian edge modes}
In this subsection, we study an infinite structure composed by two periodic parts. We consider this structures as having a geometrical defect in the periodicity, see \Cref{fig: geometrical defect}.

Such structures have been studied in the case of tight-binding Hamiltonian systems \cite{drouot.fefferman.ea2020,fefferman.lee-thorp.ea2017,fefferman.lee-thorp.ea2014} and for an SSH chain of resonators in $\R^3$ \cite{ammari.davies.ea2020}.

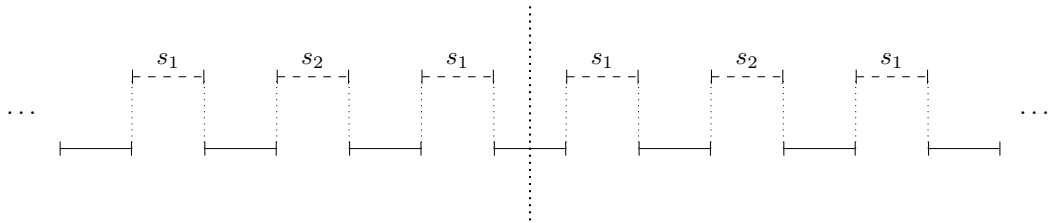
\begin{figure}[h]
    \centering
    \begin{adjustbox}{width=\textwidth}
        \begin{tikzpicture}
        \draw[-,thick,dotted] (-.5,-1) -- (-.5,2);
        \draw[|-|,dashed] (0,1) -- (1,1);
        \node[above] at (0.5,1) {$s_1$};
        \draw[|-|] (1,0) -- (2,0);
        \node[below] at (1.5,0) {};
        \draw[-,dotted] (1,0) -- (1,1);
        
        \draw[|-|,dashed] (2,1) -- (3,1);
        \node[above] at (2.5,1) {$s_2$};
        \draw[|-|] (3,0) -- (4,0);
        \node[below] at (3.5,0) {};
        \draw[-,dotted] (2,0) -- (2,1);
        \draw[-,dotted] (3,0) -- (3,1);
        \draw[-,dotted] (4,0) -- (4,1);
        
        \begin{scope}[shift={(+4,0)}]
        \draw[|-|,dashed] (0,1) -- (1,1);
        \node[above] at (0.5,1) {$s_1$};
        \draw[|-|] (1,0) -- (2,0);
        \node[below] at (1.5,0) {};
        \draw[-,dotted] (1,0) -- (1,1);
        \node at (2.5,.5) {\dots};
        \end{scope}
        
\begin{scope}[shift={(-4,0)}]

        \draw[|-|,dashed] (0,1) -- (1,1);
        \node[above] at (0.5,1) {$s_2$};
        \draw[|-|] (1,0) -- (2,0);
        \node[below] at (1.5,0) {};
        \draw[-,dotted] (1,0) -- (1,1);
        
        \draw[|-|,dashed] (2,1) -- (3,1);
        \node[above] at (2.5,1) {$s_1$};
        \draw[|-|] (3,0) -- (4,0);
        \node[below] at (3.5,0) {};
        \draw[-,dotted] (2,0) -- (2,1);
        \draw[-,dotted] (3,0) -- (3,1);
        \draw[-,dotted] (4,0) -- (4,1);
        \end{scope}
        
        \begin{scope}[shift={(-8,0)}]
        \node at (.5,.5) {\dots};
        \draw[|-|] (1,0) -- (2,0);
        \node[below] at (1.5,0) {};
        
        \draw[|-|,dashed] (2,1) -- (3,1);
        \node[above] at (2.5,1) {$s_1$};
        \draw[|-|] (3,0) -- (4,0);
        \node[below] at (3.5,0) {};
        \draw[-,dotted] (2,0) -- (2,1);
        \draw[-,dotted] (3,0) -- (3,1);
        \draw[-,dotted] (4,0) -- (4,1);
        \end{scope}
        
        \end{tikzpicture}
    \end{adjustbox}
    \caption{Infinite structure with a geometrical defect.}
    \label{fig: geometrical defect}
\end{figure}

The peculiarity of such defect structures is the support for edge modes. These modes have frequencies that lay in the band gap and thus are particularly robust with respect to perturbations. Furthermore, they are spatially localised near the defect.

To show the existence of an edge mode, we compute the subwavelength resonances of a finite but large array having the same geometrical defect. In the three-dimensional case, it has been shown that this is indeed an accurate approximation \cite{ammari.davies.ea2023Spectral}.
\Cref{fig: hermtian edgemode plot} shows the existence of edge modes. \Cref{fig: spectrum edgemode setup} illustrates that the frequencies of the edge modes are well-separated from the bulk and lay inside the band gap. This figure is of particular interest as it suggests that the spectrum of the finite approximation that does not lay in the band gap converges to the continuous spectrum of the periodic structure. In \Cref{fig: convergence relative error hermtian} we consider the frequencies in the band gap and compute a convergence scaling roughly as $\BO(n^{-10})$, where $n$ is the number of resonators in the structure. This exponential convergence is due to the fact that the dimension of the lattice is equal to that of the physical space \cite{ammari.davies.ea2023Spectral, lin.santosa2013Resonances, lin2016perturbation, lu.marzuola.ea2022Defect}. 

As mentioned before, edge frequencies laying in the band gap are typically robust to perturbations. In \Cref{fig: stability hermitian edge mode}, we show that these frequencies are only minimally influenced by slightly perturbing the distances between the resonators via
\begin{align*}
    \tilde{s_i} = s_i + \epsilon_i,\quad \epsilon_i\sim \mathcal{N}(0,\sigma^2)
\end{align*}
with $\mathcal{N}(0,\sigma^2)$ being a uniform distribution 
with standard deviation $\sigma$ and mean-value zero. 
In particular, they remain in the band gap. We thus call these edge modes \emph{topologically protected}.

\begin{figure}[h]
    \centering
    \begin{subfigure}[t]{0.48\textwidth}
    \centering
    \includegraphics[width=\textwidth]{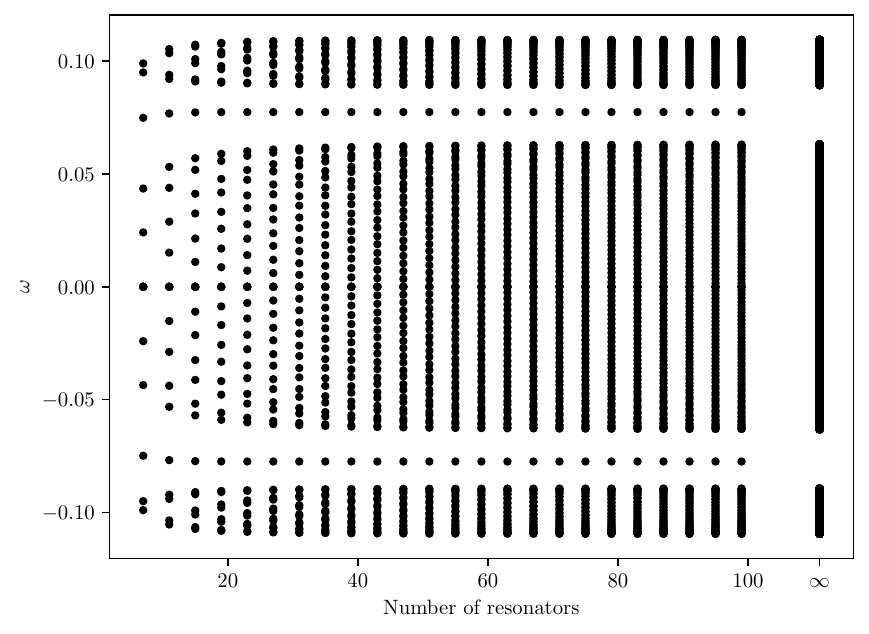}
    \caption{{Convergence of the subwavelength resonances of a finite structure with a geometrical defect for an increasing number of resonators. The solid line shows the spectrum of a periodic structure.}}
    \label{fig: spectrum edgemode setup}
    \end{subfigure}
    \hfill
    \begin{subfigure}[t]{0.48\textwidth}
        \centering
        \includegraphics[width=\textwidth]{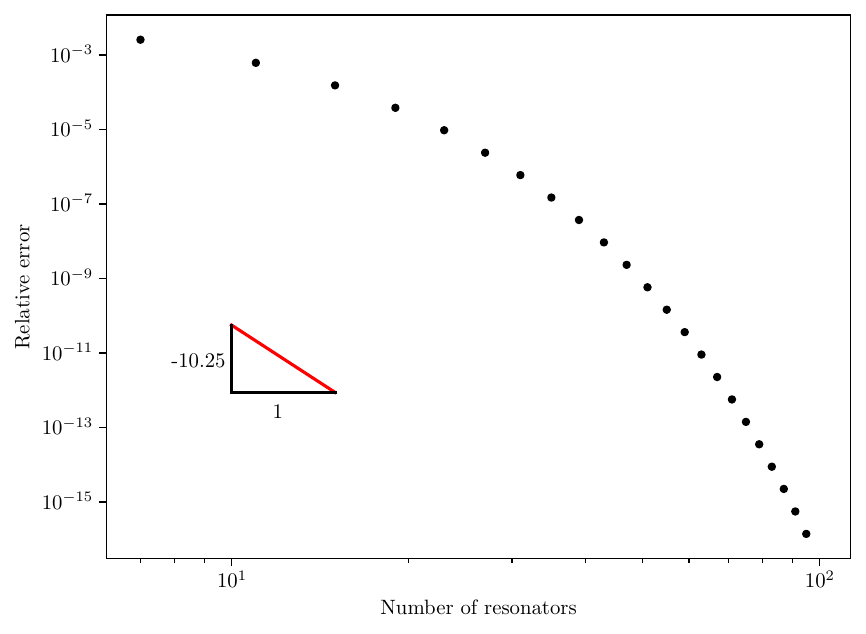}
        \caption{Convergence of the relative error $\abs{\omega_n-\omega_{100}}$ where $\omega_i$ is the edge subwavelength resonance in a finite structure with $i$ resonators. The plot is in log-log scale.}
        \label{fig: convergence relative error hermtian}
        \end{subfigure}
    \\[5mm]
% Generating code:
%plot_RF_increasing_length(Nmin=6+1,Nmax=100+1,step=4,hermitian=False,lij=[1,2],li=1,vb=2)
% \begin{figure}[h]
%     \centering
%     \includegraphics[width=0.5\textwidth]{figures/ResFreq_alphdep_None_l0=1_l01=2_vb0=2_vb1=2.pdf}
%     \caption{Positive quasifrequencies in the same setup as above (convergence).}
%     \label{fig:my_label}
% \end{figure}
% Generating code:
% pwp = PeriodicWaveProblem(N=2,li=1,lij=[2,1],v=1,vb=2,delta=0.001)
%     pwp.get_all_resonant_frequencies()
%     pwp.plot_resonant_frequencies(type='single',
%                               re=True,
%                               im=False,
%                               edgemode=False,
%                               save=True)
\begin{subfigure}[t]{0.48\textwidth}
    \centering
    \includegraphics[width=\textwidth]{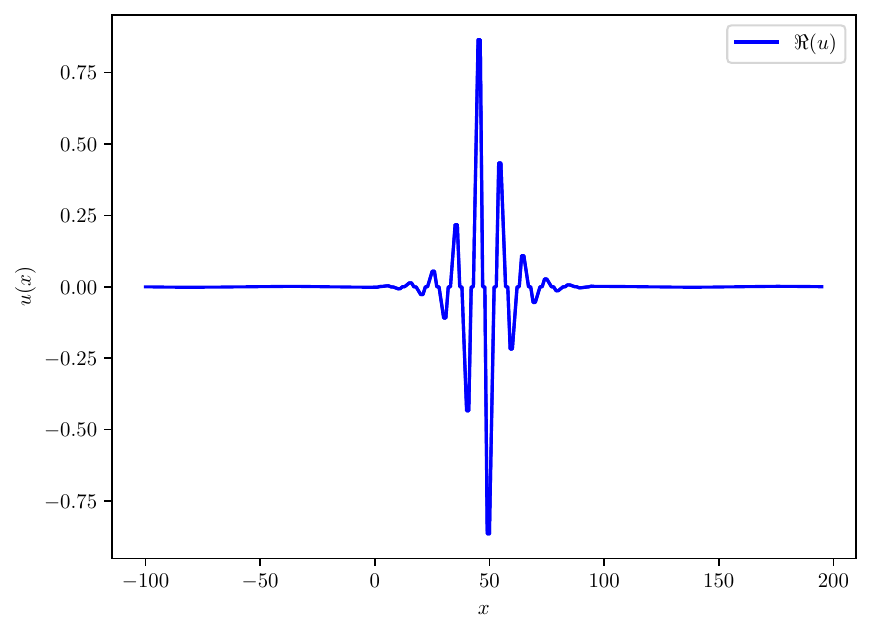}
    \caption{Localised edge mode for a finite but large array of $N=39$ resonators having a geometrical defect.}
    \label{fig: hermtian edgemode plot}
    % Generating code
% N = 39
%     lij_unit = [1,2]
%     lij = list(np.flip(lij_unit)) * int(N / 4 + 1) + list(lij_unit) * int(
%         N / 4 + 1)
%     lij = lij[1:-1]
%     fwp = FiniteWaveProblem(N=N, li=1, lij=lij, v=1, vb=2, delta=0.001,
%                             uin=lambda x:np.zeros_like(x),
%                             duin=lambda x:np.zeros_like(x))
%     omegas, idxs = fwp.find_edgemodefrequency_and_idx()
%     fwp.set_omega(omega=omegas[0])

%     #fwp.set_omega(omega=fwp.getResonatFrequencies()[1])
%     #print(fwp.omega)
%     fwp.plot_u(im=True, long_range=100, save=True, name_addition='edgemode_N=39_Hermitian')
\end{subfigure}
\hfill
\begin{subfigure}[t]{0.48\textwidth}
    \centering
    \includegraphics[width=\textwidth]{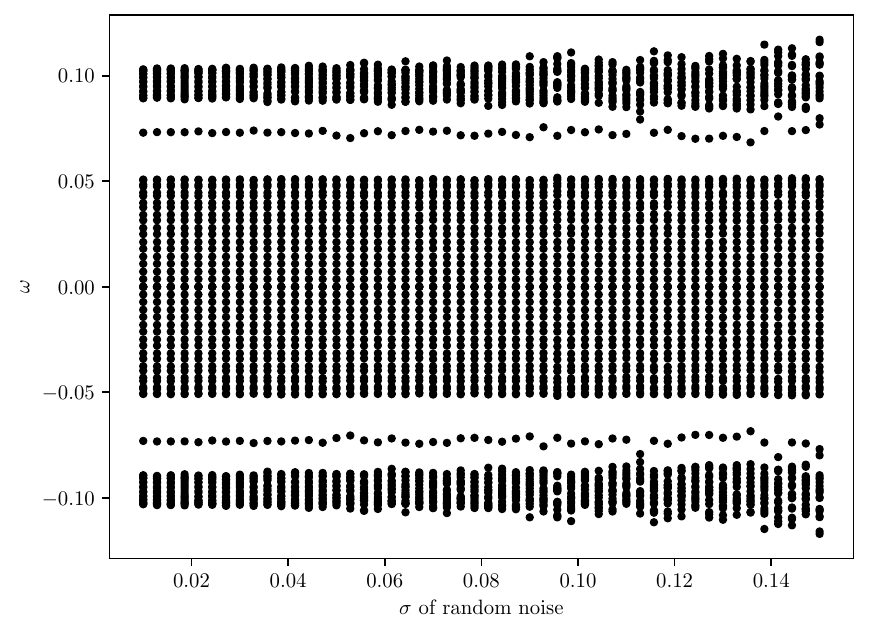}
    \caption{Stability of  subwavelength edge resonances with respect to perturbations in the geometry.}
    \label{fig: stability hermitian edge mode}% Generating code
    % plot_RF_with_geometrical_perturbation(N=10*4-1,sigma_min=0.01,sigma_max=0.15,lij=[1,3],li=1,vb=2, save=True)
\end{subfigure}
\caption{Edge modes generated by geometrical defects. For the infinite structure, we use $N=2$, $\ell_i=1$, $s_1=2$, $s_2=1$.}
\label{fig:Edgemodes generated by geometrical defects}
\end{figure}

%%%%%%%%%%%%%%%%%%%%%%%%%%%%%%%%%%%%%
% Section Zak Phase
%%%%%%%%%%%%%%%%%%%%%%%%%%%%%%%%%%%%%
\section{Non-Hermitian case}\label{sec: non-Hermitian}
In the non-Hermitian case, the material parameters $\kappa_i$ are complex with non vanishing imaginary parts. As we want to analyse the influence of the complex material parameters, we assume for the rest of this section that the size of the resonators is constant, i.e.,  $\ell_i=\ell_1$ for all $1\leq i\leq N$.

A particular case of this non-Hermitian setup are systems with PT-symmetry. Originating from quantum mechanics, this terms defines a system where gains and losses are balanced, that is, $v_1 = \bar{v_2}$ in the case of a dimer of resonators.
\subsection{Non-Hermitian Zak phase}
\begin{definition}[Non-Hermitian Zak phase]\label{def: Zak phase Non Hermitian}
    The non-Hermitian Zak phase $\zak{j}$, for $1\leq j\leq N$, is defined by
    \begin{align*}
        \zak{j} \coloneqq \frac{\i}{2}\int_{Y^*} \left(\left\langle v_j^\alpha, \frac{\partial u_j^\alpha}{\partial \alpha} \right\rangle + \left\langle u_j^\alpha, \frac{\partial v_j^\alpha}{\partial \alpha} \right\rangle \right) \dd \alpha, 
    \end{align*}
    where $u_j^\alpha$ and $v_j^\alpha$ are respectively the left and right eigenmodes.
\end{definition}
We remark immediately that \Cref{def: Zak phase Non Hermitian} is a generalisation of \Cref{def: Zak Hermitian} as left and right eigenmodes are equal in the Hermitian case.

The following lemma is \cite[Lemma 3.5]{ammari.hiltunen2020}.
\begin{lemma}\label{lemma: approx Zak formula}
    Let $\bm u_j$ and $\bm v_j$ be a bi-orthogonal system (i.e., $\langle \bm v_i,\bm u_j\rangle=\delta_{ij}$) of eigenvectors of the generalised capacitance matrix defined by \eqref{eq: def generalised capacitance matrix}, so that \Cref{lemma: approx of eigemodes via eigenvectors} holds. Then, the Zak phase can be written as
    \begin{align}
        \zak{j}=-\Im\left(\int_{Y^*}\left\langle {\bm v}_j,\frac{\partial {\bm u}_j}{\partial \alpha} \right\rangle \dd\alpha\right)+\BO(\delta).
    \end{align}
\end{lemma}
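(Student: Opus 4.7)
The plan is to substitute the approximation of the eigenmodes from \cref{lemma: approx of eigemodes via eigenvectors} into the definition \cref{def: Zak phase Non Hermitian} and to exploit the piecewise structure of the capacitance functions $V_k^\alpha$. Up to $\BO(\delta)$, one has
\[
    u_j^\alpha(x) = \sum_{k=1}^N \bm{u}_j^{(k)}(\alpha)\, V_k^\alpha(x), \qquad v_j^\alpha(x) = \sum_{k=1}^N \bm{v}_j^{(k)}(\alpha)\, V_k^\alpha(x).
\]
The key observation is that $V_k^\alpha \equiv \delta_{km}$ on each resonator $D_m \subset Y$, so that $V_k^\alpha$ is independent of $\alpha$ inside $\crystal$; all the $\alpha$-dependence of $V_k^\alpha$ is concentrated on the piecewise-linear interpolations in the gaps, via the quasiperiodicity factor $e^{\i\alpha L}$. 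After substitution and differentiation under the integral sign, the interior of the resonators contributes $\sum_k \ell_k \overline{\bm{v}_j^{(k)}}\, \partial_\alpha \bm{u}_j^{(k)}$ to $\langle v_j^\alpha, \partial u_j^\alpha/\partial\alpha\rangle$, together with supplementary terms coming from the gap-region overlaps $\langle V_k^\alpha, V_l^\alpha\rangle$ and from the $\alpha$-derivative terms $\langle V_k^\alpha, \partial_\alpha V_l^\alpha\rangle$.

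Second, the bi-orthogonality $\langle \bm{v}_j, \bm{u}_j\rangle = 1$ provides the essential algebraic identity. Differentiating in $\alpha$ yields
\[
    \left\langle \bm{v}_j, \frac{\partial \bm{u}_j}{\partial \alpha}\right\rangle + \left\langle \frac{\partial \bm{v}_j}{\partial \alpha}, \bm{u}_j\right\rangle = 0,
\]
so that $\langle \bm{v}_j, \partial_\alpha \bm{u}_j\rangle = -\overline{\langle \bm{u}_j, \partial_\alpha \bm{v}_j\rangle}$. Therefore the symmetric combination appearing in \cref{def: Zak phase Non Hermitian} collapses to a purely imaginary quantity,
\[
    \left\langle \bm{v}_j, \frac{\partial \bm{u}_j}{\partial \alpha}\right\rangle + \left\langle \bm{u}_j, \frac{\partial \bm{v}_j}{\partial \alpha}\right\rangle = 2\i\,\Im\left\langle \bm{v}_j, \frac{\partial \bm{u}_j}{\partial \alpha}\right\rangle,
\]
and multiplying by $\i/2$ then integrating over $Y^*$ produces exactly the claimed formula $-\Im\int_{Y^*}\langle \bm{v}_j, \partial_\alpha \bm{u}_j\rangle\dd\alpha$.

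The main obstacle is to reconcile the $L^2$-inner products of the functional eigenmodes with the Euclidean inner products of the finite-dimensional eigenvectors, up to $\BO(\delta)$. The Gram matrix of the family $(V_k^\alpha)_{1\<k\<N}$ equals $L$ on the interior contribution, but has $\BO(1)$ corrections from the piecewise-linear segments in the gaps; likewise, $\langle V_k^\alpha, \partial_\alpha V_l^\alpha\rangle$ is not a priori negligible. The strategy is to show that, after the symmetrization that extracts the imaginary part, the gap contributions either cancel or reduce to a total $\alpha$-derivative over $Y^*$, which integrates to a multiple of $\pi$ and is therefore absorbed by the gauge freedom of the Zak phase under smooth rescalings of the bi-orthogonal pair $(\bm u_j, \bm v_j)$. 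An explicit computation of $V_k^\alpha$ and $\partial_\alpha V_k^\alpha$ on each gap segment, together with the quasiperiodic boundary constraints, should make these cancellations manifest and complete the proof.
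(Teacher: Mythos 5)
Your central algebraic step is right and is indeed the heart of the matter: differentiating the bi-orthogonality relation $\langle \bm v_j,\bm u_j\rangle=1$ in $\alpha$ gives $\langle \bm u_j,\partial_\alpha \bm v_j\rangle=-\overline{\langle \bm v_j,\partial_\alpha \bm u_j\rangle}$, so the symmetrised integrand in \cref{def: Zak phase Non Hermitian} collapses to $2\i\,\Im\langle \bm v_j,\partial_\alpha \bm u_j\rangle$ and the prefactor $\i/2$ yields the claimed $-\Im$. (For reference, the paper does not prove this lemma at all; it imports it verbatim as Lemma 3.5 of \cite{ammari.hiltunen2020}, so there is no in-paper argument to compare against.)

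However, your proof is not complete where you yourself flag the ``main obstacle''. The lemma requires replacing the $L^2$ pairings of the functional modes $u_j^\alpha,v_j^\alpha$ by the Euclidean pairings of $\bm u_j,\bm v_j$ up to $\BO(\delta)$, and this is precisely the step you do not carry out: you observe that the Gram matrix $\langle V_k^\alpha,V_l^\alpha\rangle$ and the derivative overlaps $\langle V_k^\alpha,\partial_\alpha V_l^\alpha\rangle$ carry $\BO(1)$, $\alpha$-dependent contributions from the gap regions, and then only assert that ``an explicit computation \dots should make these cancellations manifest''. That assertion is the entire analytic content of the reduction, so as written the proof has a genuine gap. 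Worse, your fallback mechanism is not valid for the statement as posed: a residual total $\alpha$-derivative over $Y^*$ integrates to a boundary difference (zero if the integrand is $2\pi/L$-periodic), not generically to ``a multiple of $\pi$'', and the lemma asserts an equality of numbers up to $\BO(\delta)$, not up to $\pi$ --- gauge freedom of the bi-orthogonal pair only shifts the Zak phase by multiples of $2\pi$, so it cannot absorb an uncontrolled $\BO(1)$ discrepancy. Two further points would close the argument cleanly. First, in the setting where this lemma is used the inner product should be taken over the resonators $D$ (the natural space here is $H^1(D)$, cf.\ \cref{eqn:variational form}), where $V_k^\alpha\equiv\delta_{km}$ on $D_m$; then the Gram matrix is exactly $L=\ell_1 I$ (recall $\ell_i=\ell_1$ is a standing assumption in \cref{sec: non-Hermitian}), $\partial_\alpha V_k^\alpha=0$ on $D$, and after normalising $\langle v_j^\alpha,u_j^\alpha\rangle=1$ the reduction is immediate --- no gap-region cancellation is needed. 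Second, you implicitly differentiate the $\BO(\delta)$ remainder of \cref{lemma: approx of eigemodes via eigenvectors} in $\alpha$ and keep it $\BO(\delta)$; this needs a word of justification (analyticity of the modes in $\alpha$ and $\delta$, uniform on $Y^*$ away from degeneracies).
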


We will now derive an explicit formula for the non-Hermitian Zak phase. This, as the non-Hermitian version is a generalisation of the Hermitian one, will allow us to prove \Cref{prop: Zak Hermitian}.
\begin{remark} \label{rmk: LA biorthogonal}
Consider an eigendecomposition
\begin{align*}
    M = U D U\inv
\end{align*}
of a matrix $M$, where $U$ is an invertible matrix with columns given by (right) eigenvectors and $D$ a diagonal matrix. Then, a basis of left eigenvectors is given by the columns of the matrix $V\coloneqq (U\inv)^*$. Furthermore, the two matrices are bi-orthogonal, meaning that $V^* U=I$ so that the left and right eigenvectors satisfy $\left\langle v_i,u_j\right\rangle = \delta_{ij}$.
\end{remark}
Let $U(\alpha)$ be an eigenbasis of the generalised quasiperiodic capacitance matrix and $V(\alpha)=(U(\alpha))\inv$ be the corresponding bi-orthogonal basis according to \Cref{rmk: LA biorthogonal}. Then, defining
\begin{align*}
    V^*(\alpha)\frac{\partial}{\partial \alpha} U(\alpha) = U\inv(\alpha)\frac{\partial}{\partial \alpha} U(\alpha)\eqqcolon
     J(\alpha),
\end{align*}
the Zak phase take the following form according to \Cref{lemma: approx Zak formula}:
\begin{align}\label{eq: zak as integral of product of matrices}
    \zak{j} = -\Im\left(\int_{Y^*} J_{j,j}(\alpha) \dd \alpha\right)+\BO(\delta).
\end{align}
Let $a=\frac{1}{s_{1}}+\frac{1}{s_{2}}$ and $b(\alpha)=-\frac{1}{s_{1}}-\frac{e^{-\i L \alpha}}{s_{2}}$, so that the generalised capacitance matrix is given by
\begin{align*}
    \gencapmat\coloneqq \begin{pmatrix}
        v_1^2a & v_1^2b(\alpha)\\
        v_2^2\bar{b(\alpha)} & v_2^2a
    \end{pmatrix}
\end{align*}
with eigenbasis given by the columns of
\begin{align}\label{eq: U(alpha) gen cap mat}
    U(\alpha)&\coloneqq \begin{pmatrix}
    -a(v_2^2-v_1^2)-\sqrt{a^2(v_1^2-v_2^2)^2+4v_1^2v_2^2\abs{b(\alpha)}^2} & -a(v_2^2-v_1^2)+\sqrt{a^2(v_1^2-v_2^2)^2+4v_1^2v_2^2\abs{b(\alpha)}^2} \\
    2 v_2^2\bar{b(\alpha)} & 2 v_2^2\bar{b(\alpha)}
        % \i\Im(\zeta)-\sqrt{-\Im(\zeta)^2+\abs{\xi(\alpha)}^2} & \i\Im(\zeta)+\sqrt{-\Im(\zeta)^2+\abs{\xi(\alpha)}^2} \\
        % \xi(\alpha)^* & \xi(\alpha)^*
    \end{pmatrix}\\
    &\eqqcolon\begin{pmatrix}
    -a(v_2^2-v_1^2)-\sqrt{f(b(\alpha))} & -a(v_2^2-v_1^2)+\sqrt{f(b(\alpha))} \nonumber\\
    2 v_2^2\bar{b(\alpha)} & 2 v_2^2\bar{b(\alpha)}
    \end{pmatrix}.
\end{align}
Actually, if $b(\alpha)=0$, then this formula for $U(\alpha)$ does not work. However, as we will be later interested in integrating this quantity and the set $\{\alpha:b(\alpha)=0\}$ has zero measure, we can just work with the formula above.

In particular, for a non-degenerate $\gencapmat$, we have
\begin{align*}
    U(\alpha)\inv = \frac{1}{4v_2^2\bar{b(\alpha)}f(b(\alpha))} \begin{pmatrix}
    2 v_2^2\bar{b(\alpha)} & +a(v_2^2-v_1^2)-\sqrt{f(b(\alpha))} \\
    -2 v_2^2\bar{b(\alpha)} & -a(v_2^2-v_1^2)-\sqrt{f(b(\alpha))}
    \end{pmatrix},
\end{align*}
so that
\begin{align*}
    J_{1,1} = \frac{2 v_2^2\bar{b(\alpha)}}{4v_2^2\bar{b(\alpha)}\sqrt{f(b(\alpha))}}\frac{\partial}{\partial \alpha}(-\sqrt{f(b(\alpha))}) + \frac{a(v_2^2-v_1^2)-\sqrt{f(b(\alpha))}}{4v_2^2\bar{b(\alpha)}\sqrt{f(b(\alpha))}}\frac{\partial}{\partial \alpha}2 v_2^2\bar{b(\alpha)}.
    %
    %\frac{\xi(\alpha)^*}{\xi(\alpha)^*2\sqrt{-\Im(\zeta)^2+\abs{\xi(\alpha)}^2}}\frac{\partial}{\partial \alpha}\left(-\sqrt{-\Im(\zeta)^2+\abs{\xi(\alpha)}^2} \right) +\\ \frac{\left(-\i\Im(\zeta)-\sqrt{-\Im(\zeta)^2+\abs{\xi(\alpha)}^2} \right) }{\xi(\alpha)^*2\sqrt{-\Im(\zeta)^2+\abs{\xi(\alpha)}^2}}\frac{\partial}{\partial \alpha}\left(\xi(\alpha)^* \right)
\end{align*}
By periodicity, we know that $b(\alpha)$ draws a closed path in $\C$. Remark that $f(b(\alpha))$ is a closed curved tracing a line (or two segments), so that integrating over it always results in zero. Reformulating the above in terms of path integral we get
\begin{align*}
    \int_{Y^*} J_{1,1}(\alpha)\dd \alpha = &-\frac{1 }{2\sqrt{f(b(\alpha))}}\frac{\partial}{\partial \alpha}(\sqrt{f(b(\alpha))})\dd \alpha + \frac{a(v_2^2-v_1^2)}{2}\int_{Y^*}\frac{1}{\bar{b(\alpha)}\sqrt{f(b(\alpha))}}\frac{\partial}{\partial \alpha}\bar{b(\alpha)}\dd \alpha \\&- \frac{1}{2}\int_{Y^*}\frac{1}{\bar{b(\alpha)}}\frac{\partial}{\partial \alpha}\bar{b(\alpha)},
\end{align*}
so that \eqref{eq: zak as integral of product of matrices} becomes
\begin{align}
    \label{eq: final version zak, with def of P}
    \zak{j} = \frac{1}{2}\Im\left(\underbrace{-\int_{\sqrt{f}} \frac{1}{z} \dd z}_{=0}+(-1)^{j+1}\underbrace{a(v_2^2-v_1^2)\int_{\bar{b}} \frac{1}{z\sqrt{f(z)}} \dd z}_{\coloneqq P} - \int_{\bar{b}} \frac{1}{z} \dd z\right)+\BO(\delta).
\end{align}
Thus, we have shown the following theorem.

\begin{theorem}\label{thm: Zak non Hermitian}
    Consider a geometrical structure with $N=2$ and $\ell_1=\ell_2$ with a non-degenerate corresponding band structure. Then the Zak phase has the following asymptotic expansion:
    \begin{align}
    \zak{j} = (-1)^{j+1}\frac{s_1+s_2}{2s_1s_2}\Im\left((v_2^2-v_1^2)\int_{\gamma} \frac{1}{z\sqrt{f(z)}} \dd z\right)+\pi\mathds{1}_{\{x<s_1\}}(s_2)+\BO(\delta),
\end{align}
where $\gamma$ is the closed path
\begin{align*}
    \gamma(t)\coloneqq s_1\inv + s_2\inv e^{\i L t},
\end{align*}
and $f$ is defined along $\gamma$ as
\begin{align*}
    f(z)\coloneqq \left(s_1\inv+s_2\inv\right)^2(v_1^2-v_2^2)^2+4v_1^2v_2^2\abs{z}^2.
\end{align*}
\end{theorem}

One remarks already here that for the special case $v_1^2=v_2^2$ one obtains 
\begin{align}
    \zak{j} = \frac{1}{2}\Im\left(\int_{\gamma} \frac{1}{z} \dd z\right) = \begin{dcases}
        \pi & \mbox{if } \frac{1}{s_{2}}>\frac{1}{s_{1}}, \\
        0 & \mbox{if } \frac{1}{s_{2}}\leq\frac{1}{s_{1}},
        \end{dcases}
\end{align}
as in this case the Zak phase is known to be quantised \cite{ammari.davies.ea2020} so that we can drop the asymptotic factor. This proves \Cref{prop: Zak Hermitian}.

In general the integrals above are tedious to evaluate because of the non-holomorphicity of the integrand but we can check numerically that the integral is not zero and not constant, showing the non-quantisation of the Zak phase in the non-Hermitian case. Some values of this integral are shown in \Cref{tab:values of integral}.
\begin{table}[h]
    \centering
    \begin{tabular}{c|c|c|c|r}
        $s_{1}$ & $s_{2}$ & $v_1$ & $v_2$ & $\frac{1}{2}\Im(P)$ \\\hline\hline
        $1$ & $2$ & $1+1.38\i$ & $1-1.42\i$ & $0.408$\\\hline
        $1$ & $1$ & $1+1.38\i$ & $1-1.42\i$ & $2.420$\\\hline
        $1$ & $1$ & $1-1.42\i$ & $1+1.38\i$ & $-2.420$
    \end{tabular}
    \caption{Values of the perturbation factor $P$ {defined in \eqref{eq: final version zak, with def of P}} for various geometrical and material configurations.}
    \label{tab:values of integral}
\end{table}

In the PT-symmetric case, the system is degenerate. It has twice a double eigenvalue. The following result in that case can be shown explicitly. 
\begin{lemma}[PT-symmetric Zak phase]
    Assume that $N=2$ and $v_2=\bar{v_1}$. Then, 
    \begin{align*}
        \zak{j} = \BO(\delta).
    \end{align*}
\end{lemma}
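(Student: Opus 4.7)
The plan is to exploit the PT invariance of $\gencapmat(\alpha)$ to construct a gauge in which the Berry connection is manifestly real, so that its imaginary part integrates to zero.

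First I would verify that $\gencapmat(\alpha)$ commutes with the antilinear operator $PT \coloneqq \sigma_1 K$, where $\sigma_1 = \bigl(\begin{smallmatrix}0 & 1\\ 1 & 0\end{smallmatrix}\bigr)$ and $K$ denotes complex conjugation. Setting $\mu \coloneqq v_1^2$ so that $v_2^2 = \bar\mu$, a direct computation gives $\sigma_1\overline{\gencapmat(\alpha)}\sigma_1 = \gencapmat(\alpha)$, and the same identity holds for $\gencapmat(\alpha)^*$. Since $(PT)^2 = I$, standard PT-symmetry theory ensures that in the unbroken PT regime, where the eigenvalues $\lambda_\pm(\alpha)$ are real, the right and left eigenvectors can be chosen PT-invariant, that is $u_{j,2} = \overline{u_{j,1}}$ and $v_{j,2} = \overline{v_{j,1}}$.

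In this PT-invariant gauge, the Berry connection simplifies to
\[
\langle \bm v_j, \partial_\alpha \bm u_j\rangle = \overline{v_{j,1}}\,\partial_\alpha u_{j,1} + v_{j,1}\,\overline{\partial_\alpha u_{j,1}} = 2\,\Re\bigl(\overline{v_{j,1}}\,\partial_\alpha u_{j,1}\bigr),
\]
which is manifestly real. Invoking \cref{lemma: approx Zak formula}, one then obtains $\zak{j} = -\Im\int_{Y^*}\langle \bm v_j, \partial_\alpha \bm u_j\rangle\,\dd\alpha + \BO(\delta) = \BO(\delta)$.

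The main obstacle I anticipate is the presence of two exceptional points in the Brillouin zone, which is the ``twice a double eigenvalue'' structure alluded to in the text. In the PT-broken region between them, the eigenvalues form complex-conjugate pairs and the PT-invariant gauge cannot be globally extended. I would address this by analytically continuing the eigenvectors through the exceptional points and using the $\alpha \mapsto -\alpha$ symmetry of $f(z(\alpha))$ --- which maps broken and unbroken regions onto themselves --- to show that any additional contribution coming from the branch cuts of $\sqrt{f}$ is itself of order $\delta$, preserving the claim $\zak{j} = \BO(\delta)$.
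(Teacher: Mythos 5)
Your PT-symmetry identity $\sigma_1\overline{\gencapmat(\alpha)}\sigma_1=\gencapmat(\alpha)$ is correct, and so is the conclusion you draw from it on the \emph{unbroken} part of the Brillouin zone: where the two eigenvalues are real and simple, the eigenvectors can be normalised so that $u_{j,2}=\overline{u_{j,1}}$ and $v_{j,2}=\overline{v_{j,1}}$, the biorthogonal normalisation $\langle \bm v_j,\bm u_j\rangle=2\Re(\overline{v_{j,1}}u_{j,1})=1$ is consistent with this choice, and the Berry connection is manifestly real. The gap is that this covers only part of $Y^*$, and the part it misses is the heart of the matter, not a corner case. The discriminant of $\gencapmat$ equals $4\bigl(\abs{v_1}^4\abs{b(\alpha)}^2-a^2\Im(v_1^2)^2\bigr)$, so whenever $\Im(v_1^2)\neq 0$ and $\abs{b(\alpha)}$ dips low enough (always, for instance, when $s_1=s_2$, since then $b(\pm\pi/L)=0$) there is a PT-broken interval bounded by the two exceptional points that the paper alludes to. On that interval the eigenvalues form a complex-conjugate pair and the antilinear symmetry \emph{swaps} the two bands rather than fixing each one; the most it yields is $A_2(\alpha)=\overline{A_1(\alpha)}$, i.e.\ cancellation of the imaginary parts in the \emph{sum} of the two Zak phases, not the vanishing of each one separately. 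Your proposed repair --- analytically continuing through the exceptional points and asserting that the branch-cut contribution is $\BO(\delta)$ --- is precisely the step that would need to be proved and is left unsubstantiated; note moreover that $\langle\bm v_j,\partial_\alpha\bm u_j\rangle$ diverges at the exceptional points because the biorthogonal normalisation blows up there, so even the convergence of the defining integral needs justification before any contribution can be estimated.

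The paper avoids all of this with a global symmetry argument that never requires a band-by-band PT gauge: writing $\zak{j}(v)$ for the Zak phase with $v_1=v$, it establishes both $\zak{j}(v)=\zak{\sigma(j)}(\bar{v})$ (from $\overline{\gencapmat}=P\gencapmat P^{-1}$ together with $\overline{\capmat}=\mathcal{C}^{-\alpha}$ and the $\alpha\mapsto-\alpha$ symmetry of the integral) and $\zak{j}(v)=-\zak{\sigma(j)}(\bar{v})$ (from the explicit eigenvectors \eqref{eq: U(alpha) gen cap mat}, which show that conjugating the material parameters permutes and conjugates the eigenvectors, combined with \cref{lemma: approx Zak formula}); the two relations together force $\zak{j}=\BO(\delta)$. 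If you want to keep your PT route, you must supply an analogous pairing argument on the broken interval, or restrict to parameter ranges where the spectrum of $\gencapmat$ is real on all of $Y^*$ --- which is not the situation of the lemma.
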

\begin{proof}
    For this proof, we will denote by $\zak{j}(v)$ the Zak phase for $v_1=v$. Let also $\sigma = (1\ 2)$ be the permutation of two elements. Asymptotically, the Zak phase solely depends on the eigenvectors of the generalised capacitance matrix. We first show that $\zak{j}(v) = \zak{\sigma(j)}(\bar{v})$. To this end, we remark that using the definition of the capacitance matrix
    \begin{align*}
        \gencapmat= V^2 \capmat = 
        \begin{pmatrix}
            v^2 & 0 \\
            0 & \bar{v}^2
        \end{pmatrix}
        \begin{pmatrix}
            a & b(\alpha) \\
            \bar{b(\alpha)} & a
        \end{pmatrix}
    \end{align*}
    and the permutation matrix
    \begin{align*}
    P = \begin{pmatrix}
        0 & 1 \\
        1 & 0
    \end{pmatrix},
    \end{align*}
    we obtain the following relations:
    \begin{align*}
    \bar{V^2} = PV^2P\inv \quad\text{and}\quad \bar{\capmat} = PC^\alpha P = \mathcal{C}^{-\alpha}.
    \end{align*}
    So, 
    \begin{align*}
    \bar{\gencapmat} = P V^2 P\inv P \capmat P\inv = P \gencapmat
     P\inv, 
    \end{align*}
    and $\bar{\gencapmat}$ and $\gencapmat$ are similar via a permutation matrix. However, $\bar{\gencapmat} = \bar{V^2}\mathcal{C}^{-\alpha}$ and 
    so the eigenvectors of $\bar{V^2}\mathcal{C}^{-\alpha}$ are a permutation of the eigenvectors of $\gencapmat$. By symmetry around the origin of the Brillouin zone and \Cref{def: Zak phase Non Hermitian} of the Zak phase, we conclude that $\zak{j}(v) = \zak{\sigma(j)}(\bar{v})$.
    
    We now show that $\zak{j}(v) = -\zak{\sigma(j)}(\bar{v})$, which will complete the proof. Remark that the eigenvectors of the capacitance matrix given by \eqref{eq: U(alpha) gen cap mat} show that complex conjugating both material parameters leads to permuted and conjugated eigenvectors. \Cref{lemma: approx Zak formula} leads to the desired conclusion.
\end{proof}
\subsection{Localised edge modes generated by material-parameter defects}
\label{sec: non-Hermitian edge modes}
We have shown in \Cref{sec: Hermitian} that defects in the periodicity of the system can lead to edge modes. Recently, it has been shown that edge modes can also be generated in the non-Hermitian case via defects in the material parameters rather than in the geometry \cite{ammari.hiltunen2020}. We follow a similar approach to the one in  \cite{ammari.hiltunen2020},
showing that for a one dimensional chain of resonators one can explicitly identify the edge modes.

In this section, we consider the case of equally spaced dimers (i.e.,  with two identical resonators per cell), that is,
\begin{align}
\label{eq: geometrical assyumption edgemode non Hermitian}
    N=2,\quad \ell_1=\ell_2,\quad s_1=s_2.
\end{align}
We denote by $v_i^{(m)}$ the material parameter of the $i$-th resonator of the $m$-th dimer and similarly for the resonator itself.
\begin{figure}[h]
    \centering
    \begin{adjustbox}{width=\textwidth}
        \begin{tikzpicture}
        \draw[-,thick,dotted] (0.5,-1) -- (0.5,2);
        \draw[|-|,dashed] (0,1) -- (1,1);
        \node[above] at (0.5,1) {};
        \draw[|-|] (1,0) -- (2,0);
        \node[below] at (1.5,0) {$v_1^{(0)}$};
        \draw[-,dotted] (1,0) -- (1,1);
        
        \draw[|-|,dashed] (2,1) -- (3,1);
        \node[above] at (2.5,1) {};
        \draw[|-|] (3,0) -- (4,0);
        \node[below] at (3.5,0) {$v_2^{(0)}$};
        \draw[-,dotted] (2,0) -- (2,1);
        \draw[-,dotted] (3,0) -- (3,1);
        \draw[-,dotted] (4,0) -- (4,1);
        
        \begin{scope}[shift={(+4,0)}]
        \draw[|-|,dashed] (0,1) -- (1,1);
        \node[above] at (0.5,1) {};
        \draw[|-|] (1,0) -- (2,0);
        \node[below] at (1.5,0) {$v_1^{(1)}$};
        \draw[-,dotted] (1,0) -- (1,1);
        
        \draw[|-|,dashed] (2,1) -- (3,1);
        \node[above] at (2.5,1) {};
        \draw[|-|] (3,0) -- (4,0);
        \node[below] at (3.5,0) {$v_2^{(1)}$};
        \draw[-,dotted] (2,0) -- (2,1);
        \draw[-,dotted] (3,0) -- (3,1);
        \node at (4.5,.5) {\dots};
        \end{scope}
        
\begin{scope}[shift={(-4,0)}]

        \draw[|-|,dashed] (0,1) -- (1,1);
        \node[above] at (0.5,1) {};
        \draw[|-|] (1,0) -- (2,0);
        \node[below] at (1.5,0) {$v_2^{(-1)}$};
        \draw[-,dotted] (1,0) -- (1,1);
        
        \draw[|-|,dashed] (2,1) -- (3,1);
        \node[above] at (2.5,1) {};
        \draw[|-|] (3,0) -- (4,0);
        \node[below] at (3.5,0) {$v_1^{(-1)}$};
        \draw[-,dotted] (2,0) -- (2,1);
        \draw[-,dotted] (3,0) -- (3,1);
        \draw[-,dotted] (4,0) -- (4,1);
        \end{scope}
        
        \begin{scope}[shift={(-8,0)}]
        \node at (.5,.5) {\dots};
        \draw[|-|] (1,0) -- (2,0);
        \node[below] at (1.5,0) {$v_2^{(-2)}$};
        
        \draw[|-|,dashed] (2,1) -- (3,1);
        \node[above] at (2.5,1) {};
        \draw[|-|] (3,0) -- (4,0);
        \node[below] at (3.5,0) {$v_1^{(-2)}$};
        \draw[-,dotted] (2,0) -- (2,1);
        \draw[-,dotted] (3,0) -- (3,1);
        \draw[-,dotted] (4,0) -- (4,1);
        \end{scope}
        
        \end{tikzpicture}
    \end{adjustbox}
    \caption{Infinite structure with material parameter defect.}
    \label{fig: material paramter defect}
\end{figure}
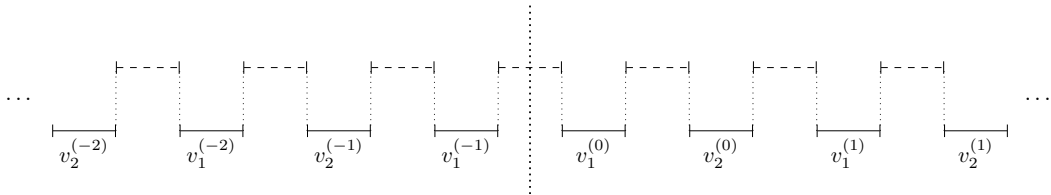
\begin{definition}[Localized edge mode]
    A solution $u$ to \eqref{eqn:DTN probelm} is said to be a simple eigenmode if it corresponds to a simple eigenvalue $\omega$ scaling as $\BO(\delta)$. A solution is said to be \emph{localised} if it is bounded in the $L^2$-sense, that is, $\int_{\R} \vert u(x)\vert^2 \dd x<\infty$.
\end{definition}

As we have seen in \Cref{prop: asymptotic of uj} and \Cref{lemma: approx of eigemodes via eigenvectors}, inside the resonators a subwavelength resonant mode is almost constant
\begin{align}
u(x) = u_i^m + \BO(\delta) \quad x\in D_i^{(m)} . \label{eq: almost constant inside resonator}
\end{align}

The following proposition is \cite[Proposition 4.2]{ammari.hiltunen2020}.
\begin{proposition}\label{prop: condition matrices localized solution edge mode}
Any localized solution $u$ to \eqref{eqn:DTN probelm} corresponding to a subwavelength frequency $\omega$ satisfies
\begin{align}
\frac{1}{\rho}
\capmat
% \begin{pmatrix}
%     C^\alpha_{11} & C^\alpha_{12} \\
%     C^\alpha_{21} & C^\alpha_{22}
% \end{pmatrix}
\begin{pmatrix}
    \sum_{m\in\Z} u_1^m e^{\i \alpha m L}  \\
    \sum_{m\in\Z} u_2^m e^{\i \alpha m L} 
\end{pmatrix} = \omega^2
% \begin{pmatrix}
%     \ell_1 & 0 \\
%     0 & \ell_2
% \end{pmatrix}
\begin{pmatrix}
    \sum_{m\in\Z} \frac{u_1^m e^{\i \alpha m L}}{(v_1^m)^2} \\
    \sum_{m\in\Z} \frac{u_2^m e^{\i \alpha m L}}{(v_2^m)^2}
\end{pmatrix}.
 \label{eq: localized solution must satisfy}
\end{align}
\end{proposition}

We consider the topological defect
\begin{align}
v_1^{(m)} = \begin{cases}
  v_1 & m\leq 0, \\
  v_2 & m > 0,
\end{cases}
\quad \mbox{and} \quad 
v_2^{(m)} = \begin{cases}
  v_2 & m\leq 0,\\
  v_1 & m > 0.
\end{cases}
\label{eq:topological defect}
\end{align}

The following lemma, which is \cite[Lemma 4.3]{ammari.hiltunen2020}, exploits the symmetry in the defect to obtain a decay rate of the mode.
\begin{lemma} \label{lem: exists a b for U_1 U_2 U_3 U_4}
Let 
\begin{align*}
U_1 = \sum_{m\leq0} u_1^m e^{\i \alpha m L}, \quad
U_2 = \sum_{m>0} u_1^m e^{\i \alpha m L}, \quad
U_3 = \sum_{m\leq 0} u_2^m e^{\i \alpha m L}, \quad
U_4 = \sum_{m> 0} u_2^m e^{\i \alpha m L}.
\end{align*}
Then, there exists some $b\in\C$ independent of $\alpha$ satisfying $\abs{b}< 1$ and
\begin{align*}
U_1 = b U_3,\quad U_4 = b U_2.
\end{align*}
\end{lemma}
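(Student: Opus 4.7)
My plan is to exploit the translation-invariant structure of each of the two halves of the chain separately. On the left half ($m\leq 0$) the material parameters are constant, equal to $(v_1,v_2)$; on the right half ($m>0$) they are constant but equal to the swapped pair $(v_2,v_1)$. Starting from \eqref{eq: localized solution must satisfy}, I would match coefficients of $z^m$ with $z\coloneqq e^{\i\alpha L}$ to extract two-term recurrences for the sequences $(u_1^m)$ and $(u_2^m)$. By construction these recurrences are translation-invariant inside each half, while the identities at $m=0,1$ serve as matching conditions across the defect.

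On each half the recurrences admit Floquet-type solutions $u_i^m=A_iz^m$, and substitution yields a quadratic characteristic equation of the form $(2-B_1)(2-B_2)=(1+z)(1+z^{-1})$ with $B_i\coloneqq s\rho\omega^2/v_i^2$, whose two roots form a reciprocal pair $\{z_0,z_0^{-1}\}$. The $L^2$-summability of $u$ forces on each half the exclusion of the root that would generate an exponentially growing contribution: on the left this selects only the root with $|z_0|>1$, on the right its reciprocal $z_0^{-1}$. Consequently the ratios $u_1^m/u_2^m$ for $m\leq 0$ and $u_2^m/u_1^m$ for $m>0$ are each constant in $m$; calling them $b_L$ and $b_R$, the identities $U_1=b_LU_3$ and $U_4=b_RU_2$ follow by collapsing the resulting geometric series in $z$. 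The eigenvector relations give $b_L=(1+z_0^{-1})/(2-B_1)$ and $b_R=(2-B_2)/(1+z_0)$, and the characteristic equation is exactly the identity forcing $b_L=b_R=:b$. Computing $b^2=b_Lb_R=(2-B_2)/[z_0(2-B_1)]$ then yields $|b|^2=|2-B_2|/(|z_0|\,|2-B_1|)$: in the subwavelength regime $B_i=\BO(\omega^2)$, so $|2-B_2|/|2-B_1|\to 1$ and the strict inequality $|z_0|>1$ forces $|b|<1$.

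The main obstacle is the \emph{pure-Floquet} step: justifying that $L^2$-summability reduces a generic two-parameter family of bulk-recurrence solutions $Az_0^m+A'z_0^{-m}$ to a single Floquet component on each half. Once this reduction is in place, all the remaining identifications and bounds are essentially algebraic, since the matching conditions at $m=0,1$ only constrain the overall amplitudes of the left and right Floquet components and do not affect the $m$-independent ratio $b$ within either half.
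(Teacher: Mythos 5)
The paper itself gives no proof of this lemma; it is quoted verbatim as \cite[Lemma~4.3]{ammari.hiltunen2020}. Your overall strategy---write \eqref{eq: localized solution must satisfy} as a real-space three-term recurrence, observe that it has periodic coefficients on each half-line, represent the solution there by Floquet modes $A_i z_0^{\pm m}$, use square-summability to discard the growing branch on each side, and read off the constant intra-cell ratios $b_L$, $b_R$ whose equality is precisely the characteristic equation---is sound and is essentially the intended argument. The step you single out as the main obstacle is in fact standard: the solution space of a second-order linear recurrence on a half-line is two-dimensional, spanned by the Floquet solutions whenever the multipliers $z_0,z_0^{-1}$ are distinct, and $\ell^2$-decay at $\mp\infty$ selects exactly one of them; the excluded cases $\abs{z_0}=1$ and $z_0=\pm1$ are precisely those in which no localised mode exists, so they may be ruled out by hypothesis.

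The genuine gap is in the final inequality $\abs{b}<1$, on three counts. First, the asymptotic ``$B_i=\BO(\omega^2)$, hence $\abs{2-B_2}/\abs{2-B_1}\to1$'' is false in the relevant scaling: $\omega^2=\BO(\delta)$ but $\rho=\rho_b/\delta\to\infty$, so $B_i=s\rho\omega^2/v_i^2$ is an $\BO(1)$ quantity with $B_1/B_2=v_2^2/v_1^2\neq1$ at leading order; the ratio $\abs{2-B_2}/\abs{2-B_1}$ does not tend to $1$. (This is also visible from \eqref{eq: b_pm}: the admissible ratios $b_\pm$ depend on $v_1^2/v_2^2$ at leading order, with $b_+b_-=-v_1^2/v_2^2$, which would be impossible if $b$ degenerated to $\pm1$ as $\delta\to0$.) Second, the exponent of $z_0$ in your formula $b^2=b_Lb_R$ depends on the orientation of the hopping terms (whether the equation for $u_1^m$ couples to $u_2^{m+1}$ or $u_2^{m-1}$), which you have not pinned down from the paper's $\capmat$; reading off the signs of the $e^{\pm\i\alpha L}$ entries in \cref{def: Capacitance matrix} I obtain $b^2=z_0\,(2-B_2)/(2-B_1)$ with $\abs{z_0}>1$, i.e.\ the reciprocal of your $z_0$-factor, so the inequality could just as well come out reversed under your bookkeeping. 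Third, and most importantly, $\abs{b}<1$ cannot be obtained by soft asymptotics at all: \cref{thm: non-Hermitian edge mode} explicitly notes that for $v_1=\bar{v_2}=v$ with $\sqrt{8}\abs{\Im(v^2)}>\abs{\Re(v^2)}$ both candidate ratios $b_\pm$ have modulus exactly $1$ and no localised mode exists. So the bound must be extracted from the explicit expression \eqref{eq: b_pm} (equivalently from $b_+b_-=-v_1^2/v_2^2$ together with the localisation hypothesis), not deduced from $\abs{z_0}>1$ plus a limit $B_i\to0$ that does not take place.
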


%We can interpret the $b$ from \Cref{lem: exists a b for U_1 U_2 U_3 U_4} as the decay of the localised edge mode between the two resonators.
Using the same notation as in \Cref{lem: exists a b for U_1 U_2 U_3 U_4}, we have
\begin{align*}
    \sum_{m\in\Z} u_1^m e^{\i \alpha m L} = U_2 + U_1 = U_2 + %\frac{\ell_1}{\ell_2}
    b U_3, \quad \sum_{m\in\Z} u_2^m e^{\i \alpha m L} = U_3 + U_4 = U_3 + %\frac{l_2}{l_1}
    b U_2.
\end{align*}

Furthermore, the topological defect \eqref{eq:topological defect} implies
\begin{align*}
    \sum_{m\in\Z} \frac{u_1^m e^{\i \alpha m L}}{\left(v_1^{(m)}\right)^2} = \frac{U_2}{v_2^2} + \frac{U_1}{v_1^2} = \frac{U_2}{v_2^2} + \frac{1}{v_1^2}b U_3, \quad \sum_{m\in\Z} \frac{u_2^m e^{\i \alpha m L}}{\left(v_2^{(m)}\right)^2} = \frac{U_3}{v_2^2} + \frac{U_4}{v_1^2} = \frac{U_3}{v_2^2} + \frac{1}{v_1^2 }b U_4.
\end{align*}

This allows us to rewrite \Cref{prop: condition matrices localized solution edge mode} as follows.

\begin{proposition}\label{prop: localized mode if eigenvalue independent of quasiperiodicity}
Assume that a structure as in \eqref{eq: geometrical assyumption edgemode non Hermitian} has a defect as in \eqref{eq:topological defect}. Then, there is a localised mode in the subwavelength regime corresponding to the frequency $\omega$ only if $B\inv \capmat A$ has an eigenvalue $\mu\in\C$ independent of the quasiperiodicity $\alpha$. Here,
\begin{align*}
A = \begin{pmatrix}
    1 & b \\
    b & 1
\end{pmatrix}, \quad 
B = \frac{1}{\delta}\begin{pmatrix}
    v_2^{-2} & bv_1^{-2} \\
    bv_1^{-2} & v_2^{-2}
\end{pmatrix}.
\end{align*}
\end{proposition}

Particular of the one-dimensional case is the explicit $\alpha$-dependence of the capacitance matrix, as seen in \Cref{def: Capacitance matrix}, which allows us to prove the next theorem.

Remark that it has been shown in \cite[Section 4.2]{ammari.hiltunen2020} that the decay rate of edge modes in the case of \eqref{eq: geometrical assyumption edgemode non Hermitian} must be either of
\begin{align}
b_{\pm} = \frac{1}{2} \left(3\left(1-\frac{v_1^2}{v_2^2}\right) \pm \sqrt{9\left(1-\frac{v_1^2}{v_2^2}\right)^2 + \frac{4 v_1^2}{v_2^2}}\right) \label{eq: b_pm},
\end{align}
whichever has magnitude smaller than $1$.
\begin{theorem}\label{thm: non-Hermitian edge mode}
  Assume that a one-dimensional structure as in \eqref{eq: geometrical assyumption edgemode non Hermitian} has a defect given by \eqref{eq:topological defect}. Then there always exists a simple eigenmode, which --- if $v_1 \neq \bar{v_2}$ or $v_1=\bar{v_2}\coloneqq v$ with $\sqrt{8}\abs{\Im(v^2)}\leq \abs{\Re(v^2)}$ --- is also localised.
  
  The frequency of the mode in the subwavelength regime satisfies
  \begin{align*}
  \omega = \pm\sqrt{\frac{\mu}{\ell_1}} + \BO(\delta),
  \end{align*}
  where 
  \begin{align*}
  \mu = \frac{\delta}{s_1}\frac{8v_1^2(-3v_ 1^2+v_2^2\sqrt{D}+3v_2^2)}{-7v_ 1^2+3v_2^2\sqrt{D}+9v_2^2}
  \end{align*}
  with $D\coloneqq \frac{9 v_{1}^{4}}{v_{2}^{4}} - \frac{14 v_{1}^{2}}{v_{2}^{2}} + 9$.
\end{theorem}
\begin{proof}
    The condition about localisation arises from the form of the decay rate given in \eqref{eq: b_pm}. For $v_1 \neq \bar{v_2}$ or $v_1=\bar{v_2}\coloneqq v$ with $\sqrt{8}\abs{\Im(v^2)}\leq \abs{\Re(v^2)}$ either $b_-$ or $b_+$ must have magnitude smaller than one. However, in the $v_1=\bar{v_2}\coloneqq v$ with $\sqrt{8}\abs{\Im(v^2)}> \abs{\Re(v^2)}$ case both $\vert b_\pm\vert =1$ making it impossible to have localised modes.

    For the eigenvalue computations, we assume without loss of generality that $s_{1}=1$ and introduce it again in the last step.

    The eigenvalues of $B\inv \capmat A$ are given by
    \begin{align*}
        &\mu_{j} = \underbrace{\frac{\delta v_1^2 v_2^2}{b^2v_2^4-v_1^4}}_{\coloneqq K}\left(\mathcal{C}_{11}^\alpha(b^2v_2^2-v_1^2)+b(v_2^2-v_1^2)\Re(\mathcal{C}_{12}^\alpha) \right.\\ &+ \left.(-1)^j \sqrt{(\mathcal{C}_{11}^\alpha(b^2v_2^2-v_1^2)+b(v_2^2-v_1^2)\Re(\mathcal{C}_{12}^\alpha))^2-(b^2-1)(b^2v_2^4-v_1^4)((\mathcal{C}_{11}^\alpha)^2-\abs{\mathcal{C}_{12}^\alpha}^2)}\right)
    \end{align*}
    and we will show that $\mu_0$ is independent of the quasiperiodicity. We assume without loss of generality that $b=b_-$, since the case $b=b_+$ can be proved similarly. Inserting the explicit coefficients of the capacitance matrix, we obtain
    \begin{align*}
        K&\left(\vphantom{\sqrt{v_{1}^{4} v_{2}^{4} \left(\left(2b^{2}-2\right) \left(b^{2} v_{2}^{4} - v_{1}^{4}\right) \left(\cos{\left(L \alpha \right)} - 1\right) + \left(2 b^{2} v_{2}^{2} + b \left(v_{1}^{2} - v_{2}^{2}\right) \left(\cos{\left(L \alpha \right)} + 1\right) - 2 v_{1}^{2}\right)^{2}\right)}}  \left(2 b^{2} v_{2}^{2} + b \left(v_{1}^{2} - v_{2}^{2}\right) \left(\cos{\left(L \alpha \right)} + 1\right) - 2 v_{1}^{2}\right) \right.
        \\&+ \left.\sqrt{v_{1}^{4} v_{2}^{4} \left(\left(2b^{2}-2\right) \left(b^{2} v_{2}^{4} - v_{1}^{4}\right) \left(\cos{\left(L \alpha \right)} - 1\right) + \left(2 b^{2} v_{2}^{2} + b \left(v_{1}^{2} - v_{2}^{2}\right) \left(\cos{\left(L \alpha \right)} + 1\right) - 2 v_{1}^{2}\right)^{2}\right)}\right),
    \end{align*}
    while in order to show independence from $\alpha$, it is enough to consider the term
    \begin{align}
        &b \left(v_{1}^{2} - v_{2}^{2}\right) \left(\cos{\left(L \alpha \right)} + 1\right) \nonumber
        \\&+ \sqrt{\left(2 b^{2} - 2\right) \left(b^{2} v_{2}^{4} - v_{1}^{4}\right) \left(\cos{\left(L \alpha \right)} - 1\right) + \left(2 b^{2} v_{2}^{2} + b \left(v_{1}^{2} - v_{2}^{2}\right) \left(\cos{\left(L \alpha \right)} + 1\right) - 2 v_{1}^{2}\right)^{2}}.
        \label{eq: alpha dep term eigenvalue edgemode}
    \end{align}
    Inserting into \eqref{eq: alpha dep term eigenvalue edgemode} the value of $b$ from \eqref{eq: b_pm}, we obtain after some careful algebraic manipulations
    \begin{align*}
        &\sqrt{2}\left(v_{1}^{2} - v_{2}^{2}\right) \sqrt{\underbrace{\left(9 v_{1}^{4} - 3 v_{1}^{2} v_{2}^{2} \sqrt{D} - 16 v_{1}^{2} v_{2}^{2} + 3 v_{2}^{4} \sqrt{D} + 9 v_{2}^{4}\right)}_{\coloneqq B}} (\cos{\left(L \alpha \right)} + 3) \\
        &- \left(v_{1}^{2} - v_{2}^{2}\right) \underbrace{\left(3 v_{1}^{2} - v_{2}^{2} \left(\sqrt{D} + 3\right)\right)}_{\coloneqq A} \left(\cos{\left(L \alpha \right)} + 1\right)
    \end{align*}
    with $D\coloneqq \frac{9 v_{1}^{4}}{v_{2}^{4}} - \frac{14 v_{1}^{2}}{v_{2}^{2}} + 9$. In order to verify independence from the quasiperiodicity, it is now enough to prove that $2 B = A ^2$. A direct computation shows that
    \begin{align*}
    A^2 &= Dv_2^4+6\sqrt{D}v_1^2v_2^2 - 6 \sqrt{D} v_2^4 + 9 v_1^4 - 18v_1^2v_2^2+9 v_2^4 \\
    &= 18 v_{1}^{4} - 6 v_{1}^{2} v_{2}^{2} \sqrt{D} - 32 v_{1}^{2} v_{2}^{2} + 6 v_{2}^{4} \sqrt{D} + 18 v_{2}^{4} \\
    &= 2 B.
    \end{align*}
    In particular, we have 
    \begin{align*}
    \mu_0 = \frac{\delta}{s_1}\frac{8v_1^2(-3v_ 1^2+v_2^2\sqrt{D}+3v_2^2)}{-7v_ 1^2+3v_2^2\sqrt{D}+9v_2^2}.
    \end{align*}
\end{proof}

\begin{figure}[p]
    \centering
     \begin{subfigure}[t]{0.48\textwidth}
        \centering
        \includegraphics[width=\textwidth,valign=t]{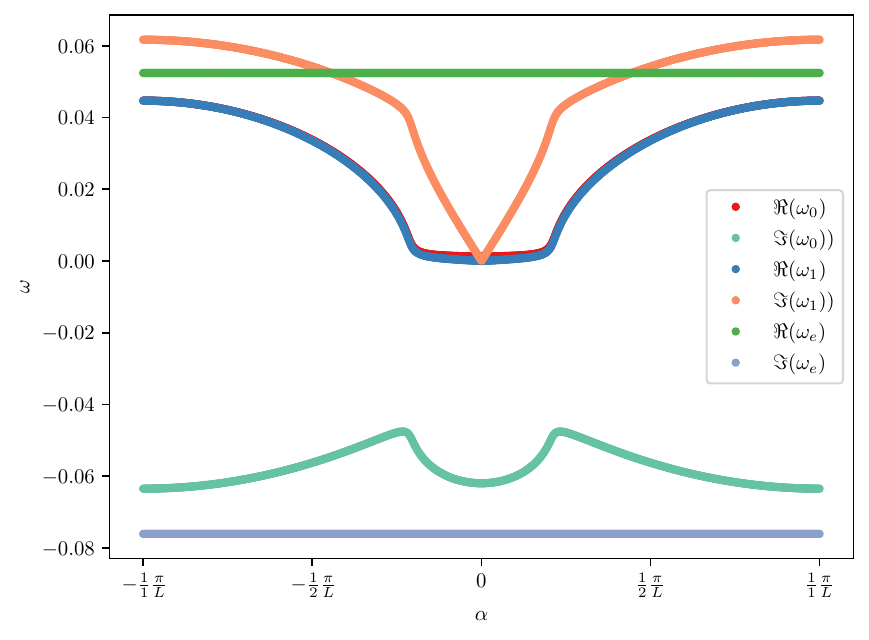}
        \caption{Representation depending on the quasiperiodicity.}
        \label{subfig: nonHermitian omegasalphas}
    \end{subfigure}
    \hfill
    \begin{subfigure}[t]{0.48\textwidth}
        \centering
        \includegraphics[width=\textwidth,valign=t]{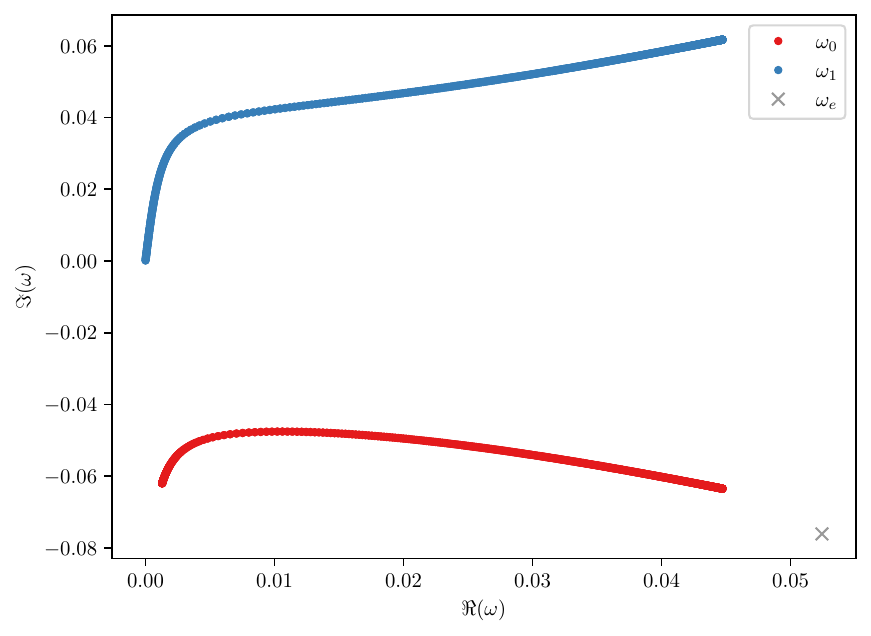}
        \caption{Traces on the complex plane.}
        \label{subfig: nonHermitian omegas in C}
    \end{subfigure}
        \\[1mm]
        \begin{subfigure}[t]{0.48\textwidth}
            \centering
            \includegraphics[width=\textwidth,valign=t]{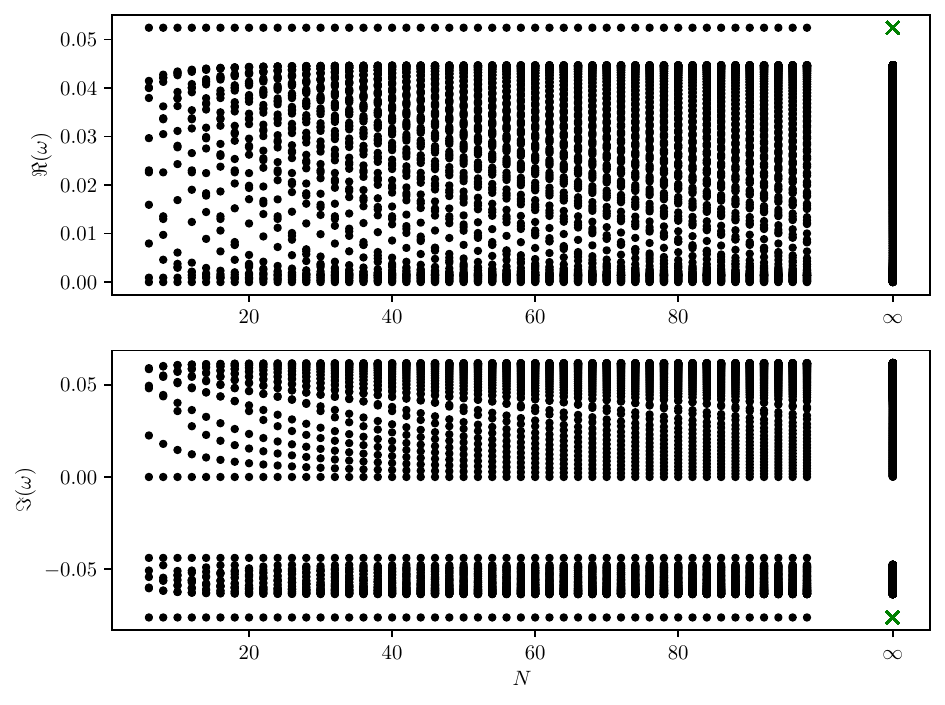}
            \caption{{Convergence of the subwavelength resonances with an increasing number of resonators. The solid line shows the spectrum of a periodic structure, while the green cross represents the predicted edge mode frequency.}}
            \label{subfig: convercence nonHermitian re and im}
    \end{subfigure}\hfill
    \begin{subfigure}[t]{0.48\textwidth}
        \centering
        \includegraphics[width=\textwidth,valign=t]{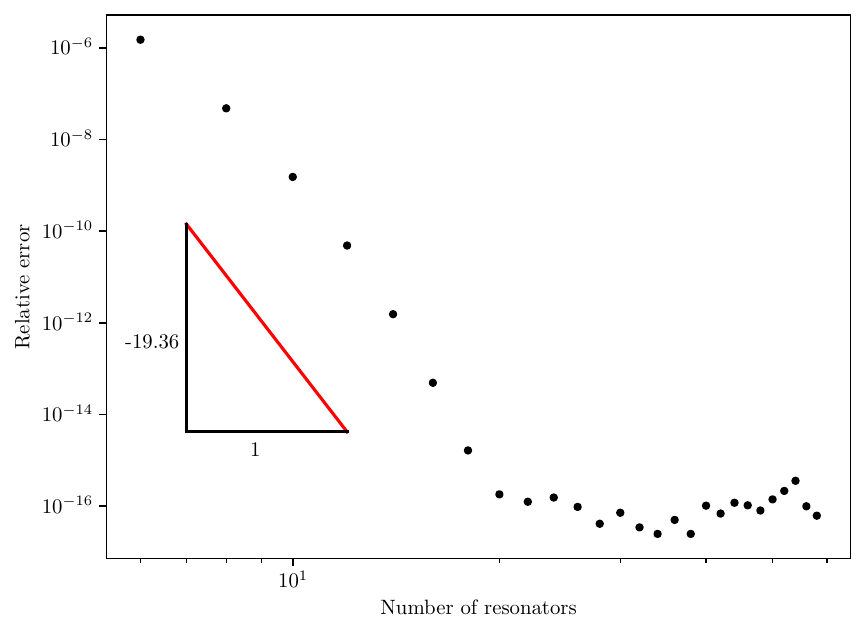}
        \caption{Convergence of the relative error $\abs{\omega_n-\omega}$ where $\omega_i$ is the edge subwavelength resonance in a finite structure with $i$ resonators and $\omega$ is the predicted subwavelength resonance from \Cref{thm: non-Hermitian edge mode}. The plot is in log-log scale.}
        \label{subfig: convercence realtive error nonHermitian}
\end{subfigure}
\\[1mm]
\begin{subfigure}[b]{0.48\textwidth}
    \centering
    \includegraphics[width=\textwidth]{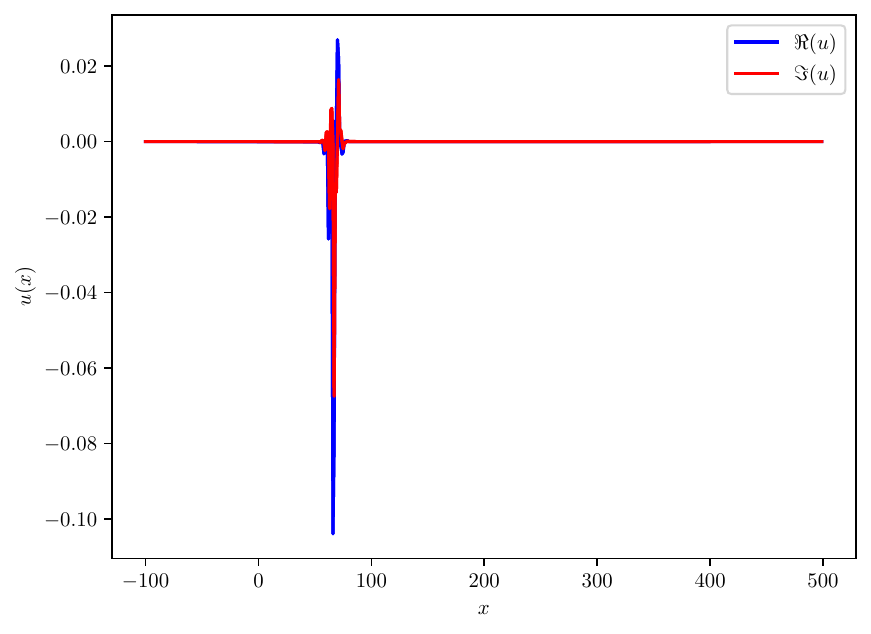}
    \caption{Localised edge mode for a finite but large array of $N=100$ resonators having a material parameter defect.}
    \label{subfig: plot non Hermitian edge mode}
\end{subfigure}
\hfill
\begin{subfigure}[b]{0.48\textwidth}
    \centering
         \includegraphics[width=\textwidth,valign=t]{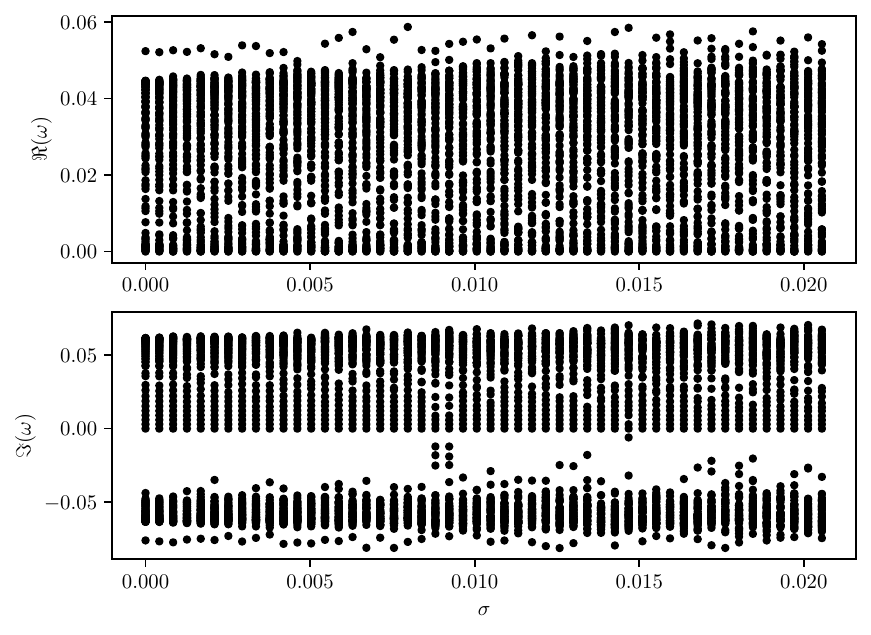}
         \caption{Stability of the edge mode with respect to perturbations in the material parameters.\\ \hfill }
         \label{subfig: stability wrt perturbations nonHermtian edgemode}
\end{subfigure}
    \caption{Edge modes generated by material parameter defects. For the infinite structure we used $N=2$, $\ell_i=1$, $s_i=1$, $v_1=1+1.38\i$, $v_2=1-1.42\i$.}
    \label{fig:ks conditon edge modes}
\end{figure}

As in \Cref{sec: Hermitian edge modes}, we provide some numerical simulations to visualise the edge mode. We first compute the bands $\omega^\alpha$ for the left infinite structure. These are shown in \Cref{subfig: nonHermitian omegasalphas} while \Cref{subfig: nonHermitian omegas in C} shows their traces in $\C$. In these plots, we add separately the edge mode frequency predicted by \Cref{thm: non-Hermitian edge mode}.

In \Cref{subfig: plot non Hermitian edge mode}, we show the edge mode computed for a finite but large array of resonators.

As \Cref{thm: non-Hermitian edge mode} provides an explicit formula for the edge mode frequency, it is particularly interesting to compare the subwavelength resonances of a finite structure with an increasing number of resonators with the band structure and predicted edge mode frequency --- both structures having the same material parameter defect. We do this in \Cref{subfig: convercence nonHermitian re and im}. In \Cref{subfig: convercence realtive error nonHermitian} we show that the convergence of the relative error $\abs{\omega_n - \omega}$ where $\omega_i$ is the subwavelength edge resonance in a finite structure with $i$ resonators and $\omega$ is the subwavelength edge resonance predicted by \Cref{thm: non-Hermitian edge mode} scales roughly as $\BO(n^{-19})$ and is in the order of magnitude of the machine precision with structures composed of 20 or more resonators.

As in the Hermitian case, we want to show that the edge mode is robust with respect to perturbations, this time in the material parameters. In \Cref{subfig: stability wrt perturbations nonHermtian edgemode}, we compute the subwavelength resonances of a finite but large array of $N=100$ resonators having a material parameter defect with some random perturbation given by
\begin{align*}
    \tilde{v_i} = v_i + (1\ \i)\cdot \epsilon_i, \quad \epsilon_i\sim \mathcal{N}(0,\Sigma),
\end{align*}
where
\begin{align*}
    \Sigma = \sigma\diag\left(\frac{1}{\sqrt{2}\abs{v_i}}\right).
\end{align*}
We remark that the edge mode predicted by \Cref{thm: non-Hermitian edge mode} is stable with respect to perturbations in the material parameters. The stability is, however, less strong than in the Hermitian case. This is to be expected due to the non-quantisation of the Zak phase in this setup. \Cref{subfig: stability wrt perturbations nonHermtian edgemode} shows that there is a second isolated frequency supported by this setup not predicted by \Cref{thm: non-Hermitian edge mode}. However, this frequency is not isolated from the bulk even for very small perturbations.

\section*{Acknowledgments}
This work was supported in part by the Swiss National Science Foundation grant number 200021-200307. The authors thank Erik Orvehed Hiltunen and Bryn Davies for insightful discussions.
\bigskip
%\begin{absolutelynopagebreak}
\begin{center}
    \large
    \textbf{Code availability}
\end{center}
\medskip
The data that support the findings of this study are openly available at \newline \href{https://gitlab.math.ethz.ch/silvioba/edge-modes-1d}{https://gitlab.math.ethz.ch/silvioba/edge-modes-1d}.
%\end{absolutelynopagebreak}
\printbibliography

@article{ammari.davies.ea2020,
  title = {Topologically Protected Edge Modes in One-Dimensional Chains of Subwavelength Resonators},
  author = {Ammari, Habib and Davies, Bryn and Hiltunen, Erik Orvehed and Yu, Sanghyeon},
  date = {2020},
  journaltitle = {Journal de Math\'ematiques Pures et Appliqu\'ees},
  shortjournal = {J. Math. Pures Appl. (9)},
  volume = {144},
  pages = {17--49},
  issn = {0021-7824},
  doi = {10.1016/j.matpur.2020.08.007},
  url = {https://doi.org/10.1016/j.matpur.2020.08.007},
  fjournal = {Journal de Math\'ematiques Pures et Appliqu\'ees. Neuvi\`eme S\'erie},
  mrclass = {35J05 (35C20 35P20)},
  mrnumber = {4175443}
}

@misc{ammari.davies.ea2021,
  title = {Functional Analytic Methods for Discrete Approximations of Subwavelength Resonator Systems},
  author = {Ammari, Habib and Davies, Bryn and Hiltunen, Erik Orvehed},
  date = {2021},
  publisher = {{arXiv}},
  doi = {10.48550/ARXIV.2106.12301},
  url = {https://arxiv.org/abs/2106.12301},
  copyright = {arXiv.org perpetual, non-exclusive license},
  keywords = {35C20,35J05,35P20,74J20,Analysis of PDEs (math.AP),FOS: Mathematics,FOS: Physical sciences,Mathematical Physics (math-ph)}
}

@incollection{ammari.davies.ea2022a,
  title = {Wave Interaction with Subwavelength Resonators},
  booktitle = {Applied Mathematical Problems in Geophysics},
  author = {Ammari, Habib and Davies, Bryn and Hiltunen, Erik Orvehed and Lee, Hyundae and Yu, Sanghyeon},
  date = {2022},
  origdate = {2022},
  series = {Lecture Notes in Math.},
  volume = {2308},
  pages = {23--83},
  publisher = {{Springer, Cham}},
  doi = {10.1007/978-3-031-05321-4\_3},
  url = {https://doi.org/10.1007/978-3-031-05321-4_3},
  mrclass = {35Q74 (47N20)},
  mrnumber = {4472171}
}

@article{ammari.davies.ea2022Exceptional,
  title = {Exceptional {{Points}} in {{Parity}}\textendash{{Time-Symmetric Subwavelength Metamaterials}}},
  author = {Ammari, Habib and Davies, Bryn and Hiltunen, Erik Orvehed and Lee, Hyundae and Yu, Sanghyeon},
  date = {2022},
  journaltitle = {SIAM Journal on Mathematical Analysis},
  shortjournal = {SIAM J. Math. Anal.},
  volume = {54},
  number = {6},
  pages = {6223--6253},
  issn = {0036-1410},
  doi = {10.1137/22M1469821},
  url = {https://doi.org/10.1137/22M1469821},
  fjournal = {SIAM Journal on Mathematical Analysis},
  mrclass = {35J05 (35C20 35P20)},
  mrnumber = {4512271}
}

@misc{ammari.davies.ea2023Spectral,
  title = {Spectral Convergence of Defect Modes in Large Finite Resonator Arrays},
  author = {Ammari, Habib and Davies, Bryn and Hiltunen, Erik Orvehed},
  date = {2023},
  publisher = {{arXiv}},
  doi = {10.48550/ARXIV.2301.03402},
  url = {https://arxiv.org/abs/2301.03402},
  copyright = {Creative Commons Attribution 4.0 International},
  keywords = {Analysis of PDEs (math.AP),Chaotic Dynamics (nlin.CD),FOS: Mathematics,FOS: Physical sciences,Mathematical Physics (math-ph)}
}

@article{ammari.davies2020,
  title = {Mimicking the Active Cochlea with a Fluid-Coupled Array of Subwavelength {{Hopf}} Resonators},
  author = {Ammari, Habib and Davies, Bryn},
  date = {2020},
  journaltitle = {Proceedings A},
  shortjournal = {Proc. A.},
  volume = {476},
  number = {2234},
  pages = {20190870, 18},
  issn = {1364-5021},
  doi = {10.1098/rspa.2019.0870},
  url = {https://doi.org/10.1098/rspa.2019.0870},
  fjournal = {Proceedings A},
  mrclass = {74H45 (76Z05)},
  mrnumber = {4088968}
}

@article{ammari.fitzpatrick.ea2018,
  title = {Minnaert Resonances for Acoustic Waves in Bubbly Media},
  author = {Ammari, Habib and Fitzpatrick, Brian and Gontier, David and Lee, Hyundae and Zhang, Hai},
  date = {2018},
  journaltitle = {Ann. Inst. H. Poincar\'e C Anal. Non Lin\'eaire},
  volume = {35},
  number = {7},
  pages = {1975--1998},
  issn = {0294-1449},
  doi = {10.1016/j.anihpc.2018.03.007},
  url = {https://doi.org/10.1016/j.anihpc.2018.03.007},
  fjournal = {Annales de l'Institut Henri Poincar\'e C. Analyse Non Lin\'eaire},
  mrclass = {35J05 (35C20 35P25 35R30)},
  mrnumber = {3906861},
  mrreviewer = {Lu\'is P. Castro}
}

@book{ammari.fitzpatrick.ea2018a,
  title = {Mathematical and Computational Methods in Photonics and Phononics},
  author = {Ammari, Habib and Fitzpatrick, Brian and Kang, Hyeonbae and Ruiz, Matias and Yu, Sanghyeon and Zhang, Hai},
  date = {2018},
  series = {Mathematical Surveys and Monographs},
  volume = {235},
  pages = {viii+509},
  publisher = {{American Mathematical Society, Providence, RI}},
  doi = {10.1090/surv/235},
  url = {https://doi.org/10.1090/surv/235},
  isbn = {978-1-4704-4800-4},
  mrclass = {78-01 (31A10 35Q60 74J20 78A25 78A45 78A60)},
  mrnumber = {3837172}
}

@article{ammari.fitzpatrick.ea2020,
  title = {Honeycomb-Lattice {{Minnaert}} Bubbles},
  author = {Ammari, Habib and Fitzpatrick, Brian and Hiltunen, Erik Orvehed and Lee, Hyundae and Yu, Sanghyeon},
  date = {2020},
  journaltitle = {SIAM Journal on Mathematical Analysis},
  shortjournal = {SIAM J. Math. Anal.},
  volume = {52},
  number = {6},
  pages = {5441--5466},
  issn = {0036-1410},
  doi = {10.1137/19M1281782},
  url = {https://doi.org/10.1137/19M1281782},
  mrnumber = {4169751}
}

@misc{ammari.hiltunen2020,
  title = {Edge Modes in Active Systems of Subwavelength Resonators},
  author = {Ammari, Habib and Hiltunen, Erik Orvehed},
  date = {2020},
  publisher = {{arXiv}},
  doi = {10.48550/ARXIV.2006.05719},
  url = {https://arxiv.org/abs/2006.05719},
  copyright = {arXiv.org perpetual, non-exclusive license},
  keywords = {35C20,35J05,35P20,Analysis of PDEs (math.AP),FOS: Mathematics}
}

@article{ammari.li.ea2023Mathematical,
  title = {Mathematical Analysis of Electromagnetic Scattering by Dielectric Nanoparticles with High Refractive Indices},
  author = {Ammari, Habib and Li, Bowen and Zou, Jun},
  date = {2023},
  journaltitle = {Transactions of the American Mathematical Society},
  shortjournal = {Trans. Amer. Math. Soc.},
  volume = {376},
  number = {1},
  pages = {39--90}
}

@article{ammari.millien.ea2017Mathematical,
  title = {Mathematical Analysis of Plasmonic Nanoparticles: The Scalar Case},
  author = {Ammari, Habib and Millien, Pierre and Ruiz, Matias and Zhang, Hai},
  date = {2017},
  journaltitle = {Archive for Rational Mechanics and Analysis},
  shortjournal = {Arch. Ration. Mech. Anal.},
  volume = {224},
  number = {2},
  pages = {597--658},
  publisher = {{Springer}}
}

@misc{craster.davies2022,
  title = {Asymptotic Characterisation of Localised Defect Modes: {{Su-Schrieffer-Heeger}} and Related Models},
  author = {Craster, Richard V and Davies, Bryn},
  date = {2022},
  publisher = {{arXiv}},
  doi = {10.48550/ARXIV.2202.07324},
  url = {https://arxiv.org/abs/2202.07324},
  copyright = {Creative Commons Attribution 4.0 International},
  keywords = {35B27,35C20,35J05,74J20,78A50,Analysis of PDEs (math.AP),FOS: Mathematics,FOS: Physical sciences,Mathematical Physics (math-ph),Optics (physics.optics)}
}

@article{devaud.hocquet.ea2008,
  title = {The {{Minnaert}} Bubble: An Acoustic Approach},
  shorttitle = {The {{Minnaert}} Bubble},
  author = {Devaud, Martin and Hocquet, Thierry and Bacri, Jean-Claude and Leroy, Valentin},
  date = {2008-11-01},
  journaltitle = {European Journal of Physics},
  shortjournal = {Eur. J. Phys.},
  volume = {29},
  number = {6},
  pages = {1263--1285},
  issn = {0143-0807, 1361-6404},
  doi = {10.1088/0143-0807/29/6/014},
  url = {https://iopscience.iop.org/article/10.1088/0143-0807/29/6/014},
  urldate = {2023-01-06}
}

@article{drouot.fefferman.ea2020,
  title = {Defect Modes for Dislocated Periodic Media},
  author = {Drouot, A. and Fefferman, Charles L. and Weinstein, Michael I.},
  date = {2020},
  journaltitle = {Communications in Mathematical Physics},
  shortjournal = {Comm. Math. Phys.},
  volume = {377},
  number = {3},
  pages = {1637--1680},
  issn = {0010-3616},
  doi = {10.1007/s00220-020-03787-0},
  url = {https://doi.org/10.1007/s00220-020-03787-0},
  fjournal = {Communications in Mathematical Physics},
  mrclass = {47N50 (81Q10)},
  mrnumber = {4121609},
  mrreviewer = {Alexander B. Watson}
}

@article{drouot.fefferman.ea2020a,
  title = {Defect {{Modes}} for {{Dislocated Periodic Media}}},
  author = {Drouot, A. and Fefferman, C. L. and Weinstein, M. I.},
  date = {2020-08},
  journaltitle = {Communications in Mathematical Physics},
  shortjournal = {Commun. Math. Phys.},
  volume = {377},
  number = {3},
  pages = {1637--1680},
  issn = {0010-3616, 1432-0916},
  doi = {10.1007/s00220-020-03787-0},
  url = {https://link.springer.com/10.1007/s00220-020-03787-0},
  urldate = {2023-01-06},
  langid = {english},
  file = {/scratch/users/silvioba/Zotero/storage/NEQY4TZ8/Drouot et al. - 2020 - Defect Modes for Dislocated Periodic Media.pdf}
}

@article{drouot2021,
  title = {The Bulk-Edge Correspondence for Continuous Dislocated Systems},
  author = {Drouot, Alexis},
  date = {2021},
  journaltitle = {Annales de l'Institut Fourier},
  volume = {71},
  number = {3},
  pages = {1185--1239},
  publisher = {{Association des Annales de l'institut Fourier}},
  doi = {10.5802/aif.3420},
  url = {https://aif.centre-mersenne.org/articles/10.5802/aif.3420/},
  langid = {english}
}

@article{fefferman.lee-thorp.ea2014,
  title = {Topologically Protected States in One-Dimensional Continuous Systems and {{Dirac}} Points},
  author = {Fefferman, Charles L. and Lee-Thorp, James P. and Weinstein, Michael I.},
  date = {2014},
  journaltitle = {Proceedings of the National Academy of Sciences of the United States of America},
  shortjournal = {Proc. Natl. Acad. Sci. USA},
  volume = {111},
  number = {24},
  pages = {8759--8763},
  issn = {0027-8424},
  doi = {10.1073/pnas.1407391111},
  url = {https://doi.org/10.1073/pnas.1407391111},
  fjournal = {Proceedings of the National Academy of Sciences of the United States of America},
  mrclass = {34L40 (82B10)},
  mrnumber = {3263410},
  mrreviewer = {Anatoliy Prykarpatsky}
}

@article{fefferman.lee-thorp.ea2014a,
  title = {Topologically Protected States in One-Dimensional Continuous Systems and {{Dirac}} Points},
  author = {Fefferman, Charles L. and Lee-Thorp, James P. and Weinstein, Michael I.},
  date = {2014},
  journaltitle = {Proceedings of the National Academy of Sciences of the United States of America},
  shortjournal = {Proc. Natl. Acad. Sci. USA},
  volume = {111},
  number = {24},
  pages = {8759--8763},
  issn = {0027-8424},
  doi = {10.1073/pnas.1407391111},
  url = {https://doi.org/10.1073/pnas.1407391111},
  fjournal = {Proceedings of the National Academy of Sciences of the United States of America},
  mrclass = {34L40 (82B10)},
  mrnumber = {3263410},
  mrreviewer = {Anatoliy Prykarpatsky}
}

@article{fefferman.lee-thorp.ea2017,
  title = {Topologically Protected States in One-Dimensional Systems},
  author = {Fefferman, Charles L. and Lee-Thorp, James P. and Weinstein, Michael I.},
  date = {2017},
  journaltitle = {Memoirs of the American Mathematical Society},
  shortjournal = {Mem. Amer. Math. Soc.},
  volume = {247},
  number = {1173},
  pages = {vii+118},
  issn = {0065-9266},
  doi = {10.1090/memo/1173},
  url = {https://doi.org/10.1090/memo/1173},
  fjournal = {Memoirs of the American Mathematical Society},
  isbn = {978-1-4704-2323-0; 978-1-4704-3707-7},
  mrclass = {35J10 (34L40 35B10 35B32 35P05 35Q41 37G40)},
  mrnumber = {3633291},
  mrreviewer = {Michael A. Perelmuter}
}

@article{fefferman.lee-thorp.ea2018,
  title = {Honeycomb {{Schr\"odinger}} Operators in the Strong Binding Regime},
  author = {Fefferman, Charles L. and Lee-Thorp, James P. and Weinstein, Michael I.},
  date = {2018},
  journaltitle = {Communications on Pure and Applied Mathematics},
  shortjournal = {Comm. Pure Appl. Math.},
  volume = {71},
  number = {6},
  pages = {1178--1270},
  issn = {0010-3640},
  doi = {10.1002/cpa.21735},
  url = {https://doi.org/10.1002/cpa.21735},
  fjournal = {Communications on Pure and Applied Mathematics},
  mrclass = {35J10 (81Q10 81V19)},
  mrnumber = {3794531},
  mrreviewer = {Michael J. Gruber}
}

@article{feppon.ammari2022,
  title = {Modal Decompositions and Point Scatterer Approximations near the {{Minnaert}} Resonance Frequencies},
  author = {Feppon, Florian and Ammari, Habib},
  date = {2022},
  journaltitle = {Studies in Applied Mathematics},
  shortjournal = {Stud. Appl. Math.},
  volume = {149},
  number = {1},
  pages = {164--229},
  issn = {0022-2526},
  fjournal = {Studies in Applied Mathematics},
  mrclass = {35Q35},
  mrnumber = {4472790}
}

@unpublished{feppon.ammari2022b,
  title = {Subwavelength Resonant Acoustic Scattering in Fast Time-Modulated Media},
  author = {Feppon, Florian and Ammari, Habib},
  date = {2022-05},
  url = {https://hal.archives-ouvertes.fr/hal-03659025},
  hal_id = {hal-03659025},
  hal_version = {v1},
  pdf = {https://hal.archives-ouvertes.fr/hal-03659025/file/Feppon\textsubscript{A}mmari\textsubscript{t}ime\textsubscript{m}odulated.pdf},
  keywords = {Acoustic scattering,high-contrast media,Sturm-Liouville eigenmodes,subwavelength resonances,time-modulated metamaterial}
}

@unpublished{feppon.cheng.ea2022,
  title = {Subwavelength Resonances in {{1D}} High-Contrast Acoustic Media},
  author = {Feppon, Florian and Cheng, Z and Ammari, Habib},
  date = {2022-06},
  url = {https://hal.archives-ouvertes.fr/hal-03697696},
  hal_id = {hal-03697696},
  hal_version = {v1},
  pdf = {https://hal.archives-ouvertes.fr/hal-03697696/file/resonances1D.pdf},
  keywords = {Acoustic waves,high-contrast,one-dimensional media,subwavelength resonances,transmission coefficient}
}

@book{kuchment1993Floquet,
  title = {Floquet Theory for Partial Differential Equations},
  author = {Kuchment, Peter},
  date = {1993},
  series = {Operator Theory: {{Advances}} and Applications},
  volume = {60},
  pages = {xiv+350},
  publisher = {{Birkh\"auser Verlag, Basel}},
  doi = {10.1007/978-3-0348-8573-7},
  url = {https://doi.org/10.1007/978-3-0348-8573-7},
  isbn = {3-7643-2901-7},
  mrclass = {35-02 (35C15 35P10 47N20)},
  mrnumber = {1232660},
  mrreviewer = {Yehuda Pinchover}
}

@article{lemoult.fink.ea2011,
  title = {Acoustic Resonators for Far-Field Control of Sound on a Subwavelength Scale},
  author = {Lemoult, Fabrice and Fink, Mathias and Lerosey, Geoffroy},
  date = {2011-08},
  journaltitle = {Physical Review Letters},
  shortjournal = {Phys. Rev. Lett.},
  volume = {107},
  number = {6},
  pages = {064301},
  publisher = {{American Physical Society}},
  doi = {10.1103/PhysRevLett.107.064301},
  url = {https://link.aps.org/doi/10.1103/PhysRevLett.107.064301},
  pagetotal = {5}
}

@article{lemoult.kaina.ea2016,
  title = {Soda Cans Metamaterial: {{A}} Subwavelength-Scaled Phononic Crystal},
  author = {Lemoult, Fabrice and Kaina, Nad\`ege and Fink, Mathias and Lerosey, Geoffroy},
  date = {2016},
  journaltitle = {Crystals},
  volume = {6},
  number = {7},
  issn = {2073-4352},
  doi = {10.3390/cryst6070082},
  url = {https://www.mdpi.com/2073-4352/6/7/82},
  abstract = {Photonic or phononic crystals and metamaterials, due to their very different typical spatial scales\textemdash wavelength and deep subwavelength\textemdash and underlying physical mechanisms\textemdash Bragg interferences or local resonances\textemdash, are often considered to be very different composite media. As such, while the former are commonly used to manipulate and control waves at the scale of the unit cell, i.e., wavelength, the latter are usually considered for their effective properties. Yet we have shown in the last few years that under some approximations, metamaterials can be used as photonic or phononic crystals, with the great advantage that they are much more compact. In this review, we will concentrate on metamaterials made out of soda cans, that is, Helmholtz resonators of deep subwavelength dimensions. We will first show that their properties can be understood, likewise phononic crystals, as resulting from interferences only, through multiple scattering effects and Fano interferences. Then, we will demonstrate that below the resonance frequency of its unit cell, a soda can metamaterial supports a band of subwavelength varying modes, which can be excited coherently using time reversal, in order to beat the diffraction limit from the far field. Above this frequency, the metamaterial supports a band gap, which we will use to demonstrate cavities and waveguides, very similar to those obtained in phononic crystals, albeit of deep subwavelength dimensions. We will finally show that multiple scattering can be taken advantage of in these metamaterials, by correctly structuring them. This allows to turn a metamaterial with a single negative effective property into a negative index metamaterial, which refracts waves negatively, hence acting as a superlens.},
  article-number = {82}
}

@article{lin.santosa2013Resonances,
  title = {Resonances of a Finite One-Dimensional Photonic Crystal with a Defect},
  author = {Lin, Junshan and Santosa, Fadil},
  date = {2013},
  journaltitle = {Siam Journal On Applied Mathematics},
  shortjournal = {SIAM J. Appl. Math.},
  volume = {73},
  number = {2},
  pages = {1002--1019},
  publisher = {{SIAM}}
}

@article{lin.zhang2022,
  title = {Mathematical Theory for Topological Photonic Materials in One Dimension},
  author = {Lin, Junshan and Zhang, Hai},
  date = {2022-12-08},
  journaltitle = {Journal of Physics A: Mathematical and Theoretical},
  shortjournal = {J. Phys. A: Math. Theor.},
  volume = {55},
  number = {49},
  pages = {495203},
  issn = {1751-8113, 1751-8121},
  doi = {10.1088/1751-8121/aca9a5},
  url = {https://iopscience.iop.org/article/10.1088/1751-8121/aca9a5},
  abstract = {Abstract                            This work presents a rigorous theory for topological photonic materials in one dimension. The main focus is on the existence of interface modes that are induced by topological properties of the bulk structure. For a general 1D photonic structure with time-reversal symmetry, we investigate the existence of an interface mode that is induced by a Dirac point upon perturbation. Specifically, we establish conditions on the perturbation which guarantee the opening of a band gap around the Dirac point and the existence of an interface mode. For a periodic photonic structure with both time-reversal and inversion symmetry, the Zak phase is quantized, taking only two values                                                                                                        0                   ,                   {$\pi$}                                                                . We show that the Zak phase is determined by the parity (even or odd) of the Bloch modes at the band edges. For a photonic structure consisting of two semi-infinite systems on the two sides of an interface with distinct topological indices, we show the existence of an interface mode inside the common gap. The stability of the mode under perturbations is also investigated. Finally, we study resonances for finite topological structures. Our results are based on the transfer matrix method and the oscillation theory for Sturm\textendash Liouville operators. The methods and results can be extended to general topological Sturm\textendash Liouville systems in one dimension.},
  file = {/scratch/users/silvioba/Zotero/storage/XYKJX7NR/Lin and Zhang - 2022 - Mathematical theory for topological photonic mater.pdf}
}

@article{lin2016perturbation,
  title = {A Perturbation Approach for near Bound-State Resonances of Photonic Crystal with Defect},
  author = {Lin, J.},
  date = {2016},
  journaltitle = {European Journal of Applied Mathematics},
  shortjournal = {European J. Appl. Math.},
  volume = {27},
  number = {1},
  pages = {66--86},
  publisher = {{Cambridge University Press}}
}

@article{lu.marzuola.ea2022Defect,
  title = {Defect Resonances of Truncated Crystal Structures},
  author = {Lu, Jianfeng and Marzuola, Jeremy L and Watson, Alexander B},
  date = {2022},
  journaltitle = {Siam Journal On Applied Mathematics},
  shortjournal = {SIAM J. Appl. Math.},
  volume = {82},
  number = {1},
  pages = {49--74},
  publisher = {{SIAM}}
}

@article{thomas.hughes2004,
  title = {Enhanced Optical Transmission through a Subwavelength {{1D}} Aperture},
  author = {Thomas, D.A. and Hughes, H.P.},
  date = {2004-02},
  journaltitle = {Solid State Communications},
  shortjournal = {Solid State Communications},
  volume = {129},
  number = {8},
  pages = {519--524},
  issn = {00381098},
  doi = {10.1016/j.ssc.2003.11.029},
  url = {https://linkinghub.elsevier.com/retrieve/pii/S0038109803010214},
  langid = {english}
}

@article{wang2010,
  title = {Acoustical Mechanism for the Extraordinary Sound Transmission through Subwavelength Apertures},
  author = {Wang, Xinlong},
  date = {2010-03-29},
  journaltitle = {Applied Physics Letters},
  shortjournal = {Appl. Phys. Lett.},
  volume = {96},
  number = {13},
  pages = {134104},
  issn = {0003-6951, 1077-3118},
  doi = {10.1063/1.3378268},
  url = {http://aip.scitation.org/doi/10.1063/1.3378268},
  langid = {english}
}

@article{yves.fleury.ea2017,
  title = {Crystalline Metamaterials for Topological Properties at Subwavelength Scales},
  author = {Yves, Simon and Fleury, Romain and Berthelot, Thomas and Fink, Mathias and Lemoult, Fabrice and Lerosey, Geoffroy},
  date = {2017-07-18},
  journaltitle = {Nature Communications},
  shortjournal = {Nat Commun},
  volume = {8},
  number = {1},
  pages = {16023},
  issn = {2041-1723},
  doi = {10.1038/ncomms16023},
  url = {https://www.nature.com/articles/ncomms16023},
  urldate = {2023-01-06},
  abstract = {Abstract             The exciting discovery of topological condensed matter systems has lately triggered a search for their photonic analogues, motivated by the possibility of robust backscattering-immune light transport. However, topological photonic phases have so far only been observed in photonic crystals and waveguide arrays, which are inherently physically wavelength scaled, hindering their application in compact subwavelength systems. In this letter, we tackle this problem by patterning the deep subwavelength resonant elements of metamaterials onto specific lattices, and create crystalline metamaterials that can develop complex nonlocal properties due to multiple scattering, despite their very subwavelength spatial scale that usually implies to disregard their structure. These spatially dispersive systems can support subwavelength topological phases, as we demonstrate at microwaves by direct field mapping. Our approach gives a straightforward tabletop platform for the study of photonic topological phases, and allows to envision applications benefiting the compactness of metamaterials and the amazing potential of topological insulators.},
  langid = {english},
  file = {/scratch/users/silvioba/Zotero/storage/7GXT95RN/Yves et al. - 2017 - Crystalline metamaterials for topological properti.pdf}
}

@article{zworski2017,
  title = {Mathematical Study of Scattering Resonances},
  author = {Zworski, Maciej},
  date = {2017},
  journaltitle = {Bulletin of Mathematical Sciences},
  shortjournal = {Bull. Math. Sci.},
  volume = {7},
  number = {1},
  pages = {1--85},
  issn = {1664-3607},
  doi = {10.1007/s13373-017-0099-4},
  url = {https://doi.org/10.1007/s13373-017-0099-4},
  fjournal = {Bulletin of Mathematical Sciences},
  mrclass = {81U10 (35P25 47A40 58J50 74J20 78A45)},
  mrnumber = {3625851},
  mrreviewer = {Jens Wirth}
}

\end{document}
 
%------------------------------------------------------------------------------
% End of journal.tex
%------------------------------------------------------------------------------